\documentclass{article}
\usepackage{amsmath}
\usepackage{amsfonts}
\usepackage{amssymb}
\usepackage{amsthm}
\numberwithin{equation}{section}
\usepackage{float}
\usepackage[utf8]{inputenc}
\usepackage[english]{babel} 
\usepackage[T1]{fontenc}
\usepackage{empheq}
\usepackage{ifthen}
\usepackage{intcalc}
\usepackage{hyperref}
\usepackage{subcaption}     
\usepackage{multirow}
\usepackage[table]{xcolor}
\usepackage{booktabs}
\usepackage{authblk}

\usepackage{tikz}
\usepackage{smartdiagram}
\usetikzlibrary{arrows,shapes,positioning,shadows,trees,chains}

\usepackage{algpseudocode}
\usepackage[linesnumbered,boxed, ruled]{algorithm2e}
\SetKwRepeat{Do}{do}{while}%
\newtheorem{theorem}{Theorem}
\newtheorem{lemma}{Lemma}
\newtheorem{definition}{Definition}
\newtheorem{proposition}{Proposition}
\newtheorem{corollary}{Corollary}
\newtheorem{remark}{Remark}
\newtheorem{assumption}{Assumption}

\numberwithin{theorem}{section}
\numberwithin{lemma}{section}
\numberwithin{remark}{section}
\numberwithin{definition}{section}
\numberwithin{proposition}{section}
\numberwithin{corollary}{section}
\numberwithin{assumption}{section}

\newcommand{\ds}{\displaystyle}
\newcommand{\pa}{\partial}
\def\eps{{\varepsilon}}
\newcommand{\R}{\mathbb{R}}
\newcommand{\dive}{{\rm div}\,} 
 
\newcommand{\grad}{{\bf grad}\,}

\def\avec{{\bf a}}

\def\nvec{{\bf n}}
\def\pvec{{\bf p}}
\def\qvec{{\bf q}}

\def\xvec{{\bf x}}
\def\yvec{{\bf y}}
\def\zvec{{\bf z}}
\def\Hvec{{\bf H}}

\def\PQvec{{\bf \tilde{P}Q}}
\def\PQvecmg{{\underline {\underline {\bf \tilde{P}Q}}}}
\def\Lvec{{\bf L}}
\def\Qvec{{\bf Q}}

\def\Xcal{{\cal X}}

\def\Ical{{\bf \mathcal I}}

\def\Pcal{{\bf \mathcal P}}

\def\Tcal{{\bf \mathcal T}}

\def\uelt{\mathtt{u}}
\def\welt{\mathtt{w}}
\def\Wens{\mathtt{W}}

\def\Nhat{{\widehat N}}
\def\Hhat{{\widehat \H}}

\def\H{{\mathbb H}}
\def\T{{\mathbb T}}
\def\D{{\mathbb D}}
\def\Sd{{\mathbb S}}
\newcommand{\N}{\mathbb{N}}
\def\Vudud{{\underline{\underline V}}}
\def\Wudud{{\underline{\underline W}}}
\def\Hudud{{\underline {\underline H}}}
\def\Ludud{{\underline {\underline L}}}
\def\Mudud{{\underline {\underline M}}}
\def\Lududvec{{\bf {\underline {\underline L}}}}

\def\Qududvec{{\bf {\underline  {\underline Q} } }}
\def\Vudud{{\underline {\underline V}}}

\def\Dfrak{{\mathfrak D}}
\def\Qududvec{{\bf {\underline  {\underline Q} } }}
\def\Xcalmg{{\underline{\underline{\cal X}}}}
\def\Pudud{\underline{\underline{{\mathbb P}}}}
\def\Lambdaudud{{\underline {\underline \Lambda}}}

\def\Tcoer{\mathtt{T}}

\tikzset{
  basic/.style  = {draw, text width=2cm, drop shadow, font=\sffamily, rectangle},
  root/.style   = {basic, rounded corners=2pt, thin, align=center,
                   fill=orange!50!red},
  level 2/.style = {basic, rounded corners=6pt, thin,align=center, fill=red!50,
                   text width=8em},
  level 3/.style = {basic, thin, align=left, fill=yellow!100, text width=6.5em}
}
 \tikzstyle{decision} = [diamond, draw, fill=yellow!50!orange, 
     text width=5em, text badly centered, node distance=2cm, inner sep=0pt,minimum height=1em]
   \tikzstyle{decision2} = [diamond, aspect=2, draw, fill=yellow!50!orange, 
     text width=8em, text badly centered, node distance=2cm, inner sep=0pt,minimum height=1em]  
 \tikzstyle{block} = [rectangle, draw, fill=blue!50, 
     text width=7em, text centered, rounded corners, minimum height=2em]
 \tikzstyle{line} = [draw, -latex']
 \tikzstyle{cloud} = [draw, ellipse,fill=pink!50,text width=5em,text badly centered, node distance=3cm,
     minimum height=2em]
 \tikzstyle{cloud2} = [draw, ellipse,fill=white, node distance=3cm,
 minimum height=2em]    

 \tikzstyle{cloud3} = [draw, circle, node distance=0.1cm,
 minimum height=0.001em]   
  \tikzstyle{block2} = [rectangle, draw, fill=green!50, 
  text width=12em, text centered, rounded corners, minimum height=2em]
   \tikzstyle{block3} = [rectangle, draw, fill=red!50, 
   text width=2.5em, text centered, rounded corners, minimum height=2em]
\tikzstyle{block4} = [rectangle, draw, fill=blue!50, 
     text width=13em, text badly centered, rounded corners, minimum height=2em]
\tikzstyle{block5} = [rectangle, draw, fill=blue!50, 
     text width=7em, text badly centered, rounded corners, minimum height=2em]
\tikzstyle{block6} = [rectangle, draw, fill=cyan!50, 
text width=7em, text badly centered, rounded corners, minimum height=2em]

\begin{document}
\title{A posteriori error estimates for mixed finite element discretization of the multigroup Neutron Simplified Transport equations with Robin boundary condition}
\author[1]{Patrick Ciarlet}
\author[2]{Minh Hieu Do}
\author[2]{Mario Gervais}
\author[2]{Fran\c{c}ois Madiot}
\affil[1]{POEMS, CNRS, INRIA, ENSTA, Institut Polytechnique de Paris, 91120 Palaiseau, France.}
\affil[2]{Universit\'e Paris-Saclay, CEA, Service d'\'Etudes des R\'eacteurs et de Math\'ematiques Appliqu\'ees, 91191, Gif-sur-Yvette, France.}

\maketitle

\begin{abstract}
 We analyse \emph{a posteriori} error estimates for the discretization with mixed finite elements on simplicial or Cartesian meshes of the multigroup neutron simplified transport (SP$_N$) equations, in the case where a Robin (or Fourier type) boundary condition is imposed on the boundary. This boundary condition is of particular importance in neutronics, since it corresponds to the well-known vacuum boundary condition. We provide guaranteed and locally efficient estimators. 
In particular, a specific estimator is designed to handle the Robin boundary condition. We also develop the theory in the case of mixed imposed boundary conditions, of Dirichlet, Neumann or Fourier type. 
The approach is further extended to a Domain Decomposition Method, the so-called DD+$L^2$ jumps method. In this framework, the adaptive mesh refinement strategy is implemented for a discretization using Cartesian meshes on each subdomain. Numerical experiments illustrate the theory. 
\end{abstract}
\section*{Introduction}
In neutronics, one is interested in modelling the neutron density inside a reactor core. 
The neutron flux density in the reactor core is determined by solving the transport equation which depends on seven variables: space (3), direction (2), energy or modulus of the velocity (1), and  time (1). 
Due to the high dimensionality of the problem, the numerical resolution of this equation faces some challenges in terms of computational cost. In practice, the neutron flux density can be modeled by the simplified transport equations~\cite{Gelb60} at the reactor core scale. 
The $SP_N$ equations stems from a modelling approximation of the transport equation.
Consequently, the $SP_N$ equations do not converge to transport equation. Nevertheless, they are commonly used by physicists since their resolution is cheap in terms of computational cost. The order $N$ is odd, and the number of $SP_N$ odd (resp. even) moments is $\Nhat:=\frac{N+1}{2}$. 
The energy variable is commonly discretized using the multigroup theory {\cite{DuHa76,marchuk1986numerical}}. In this method, the entire range of neutron energies is divided into $G$ intervals, called energy groups. In each energy group, the neutron flux density is lumped and all parameters are averaged. 

This model has the same structure as a multigroup neutron diffusion equations~\cite{JM21}. 
The numerical analysis of the multigroup SP$_N$ equations with a source term, discretized with mixed finite elements, may be found in~\cite{Giret2018,Gerv26}.
The analysis included in particular the case of low-regularity solutions. {\em A priori estimates} were derived in the process. A natural question is then the {\em a posteriori} analysis of the method, to further optimize the cost of the numerical method. {This question has been addressed for the neutron diffusion equations, with vanishing Dirichlet boundary condition~\cite{CDM23,CDM25}.
The main topic of this paper is to extend this approach to another model, the multigroup SP$_N$ equations, supplemented by another boundary condition, namely the Robin boundary condition.}

\par
{\em A posteriori} analysis for mixed finite elements has been extensively studied,  see~\cite{carstensen1997posteriori,larson2008posteriori,lovadina2006energy,Vohralik2010} and references therein for the Poisson equation,~{\cite{wohlmuth1999comparison,wheeler2005posteriori} for the diffusion-reaction equation} (one-group neutron diffusion equation), and ~\cite{Vohralik2007} for the convection-diffusion-reaction equation. {In~\cite{CDM23}, the first two authors and the last author proposed {\em a posteriori} estimators for the one-group neutron diffusion equation that are both reliable and locally efficient with respect to two norms to measure the errors. This study was performed in the setting of a Dirichlet boundary condition imposed on the boundary. The approach is generalized to the multigroup neutron diffusion equations in~\cite{CDM25}, see references therein. Extending this approach to the case of the Robin boundary condition is of particular importance for the neutron simplified transport model since it corresponds to the well-known vacuum boundary condition. A posteriori analysis for mixed finite elements for the Poisson problem with Robin boundary condition has been studied in~\cite{KSS11,L24}. In~\cite{L24}, the author provides estimators that are locally efficient. These estimators are however not reliable, in the sense that the upper bound on the error between the exact and approximate solutions is not fully computable: it depends on a generic constant which is independent of the mesh size.} Let us also mention some works on a posteriori estimates for the mortar mixed finite element method~\cite{wohlmuth1999comparison,wheeler2005posteriori,PVW2013}.

\par
Nuclear reactor cores often have a Cartesian geometry. Indeed, in the models, the base brick, which is called a cell, is a rectangular cuboid of $\R^3$. The global layout is a set of cells that are distributed on a 3D grid, so that the global domain of the reactor core can be represented by a rectangular cuboid of $\R^3$.  Each cell is made of fuel, absorbing or reflector material. To account for the different materials, the coefficients in the models are  {\em piecewise polynomials} (possibly piecewise constant) with respect to the position, ie. their restriction to each cell is a polynomial \cite{DuHa76,JaBL12,JaCi13}. In practice the coefficients characterizing the materials may differ from one cell to another by a factor of order $10$ or more.

\par
The outline of the manuscript is as follows. \\
In Sections~\ref{sec-notations} and~\ref{sec-model}, we introduce some notations and our model problem. 
Then in Section~\ref{sec-plain}, we recall how it can be solved in a mixed setting. To that aim we build the  standard equivalent variational formulation, and provide the existing {\em a priori} numerical analysis results that allow one to compare the discrete solution to the exact one. For the discretization, we choose the well-known Raviart-Thomas-N\'ed\'elec finite element RTN${}_k$, where $k\ge0$ denotes the order. In Section~\ref{section:error-estimator}, we propose the {\em a posteriori} analysis of the model. We build a reconstruction of the solution via an averaging, or a post-processing technique. We also investigate how the {\em a posteriori analysis} can be extended to a multi-domain reformulation (the so called DD+$L^2$-jumps method~\cite{CiJK17}) of our model problem. In Section~\ref{sec:numerical_results}, we illustrate numerically the theoretical results. 

\section{Notations}\label{sec-notations}
We choose the same notations as in \cite{CDM23}.
Throughout the paper, $C$ is used to denote a generic positive constant which is independent of the mesh size, the mesh and the quantities/fields of interest. We also use the shorthand notation $A\lesssim B$ for the inequality $A\leq C B$, where $A$ and $B$ are two scalar quantities, and $C$ is a generic constant. \\ 
Vector-valued (resp. tensor-valued) function spaces are written in boldface character (resp. blackboard characters)\,; for the latter, the index \textit{sym} indicates symmetric fields. Given an open set ${\mathcal O}\subset\R^{{d}}$, ${d}=1,2,3$, we use the notation $(\cdot,\cdot)_{0,{\mathcal O}}$ (respectively $\|\cdot\|_{0,{\mathcal O}}$) for the $L^2({\mathcal O})$ and $\Lvec^2({\mathcal O})=(L^2({\mathcal O}))^{{d}}$ scalar products (resp. norms). More generally, $(\cdot,\cdot)_{s,{\mathcal O}}$ and $\|\cdot\|_{s,{\mathcal O}}$ (respectively $|\cdot|_{s,{\mathcal O}}$) denote the scalar product and norm (resp. semi-norm) of the Sobolev spaces $H^s({\mathcal O})$ and $\Hvec^s({\mathcal O})=(H^s({\mathcal O}))^{{d}}$ for $s\in\R$ (resp. for $s>0$). \\ 
If moreover the boundary $\pa {\mathcal O}$ is Lipschitz, $\nvec$ denotes the unit outward normal vector field to $\pa {\mathcal O}$. Finally, it is assumed that the reader is familiar with vector-valued function spaces related to the diffusion equation, such as $\Hvec(\dive;{\mathcal O})$, $\Hvec_0(\dive;{\mathcal O})$ etc. \\
Let $G\in\N\setminus\{0,1\}$
and $\Nhat\in\N\setminus\{0,1\}$. Given a function space $W$, we denote by $\Wudud$ the product space $W^{\Nhat\times G}$.
We extend the notation $(\cdot,\cdot)_{0,{\mathcal O}}$ (respectively $\|\cdot\|_{0,{\mathcal O}}$) to the $\underline{\underline{\Lvec^2}}({\mathcal O})$ and $\underline{\underline{\Lvec^2}}({\mathcal O})$ inner products (resp. norms).\\
Specifically, we let $\Omega$ be a bounded, connected and open subset of $\R^d$ {for $d=2,3$}, having a Lipschitz boundary which is piecewise smooth. We split $\Omega$ into $N$ open, connected, disjoint parts $\{\Omega_i\}_{1\le i\le N}$ with Lipschitz, piecewise smooth boundaries: $\overline{\Omega}=\cup_{1\le i\le N}\overline{\Omega_i}$ and the set $\{\Omega_i\}_{1\le i\le N}$ is called a partition of $\Omega$. For a field $v$ defined over $\Omega$, we shall use the notations $v_i=v_{|\Omega_i}$, for $1\le i\le N$.\\
Given a partition $\{\Omega_i\}_{1\le i\le N}$ of $\Omega$, we introduce a function space with piecewise regular elements:
\[
\begin{array}{rcl}
\Pcal W^{1,\infty}(\Omega)&=&\left\{D\in L^{\infty}(\Omega)\,|\,D_i\in W^{1,\infty}(\Omega_i),\,1\le i\le N\right\}.
\end{array}
\]
To measure $\psi\in\Pcal W^{1,\infty}(\Omega)$, we use the natural norm $$\|\psi\|_{\Pcal W^{1,\infty}(\Omega)} =\max_{i=1,N}\|\psi_i\|_{W^{1,\infty}(\Omega_i)}.$$

\section{The model}\label{sec-model}
We introduce the model such as defined in~\cite{BaLa11}. Let us set $\Ical_G:=\{1,\cdots,G\}$, the set of energy group indices. We denote by $\Ical_e$ (resp. $\Ical_o$) the subset of even (resp. odd) integers of the integer set $\{0,\cdots, N \}$.
Given a source term $S_f\in \Ludud^2(\Omega)$, we consider the following neutron SP$_N$ equations, with vacuum boundary condition. In its primal form, it is written:
\begin{equation}\label{eq:diff_primal}
\left\{\begin{array}{l}
\mbox{Find $\phi\in \Hudud^1(\Omega)$ such that}\cr
-\dive (\Dfrak\,\grad\phi )+ \T_e\,\phi= S_f\mbox{ in }\Omega,\cr
(\Dfrak\,\grad\phi )\cdot \nvec + \Gamma_e \phi = 0 \mbox{ in }\partial\Omega.
\end{array}\right.
\end{equation}
where $\phi$ and $S_f$ denote respectively the neutron flux and the fission source, and $\Dfrak=\H^T\,\T_o^{-1}\,\H$.
Let $\delta_{\cdot,\cdot}$ be the Kronecker symbol. The matrices $\H$, $\T_e$, $\T_o, \\Gamma_e\in\left(\R^{\Nhat\times\Nhat}\right)^{G\times G}$
 are such that $\forall (g,g')\in\Ical_G\times\Ical_G$ :
\begin{itemize}
\item $(\H)_{g,g'}=\delta_{g,g'}\Hhat\in\R^{\Nhat\times\Nhat}$, with $\forall(i,j)\in\{1,\cdots,\Nhat\}^2$, $\Hhat_{i,j}=\delta_{i,j}+\delta_{i,j-1}$.
\item $(\T_e)_{g,g}:=\T_e^g\in\R^{\Nhat\times\Nhat}$ denotes the even removal matrix, such that:
\[
\T_e^g=diag\,
\left(t_{m}(\Sigma_t^g-\Sigma_{s,m}^{g\to g})\right)_{m\in\Ical_e},
\]
\item[]$(\T_o)_{g,g}:=\T_o^g\in\R^{\Nhat\times\Nhat}$ denotes the odd removal matrix, such that: 
\[
\T_o^g=diag\,
\left(t_{m}(\Sigma_t^g-\Sigma_{s,m}^{g\to g})\right)_{m\in\Ical_o},
\] 
\item[]
where $\forall m\in\Ical_{e,o}$, 
 $\forall m\geq 0$, $t_m=\frac{(\alpha_m)^2}{2m+1}>0$ with $\alpha_0=1$ and $\alpha_{m+1}=\frac{4(m+1)^2-1}{(m+1)\alpha_m}$.
\item[]
The coefficient $\Sigma_t^g$ is the macroscopic total cross section of energy group $g$, and the coefficient
 $\Sigma_{s,m}^{g\to g}$ denotes the Legendre moment of order $m$ of the macroscopic self scattering cross sections, from energy group $g$ to itself.
\item For $g'\neq g$:
\item[]$(\T_e)_{g,g'}:=-\Sd_e^{g'\rightarrow g}\in\R^{\Nhat\times\Nhat}$ denotes the even scattering matrix, such that: 
\[
\Sd_e^{g'\rightarrow g}=diag\,
\left(t_{m}\Sigma_{s,m}^{g'\rightarrow g}\right)_{m\in\Ical_e},
\]
\item[]$(\T_o)_{g,g'}:=-\Sd_o^{g'\rightarrow g}\in\R^{\Nhat\times\Nhat}$ denotes the odd scattering matrix, such that: 
\[
\Sd_o^{g'\rightarrow g}=diag\,
\left(t_{m}\Sigma_{s,m}^{g'\rightarrow g}\right)_{m\in\Ical_o},
\]
\item[]
where $\Sigma_{s,m}^{g'\rightarrow g}$ is the Legendre moment of order $m$ the macroscopic scattering cross sections from energy group $g'$ to energy group $g$.
\item $(\Gamma_e)_{g,g'}=\delta_{g,g'}\hat{\Gamma}_e\in\R^{\Nhat\times\Nhat}$, is a symmetric positive definite matrix where $\hat{\Gamma}_e$ is defined by $\forall(i,j)\in\{1,\cdots,\Nhat\}^2$, $$(\hat{\Gamma}_e)_{i,j}=\alpha_{2(i-1)}\alpha_{2(j-1)}(xP_{2(i-1)}(x),P_{2(j-1)}(x))_{0,(0,1)},$$ with $P_m$ the m\textsuperscript{th} Legendre polynomial.
\end{itemize}
The coefficients of the matrices $\T_{e,o}$ are supposed to be such that:
\begin{equation}\label{Pos-SPN}
\left\{
\begin{array}{ll}
(0)&
\forall\,g,\,g'\in\Ical_G,\forall\,m\in\Ical_{e,o}:\\
&\quad (\Sigma_{r,m}^g,\Sigma_{s,m}^{g'\to g})\in\Pcal W^{1,\infty}(\Omega)\times L^\infty(\Omega).\\
(i)&
\exists\,(\Sigma_{r,(e,o)})_*,\,(\Sigma_{r,(e,o)})^*>0\,|\,\forall\,g\in\Ical_G,\,\forall\,m\in\Ical_{e,o}:\\
&(\Sigma_{r,(e,o)})_*\le t_m\Sigma_{r,m}^g\le (\Sigma_{r,(e,o)})^*\mbox{ a.e. in }\Omega.\\
(ii)&
\exists\,0<\eps<\ds\frac{1}{G-1} \,|\,\forall\,m\in\Ical_{e,o},\,\forall\,g,g'\in\Ical_G, g'\neq g, 
\\&
\quad |\Sigma_{s,m}^{g\rightarrow g'}|\leq\eps \Sigma_{r,m}^g\mbox{ a.e. in }\Omega,\\
 (iii) & 
 \exists  (\T_o^{-1})^*,\,(\T_o^{-1})_*,\,(\T_e)^*,\,(\T_e)_*>0, \mbox{ such that }  \forall X \in \underline{\underline{\R}},\mbox{ a.e. in }\Omega:\\
 &\quad  \left\{\begin{array}{ll}
(\T_o^{-1})_*\|X\|^2\leq X^T\T_o^{-1}X, &\|\T_o^{-1}X\| \le (\T_o^{-1})^*\|X\|, \cr 
(\T_e)_*\|X\|^2 \le X^T\T_eX,                   & \|\T_eX\| \le (\T_e)^*\|X\|,
\end{array}\right.
\end{array}
\right.
\end{equation}
where  $\forall\,g \in\Ical_G,\forall\,m\in\Ical_{e,o}, \Sigma_{r,m}^g:=\Sigma_t^g-\Sigma_{s,m}^{g\to g}$.

We refer to~\cite[Section 1.5.3]{Giret2018} for the formulation of a set of necessary conditions under which \eqref{Pos-SPN}-(iii) holds true.
Hypothesis \ref{Pos-SPN}$-(ii)$ is valid while modelling the core of a pressurized water reactor:  the scattering cross-sections are weaker than the removal cross-sections of an order $0<\eps<<1$. 
Thus, the matrices $\T_{e,o}$ are strictly diagonally dominant matrices: in particular, they are  invertible, and so $\Dfrak$ is well-defined. \\
Starting from the assumption (\ref{Pos-SPN})(iii), one can prove easily that there exists $(\T_e^{-1})_*,\,(\T_e^{-1})^*,\,(\T_o)_*,\,(\T_o)^*>0$ such that for all $X \in \underline{\underline{\R}}$, almost everywhere in $\Omega$,
	\begin{equation}\label{eq:pos_SPN_mixed}
\left\{\begin{array}{ll}
	(\T_e^{-1})_*\|X\|^2 \leq X^T\T_e^{-1}X,  & \|\T_e^{-1}X\|\leq (\T_e^{-1})^*\|X\|,  \cr
	(\T_o)_*\|X\|^2 \leq  X^T\T_oX,                 & {\|\T_oX\|\leq (\T_o)^*\|X\|}.
\end{array}\right.
\end{equation}
Classically, Problem \eqref{eq:diff_primal} is equivalent to the following variational formulation:
\begin{equation}\label{eq:FV_diff_primal}
\left\{\begin{array}{l}
\mbox{Find $\phi\in \Hudud^1(\Omega)$ such that }\forall \psi\in \Hudud^1(\Omega),\cr
(\Dfrak\,\grad\phi,\grad\psi)_{0,\Omega} + (\T_e\phi,\psi)_{0,\Omega}{+ (\Gamma_e\phi,\psi)_{0,\pa\Omega}} = (S_f,\psi)_{0,\Omega}.
\end{array}\right.
\end{equation}
\noindent Under the assumptions \eqref{Pos-SPN} on the coefficients, the primal problem \eqref{eq:diff_primal} is well-posed, in the sense that for all $S_f\in \Ludud^2(\Omega)$, there exists one and only one solution $\phi\in {\Hudud^1(\Omega)}$ that solves \eqref{eq:diff_primal}, with the bound $\|\phi\|_{1,\Omega}\lesssim\,\|S_f\|_{0,\Omega}$. Provided that the coefficient $\Dfrak$ is piecewise smooth, the solution { has extra smoothness} (see eg. Proposition 1 in \cite{CiJK17}). 
Throughout the paper, we add remarks on the extension in the situation where $\T_e\geq 0$ may vanish. In particular, the {\em a posteriori} analysis we propose covers both the pure diffusion case, and the diffusion-reaction case. \\
For simplicity, we prescribe a Robin boundary condition everywhere on $\partial\Omega$. However, instead of imposing only a Robin boundary condition, one can consider mixed boundary conditions on $\pa\Omega$, in which case analyses can also be carried out theoretically and numerically. Results are detailed in Appendix~\ref{sec:appendix_mixed}.

\section{ Variational formulation and discretization}\label{sec-plain}
Let us introduce the function spaces:
\[\begin{array}{rcl}
\Qvec(\Omega) &=& \left\{\, \qvec\in\Hvec(\dive,\Omega) \, | (\qvec\cdot\nvec)_{|_{\pa\Omega}}\in L^2(\pa\Omega) \right\},\cr\\
&  &\|\qvec\|_{\Qvec(\Omega)}=\left(\|\qvec\|_{\Hvec(\dive,\Omega)}^2\,+\,
\|\qvec\cdot\nvec\|_{0,\pa \Omega}^2\right)^{1/2};\cr
\Xcal &=& \left\{\,(\qvec,\psi)\in\Qvec(\Omega)\times L^2(\Omega)\right\}\,,\ \|(\qvec,\psi)\|_{\Xcal}=\left(\|\qvec\|_{\Qvec(\Omega)}^2\,+\,\|\psi\|_{0,\Omega}^2\right)^{1/2}\,.
\end{array}\]
We also use the notations: $\zeta=(\pvec,\phi)$ and $\xi=(\qvec,\psi)$.
\subsection{Mixed variational formulation}\label{ss-sec:VF-PC}
The solution $\phi$ to \eqref{eq:diff_primal} belongs to ${\Hudud^1(\Omega)}$, so if one lets ${\pvec=-\T_o^{-1}\H\,\grad\phi}\in\Lududvec^2(\Omega)$, the neutron multigroup SP$_N$ problem may {also} be written as:
\begin{equation}\label{eq:diff-mixed}
\left\{\begin{array}{l}
\mbox{Find $(\pvec,\phi)\in{\Qududvec(\Omega)}\times {\Hudud^1(\Omega)}$ such that}\cr
\T_o\,\pvec\,+\,\H\grad\phi=0\mbox{ in }\Omega,\cr
\H^T\dive\pvec\,+\,\T_e\phi=S_{f}\mbox{ in }\Omega,\cr
{-\H^T\pvec\cdot \nvec + \Gamma_e\phi=0\mbox{ on }\pa\Omega.}
\end{array}\right.
\end{equation}
Solving the mixed problem \eqref{eq:diff-mixed} is equivalent to solving \eqref{eq:diff_primal}.
\begin{proposition}\label{pro:thm0-PC}
	{Let $\T_{o}$ and $\T_{e}$  satisfy \eqref{Pos-SPN}.} The solution $(\pvec,\phi)\in{\Qududvec(\Omega)}\times {\Hudud^1(\Omega)}$ to \eqref{eq:diff-mixed} is such that $\phi$ is a solution to \eqref{eq:diff_primal} with the same data. Conversely, the solution $\phi\in \Hudud^1(\Omega)$ to \eqref{eq:diff_primal} is such that $(-{\T_o^{-1}\H\,\grad\phi},\phi)\in{\Qududvec(\Omega)}\times {\Hudud^1(\Omega)}$ is a solution to \eqref{eq:diff-mixed} with the same data.
\end{proposition}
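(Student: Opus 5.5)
The plan is to prove each direction by direct substitution, using that $\T_o$ is invertible (it is strictly diagonally dominant by \eqref{Pos-SPN}-(ii), and \eqref{eq:pos_SPN_mixed} applies), together with the definition $\Dfrak=\H^T\T_o^{-1}\H$.

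\emph{Direct implication.} Let $(\pvec,\phi)\in\Qududvec(\Omega)\times\Hudud^1(\Omega)$ solve \eqref{eq:diff-mixed}.
\begin{itemize}
\item From the first equation, $\pvec=-\T_o^{-1}\H\grad\phi$ a.e. Hence $\H^T\pvec=-\Dfrak\grad\phi$.
\item Since $\pvec\in\Qududvec(\Omega)$ and $\H^T$ is a constant matrix, $\H^T\pvec$ lies in $\Hvec(\dive)$ with $\dive(\H^T\pvec)=\H^T\dive\pvec$.
\item The second equation then reads $-\dive(\Dfrak\grad\phi)+\T_e\phi=S_f$ in $L^2(\Omega)$.
\item The normal trace of $\H^T\pvec$ is $\H^T(\pvec\cdot\nvec)$, which is in $L^2(\pa\Omega)$. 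The boundary equation therefore gives $(\Dfrak\grad\phi)\cdot\nvec+\Gamma_e\phi=0$ on $\pa\Omega$.
\end{itemize}
So $\phi$ solves \eqref{eq:diff_primal}. To be safe, I will also recover the variational form \eqref{eq:FV_diff_primal}. Multiply the second equation by $\psi\in\Hudud^1(\Omega)$ and apply the Green formula in $\Hvec(\dive)\times H^1$. The boundary term $-\langle \H^T\pvec\cdot\nvec,\psi\rangle$ becomes $(\Gamma_e\phi,\psi)_{0,\pa\Omega}$, and the pairing is a genuine $L^2(\pa\Omega)$ integral because the normal trace is in $L^2$.

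\emph{Converse implication.} Let $\phi\in\Hudud^1(\Omega)$ solve \eqref{eq:diff_primal}, equivalently \eqref{eq:FV_diff_primal}, and set $\pvec:=-\T_o^{-1}\H\grad\phi$.
\begin{itemize}
\item By \eqref{eq:pos_SPN_mixed}, $\T_o^{-1}$ is bounded, so $\pvec\in\Lududvec^2(\Omega)$. The first equation of \eqref{eq:diff-mixed} holds by construction.
\item The main obstacle is showing $\pvec\in\Qududvec(\Omega)$, i.e. $\dive\pvec\in L^2$ and $\pvec\cdot\nvec\in L^2(\pa\Omega)$ componentwise. The equation itself only controls the combination $\H^T\pvec=-\Dfrak\grad\phi$.
\item The key observation is that $\Hhat$ is upper bidiagonal with unit diagonal, hence invertible, and so is $\H$ (and $\H^T$). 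Thus $\pvec=-(\H^T)^{-1}\Dfrak\grad\phi$, and divergence and normal trace commute with the constant matrix $(\H^T)^{-1}$.
\item Testing \eqref{eq:FV_diff_primal} with $\psi\in\underline{\underline{\mathcal D}}(\Omega)$ gives $\dive(\Dfrak\grad\phi)=\T_e\phi-S_f\in\Ludud^2(\Omega)$. Hence $\dive\pvec=(\H^T)^{-1}(S_f-\T_e\phi)\in\Ludud^2(\Omega)$, which is the second equation of \eqref{eq:diff-mixed}.
\item Next, take general $\psi\in\Hudud^1(\Omega)$ and compare \eqref{eq:FV_diff_primal} with the Green formula. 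This yields $\langle(\Dfrak\grad\phi)\cdot\nvec+\Gamma_e\phi,\psi\rangle_{\pa\Omega}=0$ for all traces $\psi\in H^{1/2}(\pa\Omega)$. So $(\Dfrak\grad\phi)\cdot\nvec=-\Gamma_e\phi_{|\pa\Omega}$ in $H^{-1/2}$.
\item The right-hand side lies in $L^2(\pa\Omega)$, since $\phi_{|\pa\Omega}\in H^{1/2}$. Therefore $\pvec\cdot\nvec=-(\H^T)^{-1}(\Dfrak\grad\phi)\cdot\nvec\in L^2(\pa\Omega)$, giving $\pvec\in\Qududvec(\Omega)$ and the Robin relation $-\H^T\pvec\cdot\nvec+\Gamma_e\phi=0$.
\end{itemize}

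Apart from the $L^2$ regularity of the normal trace, everything is routine bookkeeping.
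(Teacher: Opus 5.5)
Your proof is correct. The paper states this proposition without proof, treating it as classical, and your argument is the standard one it relies on: direct substitution via $\Dfrak=\H^T\T_o^{-1}\H$ for the first implication. For the converse, you use the invertibility of the constant matrix $\H$ (block diagonal with unit upper-bidiagonal blocks $\Hhat$) to transfer to every component of $\pvec$ both the $\Hvec(\dive)$ regularity of $\Dfrak\grad\phi=-\H^T\pvec$ and its normal trace $-\Gamma_e\phi_{|\pa\Omega}\in L^2(\pa\Omega)$, obtained from Green's formula in $H^{-1/2}(\pa\Omega)$.

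The only blemish is a sign slip in your optional recovery of \eqref{eq:FV_diff_primal}. Green's formula yields $+\langle\H^T\pvec\cdot\nvec,\psi\rangle_{\pa\Omega}$, which the Robin condition turns into $+(\Gamma_e\phi,\psi)_{0,\pa\Omega}$, so the minus sign you wrote in front of that pairing should be dropped.
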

To obtain the variational formulation for the mixed problem \eqref{eq:diff-mixed}, let $\qvec\in{\Qududvec(\Omega)}$ and $\psi\in \Ludud^2(\Omega)$, multiply the first equation of \eqref{eq:diff-mixed} by $-\qvec$, the second equation of \eqref{eq:diff-mixed} by $\psi$, and integrate over $\Omega$. Adding up the contributions, one finds that:
\begin{equation}\label{eq:VF-0-PC}
-(\T_o\,\pvec,\qvec)_{0,\Omega}
-(\H\grad\phi,\qvec)_{0,\Omega}
+(\H^T\dive\pvec,\psi)_{0,\Omega}
+(\T_e\phi,\psi)_{0,\Omega}
= (S_{f},\psi)_{0,\Omega}.
\end{equation}
One may integrate by parts the second term in the left-hand side, which yields: {$-(\H\grad\phi,\qvec)_{0,\Omega} = (\phi,\H^T\dive\qvec)_{0,\Omega} - (\phi,{\H^T(\qvec\cdot\nvec)})_{0,\pa\Omega}$}.
Hence, the solution to \eqref{eq:diff-mixed} also solves {a variational formulation set in $\Xcalmg = \Qududvec(\Omega)\times \Ludud^2(\Omega)$}:
\begin{equation}\label{eq:VF-1-PC}
\left\{\begin{array}{l}
\mbox{Find $(\pvec,\phi)\in\Xcalmg$ such that }\forall(\qvec,\psi)\in\Xcalmg,\cr
 -(\T_o\,\pvec,\qvec)_{0,\Omega}+(\phi,\H^T\dive\qvec)_{0,\Omega}+(\H^T\dive\pvec,\psi)_{0,\Omega}\cr 
\qquad +(\T_e\,\phi,\psi)_{0,\Omega} {- (\tilde{\Gamma}_e(\pvec\cdot\nvec), (\qvec\cdot\nvec))_{0,\pa\Omega}}= (S_f,\psi)_{0,\Omega},
\end{array}\right.
\end{equation}
where $\tilde{\Gamma}_e=\H\Gamma_e^{-1}\H^T$ is a symmetric positive definite matrix.
Clearly, the form
\begin{align}
c\ :\ ((\pvec,\phi),(\qvec,\psi)) \mapsto 
&-(\T_o\,\pvec,\qvec)_{0,\Omega}
+(\phi,\H^T\dive\qvec)_{0,\Omega}
+(\psi,\H^T\dive\pvec)_{0,\Omega}\nonumber\\
&\quad +(\T_e\,\phi,\psi)_{0,\Omega}
{- (\tilde{\Gamma}_e(\pvec\cdot\nvec), (\qvec\cdot\nvec))_{0,\pa\Omega}},\label{eq:bilin-form-c-PC}
\end{align}
is a  continuous bilinear form on $\Xcalmg$.

We may rewrite the variational formulation \eqref{eq:VF-1-PC} as:
\begin{equation}\label{eq:VF-3-PC}
\left\{\begin{array}{l}
\mbox{Find $(\pvec,\phi)\in\Xcalmg$ such that}\cr
\forall(\qvec,\psi)\in\Xcalmg,\quad c((\pvec,\phi),(\qvec,\psi))={(S_{f},\psi)_{0,\Omega}}.
\end{array}\right.
\end{equation}
{The proof of the next result is classical (and omitted here).}
\begin{proposition}\label{pro:VF-EQ-PC}
	The solution $\zeta=(\pvec,\phi)$ to \eqref{eq:VF-3-PC} satisfies \eqref{eq:diff-mixed}. Hence, problems \eqref{eq:VF-3-PC} and \eqref{eq:diff-mixed} are equivalent.
\end{proposition}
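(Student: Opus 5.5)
The plan is the classical one: test \eqref{eq:VF-3-PC} with well-chosen pairs $(\qvec,\psi)$ and recover the three equations of \eqref{eq:diff-mixed} one at a time, in the sense of distributions. Then we recover the regularity $\phi\in\Hudud^1(\Omega)$ and the Robin condition. For the converse, we rely on the computation leading from \eqref{eq:VF-0-PC} to \eqref{eq:VF-1-PC}.

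\textbf{Step 1 (second equation).} Take $\qvec=0$ and $\psi\in\Ludud^2(\Omega)$ arbitrary. Then $(\H^T\dive\pvec+\T_e\phi-S_f,\psi)_{0,\Omega}=0$ for all $\psi$, so $\H^T\dive\pvec+\T_e\phi=S_f$ in $\Ludud^2(\Omega)$.

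\textbf{Step 2 (first equation and regularity).} Take $\psi=0$ and $\qvec\in\underline{\underline{\mathcal{D}}}(\Omega)^d$, which lies in $\Qududvec(\Omega)$ with $\qvec\cdot\nvec=0$ on $\pa\Omega$. We get $-(\T_o\pvec,\qvec)_{0,\Omega}+(\phi,\H^T\dive\qvec)_{0,\Omega}=0$. In the distributional sense this reads $\T_o\pvec+\H\grad\phi=0$. Since $\H$ is block diagonal with the invertible upper bidiagonal $\Hhat$ (unit diagonal), we have $\grad\phi=-\H^{-1}\T_o\pvec\in\Lududvec^2(\Omega)$. Hence $\phi\in\Hudud^1(\Omega)$ and the first equation holds in $\Lududvec^2(\Omega)$.

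\textbf{Step 3 (boundary condition).} This is the only delicate point. Now take a general $\qvec\in\Qududvec(\Omega)$ and $\psi=0$. Since $\phi\in\Hudud^1(\Omega)$, Green's formula gives $(\phi,\H^T\dive\qvec)_{0,\Omega}=-(\H\grad\phi,\qvec)_{0,\Omega}+(\H\phi,\qvec\cdot\nvec)_{0,\pa\Omega}$. The pairing is an honest $L^2(\pa\Omega)$ integral because $\phi_{|\pa\Omega}\in H^{1/2}$ and $\qvec\cdot\nvec\in L^2(\pa\Omega)$ by definition of $\Qvec(\Omega)$. Using Step 2, the volume terms cancel, and we are left with
\[
(\H\phi-\tilde{\Gamma}_e(\pvec\cdot\nvec),\qvec\cdot\nvec)_{0,\pa\Omega}=0\quad\forall\qvec\in\Qududvec(\Omega).
\]
We then need the surjectivity of the normal trace from $\Qududvec(\Omega)$ onto a dense subset of $\Ludud^2(\pa\Omega)$. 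For any $g\in H^{-1/2}(\pa\Omega)\cap L^2(\pa\Omega)$, in particular any $g\in L^2(\pa\Omega)$, there exists $\qvec=\grad u$ with $u$ solving a Neumann problem $-\Delta u+u=0$, $\pa_n u=g$, so that $\qvec\cdot\nvec=g$. This yields $\H\phi=\tilde{\Gamma}_e(\pvec\cdot\nvec)=\H\Gamma_e^{-1}\H^T(\pvec\cdot\nvec)$ on $\pa\Omega$. Cancelling the invertible $\H$ and multiplying by $\Gamma_e$ (SPD) gives $-\H^T\pvec\cdot\nvec+\Gamma_e\phi=0$, which is the Robin condition.

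\textbf{Converse.} Combined with Step 1, the pair $(\pvec,\phi)$ belongs to $\Qududvec(\Omega)\times\Hudud^1(\Omega)$ and solves \eqref{eq:diff-mixed}. Conversely, a solution of \eqref{eq:diff-mixed} satisfies \eqref{eq:VF-0-PC}. Integrating by parts and replacing $\phi_{|\pa\Omega}$ by $\Gamma_e^{-1}\H^T(\pvec\cdot\nvec)$ gives exactly \eqref{eq:VF-1-PC}, and this establishes the equivalence.
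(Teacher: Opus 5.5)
Your proof is correct and follows the classical argument that the paper alludes to but omits. You test with $\qvec=0$, then with compactly supported $\qvec$ (obtaining $\phi\in\Hudud^1(\Omega)$ from the invertibility of $\H$), then with general $\qvec$, using that normal traces of $\Qvec(\Omega)$ exhaust $L^2(\pa\Omega)$; your Neumann-problem construction gives all of $L^2(\pa\Omega)$, so no density argument is needed. The converse is the computation from \eqref{eq:VF-0-PC} to \eqref{eq:VF-1-PC} already carried out in the paper.
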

One may prove that the mixed formulation \eqref{eq:VF-3-PC} is well-posed
using $\Tcoer$-coercivity, cf. section 1.2.2 in \cite{Ciar25}, i.e. one has to prove that
\begin{align*}
&\exists\alpha>0,\ \exists \Tcoer\in{\cal L}(\Xcalmg) \text{ bijective, such that }\\
&\forall (\pvec,\phi)\in \Xcalmg, \quad c((\pvec,\phi),\Tcoer(\pvec,\phi)) \ge \alpha \|(\pvec,\phi)\|_{\Xcalmg}^2.
\end{align*}
\begin{theorem}\label{th:VF-1-PC}
Let $\T_{o}$ and $\T_e$ satisfy \eqref{Pos-SPN}. Then, the bilinear form $c$ is $\Tcoer$-coercive.\end{theorem}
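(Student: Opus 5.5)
We follow the classical $\Tcoer$-coercivity argument for mixed diffusion problems, as in the Dirichlet case \cite{CDM23,CDM25}, adapted to the boundary term. Given $(\pvec,\phi)\in\Xcalmg$, we build $\Tcoer(\pvec,\phi)=(\qvec,\psi)$ in two pieces. The sign-flip $(-\pvec,\phi)$ handles the $L^2$ parts and the boundary term. A lifting term $\pvec_\phi$ recovers control of $\dive\pvec$ through a suitable test function.

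First, note that
\[
c((\pvec,\phi),(-\pvec,\phi)) = (\T_o\pvec,\pvec)_{0,\Omega}+(\T_e\phi,\phi)_{0,\Omega}+(\tilde{\Gamma}_e(\pvec\cdot\nvec),\pvec\cdot\nvec)_{0,\pa\Omega}.
\]
The two cross terms $(\phi,\H^T\dive\pvec)$ cancel. By \eqref{eq:pos_SPN_mixed}, \eqref{Pos-SPN}-(iii) and the positive definiteness of $\tilde\Gamma_e$ (a constant SPD matrix, since $\H$ is invertible), this is bounded below by $C(\|\pvec\|_0^2+\|\phi\|_0^2+\|\pvec\cdot\nvec\|_{0,\pa\Omega}^2)$. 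The only missing piece of $\|(\pvec,\phi)\|_{\Xcalmg}^2$ is $\|\dive\pvec\|_0^2$.

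To recover $\|\dive\pvec\|_0^2$, we add $\psi$-contributions proportional to $\H^T\dive\pvec$. Choose
\[
(\qvec,\psi)=(-\pvec,\ \phi+\delta\,\H^{-1}\H^{-T}\cdots)
\]
or, more simply, $\psi=\phi+\delta\,\H^T\dive\pvec$ after rescaling. Then $c((\pvec,\phi),(0,\H^T\dive\pvec)) = \|\H^T\dive\pvec\|_0^2+(\T_e\phi,\H^T\dive\pvec)_0$. The first term controls $\|\dive\pvec\|_0^2$ because $\H^T$ is invertible and constant. The cross term is absorbed by Young's inequality, $|(\T_e\phi,\H^T\dive\pvec)|\le \frac12\|\H^T\dive\pvec\|^2+\frac{(\T_e)^{*2}}{2}\|\phi\|^2$. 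Taking $\delta>0$ small enough that $\delta(\T_e)^{*2}/2$ is dominated by the coercivity constant for $\|\phi\|^2$ gives
\[
c((\pvec,\phi),\Tcoer(\pvec,\phi))\ge\alpha\|(\pvec,\phi)\|_{\Xcalmg}^2,\qquad \Tcoer(\pvec,\phi)=(-\pvec,\ \phi+\delta\H^T\dive\pvec).
\]

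It remains to check that $\Tcoer\in{\cal L}(\Xcalmg)$ is bijective. Continuity follows from $\|\dive\pvec\|_0\le\|\pvec\|_{\Qvec}$. For bijectivity, the map is block-triangular, and an explicit inverse is $(\qvec,\psi)\mapsto(-\qvec,\ \psi+\delta\H^T\dive\qvec)$, which is also continuous.

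The main obstacle is the case where $\T_e$ may vanish, mentioned in the remarks, since then the control of $\|\phi\|_0$ is lost. In that case we would replace the $\psi$-correction by a $\qvec$-lifting. We would solve an auxiliary Robin problem $-\Delta w=\phi$ with $\pa_n w+w=0$, and add $\delta\,\H^{-1}\grad w$ to $\qvec$; its boundary normal trace lies in $L^2(\pa\Omega)$ by $H^1$ regularity and the Robin condition. Then $(\phi,\H^T\dive\qvec)$ produces $-\delta\|\phi\|^2$, up to sign conventions, and Young's inequality again absorbs the remaining terms. Under the stated assumptions \eqref{Pos-SPN}, however, the simple construction above should suffice.
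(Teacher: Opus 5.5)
Your proof is correct. It follows the same $\Tcoer$-coercivity template as the paper: the first component is $-\pvec$, which cancels the $\phi$ cross terms and makes the Robin term positive, and the second component is $\phi$ plus a correction built on $\H^T\dive\pvec$ to recover $\|\dive\pvec\|_{0,\Omega}$. The correction, however, is different.

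The paper uses $\Tcoer(\pvec,\phi)=(-\pvec,\frac12(\phi+\T_e^{-T}\H^T\dive\pvec))$. The weight $\T_e^{-T}$ is chosen so that $(\T_e\phi,\T_e^{-T}\H^T\dive\pvec)_{0,\Omega}=(\phi,\H^T\dive\pvec)_{0,\Omega}$. Hence all cross terms cancel exactly, and no parameter or Young's inequality is needed.

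Your choice $(-\pvec,\phi+\delta\H^T\dive\pvec)$ leaves the term $\delta(\T_e\phi,\H^T\dive\pvec)_{0,\Omega}$. You absorb it correctly with $\delta<2(\T_e)_*/((\T_e)^*)^2$, with constants of the same order. In exchange, your map involves only the constant matrix $\H^T$, not the variable coefficient $\T_e^{-1}$. Since $\dive\Qvec_h\subset L_h$, the same map sends $\Xcalmg_h$ into itself. It therefore gives uniform discrete $\Tcoer$-coercivity for every $h$ with the same constants. By contrast, in Theorem~\ref{th:disc+udisc-PC} the paper must replace $\T_e^{-1}$ by its piecewise-constant projection $\T_{e,\text{inv},h}$. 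That costs the assumption $\T_e\in\Pcal{\mathbb W}^{1,\infty}(\Omega)$, the estimate \eqref{eq:approx_Te}, and a restriction to $h$ small enough.

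Two cosmetic points. Delete the unfinished alternative ``$\phi+\delta\,\H^{-1}\H^{-T}\cdots$''. Your closing paragraph on vanishing $\T_e$ is not needed for this statement, since \eqref{Pos-SPN}-(iii) guarantees $(\T_e)_*>0$.
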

\begin{proof}
{We choose the map $\Tcoer$ in the spirit of {\cite[Theorem 3.16]{Giret2018}}.
Given $(\pvec,\phi)\in\Xcalmg$, we let $\Tcoer((\pvec,\phi)) = (-\pvec,\frac{1}{2}(\phi + \T_e^{-T}\H^T\dive\pvec))\in\Xcalmg$.
Obviously, one has $\Tcoer\in{\cal L}(\Xcalmg)$.
In addition, $\Tcoer$ is bijective. Indeed, injectivity is obvious, while given $(\qvec,\psi)\in\Xcalmg$, one checks that choosing $(\pvec,\phi) = (-\qvec, 2\psi + \T_e^{-T}\H^T\dive\qvec)\in\Xcalmg$ yields $\Tcoer((\pvec,\phi)) =(\qvec,\psi)$, so $\Tcoer$ is surjective as well. \\
While, according to the definition of the bilinear form $c$, we have
\begin{align*}
c((\pvec,\phi),\Tcoer(\pvec,\phi))& = (\T_o\,\pvec,\pvec)_{0,\Omega}+\frac{1}{2}(\T_e^{-T}\H^T\dive\pvec,\H^T\dive\pvec)_{0,\Omega} \nonumber\\ &\quad {+ (\tilde{\Gamma}_e(\pvec\cdot\nvec), (\pvec\cdot\nvec))_{0,\pa\Omega}}
+\frac{1}{2}(\T_e\,\phi,\phi )_{0,\Omega}\nonumber\\
&\geq (\T_o)_*\|\pvec\|_{0,\Omega}^2 + \frac{1}{2}(\H^T)_*(\T_e^{-1})_*\|\dive\pvec\|_{0,\Omega}^2\\
&\quad + (\tilde{\Gamma}_e)_* \|(\pvec\cdot\nvec)\|_{0,\partial\Omega}^2+ \frac{1}{2}(\T_e)_*\|\phi\|_{0,\Omega}^2,\nonumber\\
&\geq \min\left\{(\T_o)_*,\frac{1}{2}(\H^T)_*(\T_e^{-1})_*, (\tilde{\Gamma}_e)_*, \frac{1}{2}(\T_e)_*\right\}\|\zeta\|_{\Xcalmg}^2,
\end{align*}
where 
\begin{align*}
&(\H^T)_* = \inf_{X\in\underline{\underline{\R}}\setminus\{0\}}\frac{\|\H^TX\|^2}{\|X\|^2}>0,\ 
(\H^T)^* = \sup_{X\in\underline{\underline{\R}}\setminus\{0\}}\frac{\|\H^TX\|}{\|X\|}>0,\ \\ 
&(\tilde{\Gamma}_e)_* = \inf_{X\in\underline{\underline{\R}}\setminus\{0\}}\frac{X^T\tilde{\Gamma}_e X}{\|X\|^2}>0.
\end{align*}
Hence, the form $c$ is $\Tcoer$-coercive.
}
\end{proof}
\subsection{Discretization and {\em a priori} error analysis}\label{ss-sec-FEM-PC}
We study conforming discretizations of \eqref{eq:VF-3-PC}. Let $(\Tcal_h)_h$ be a family of { meshes, made for instance of simplices, or of rectangles ($d=2$), resp. cuboids ($d=3$)}, indexed by a parameter $h$ equal to the largest diameter of elements of a given {mesh}. Let us introduce some further notations, given such a mesh $\Tcal_h$.
The set of facets of $\Tcal_h$ is denoted $\mathcal{F}_h$, and it is split as $\mathcal{F}_h = \mathcal{F}_h^i \cup \mathcal{F}_h^e$, with $\mathcal{F}_h^e$ (resp. $\mathcal{F}_h^i$) being the set of boundary facets (resp. interior facets). Given $K \in \Tcal_h$, for all faces $F\in \mathcal{F}_h\cap \partial K$, we denote 
$\nvec_F$ the unit outward normal to the face $F$. 
 We introduce discrete, finite-dimensional, spaces indexed by $h$ as follows:
\[ \Qvec_h\subset{\Qvec(\Omega)},\mbox{ and }L_h\subset L^2(\Omega). \]
The conforming discretization of the variational formulation \eqref{eq:VF-3-PC} is then:
\begin{equation}\label{eq:VF-1h-PC}
\left\{\begin{array}{l}
\mbox{Find $(\pvec_h,\phi_h)\in\Qududvec{}_h\times \Ludud{}_h$ such that}\cr
\forall(\qvec_h,\psi_h)\in\Qududvec{}_h\times \Ludud{}_h,\quad c((\pvec_h,\phi_h),(\qvec_h,\psi_h)) = {(S_{f},\psi_h)_{0,\Omega}}.
\end{array}\right.
\end{equation}
Following the definition in \cite[Corollary 26.15]{ern2021finiteII}, we assume that $(\Qvec_h)_h$, resp. $(L_h)_h$ have the {\em approximability property {in $\Omega$}} in the sense that 
\begin{equation}\label{eq:approx-pro}
\begin{array}{l}
\ds\forall \qvec\in {\Qvec(\Omega)},\ \lim_{h\to0}\left(\inf_{\qvec_h\in\Qvec_h}\|\qvec-\qvec_h\|_{{\Qvec(\Omega)}}\right)=0, \cr
\ds\forall \psi\in L^2(\Omega),\ \lim_{h\to0}\left(\inf_{\psi_h\in L_h}\|\psi-\psi_h\|_{0,\Omega}\right)=0.
\end{array}
\end{equation}
We also impose that the space $L_h^0$ of piecewise constant fields on the {mesh} is included in  $L_h$, and that $\dive\Qvec_h\subset L_h$. 
We finally define:
\begin{equation}\nonumber
\Xcal_h = \left\{\,\xi_h=(\qvec_h,\psi_h)\in\Qvec_h\times L_h\right\}\,,\mbox{ endowed with }\|\cdot\|_{\Xcal}\,.
\end{equation}
\begin{remark} At some point, the discrete spaces are considered locally, i.e. restricted to {a single mesh element}. So, one introduces the local spaces $\Qvec_h(K)$, $L_h(K)$, $\Xcal_h(K)$ for every $K\in \Tcal_h$.
\end{remark}
Provided the above conditions are fulfilled, one may derive a uniform discrete inf-sup condition under the same assumptions as in theorem~\ref{th:VF-1-PC},
{We proceed by using the equivalent notion of uniform $\Tcoer$-coercivity, cf. section 1.3.2 in \cite{Ciar25}, i.e. one has to prove that
\[ \begin{array}{l}
\exists\alpha^\star,\beta^\star,h_0>0,\ \forall h\in(0,h_0],\ \exists \Tcoer_h\in{\cal L}(\Xcalmg_h) \mbox{ such that }\forall (\pvec_h,\phi_h)\in\Xcalmg_h,\cr
\|\Tcoer_h(\pvec_h,\phi_h)\|_{\Xcalmg} \le \beta^\star \|(\pvec_h,\phi_h)\|_{\Xcalmg},\mbox{ and } 
 \ c((\pvec_h,\phi_h),\Tcoer_h(\pvec_h,\phi_h)) \ge \alpha^\star \|(\pvec_h,\phi_h)\|_{\Xcalmg}^2.
\end{array} \]
}
\begin{theorem}\label{th:disc+udisc-PC}
Let $\T_{o}\in\Pcal{\mathbb W}^{1,\infty}(\Omega)$ and $\T_{e}\in\Pcal{\mathbb W}^{1,\infty}(\Omega)$ satisfy \eqref{Pos-SPN}. 
 Assume that $(\Qvec_h)_h$, $(L_h)_h$ fulfill \eqref{eq:approx-pro}, $L_h^0\subset L_h$ and $\dive\Qvec_h\subset L_h$ for all $h$. Then the bilinear form $c$ {is uniformly $\Tcoer$-coercive}.
\end{theorem}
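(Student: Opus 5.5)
The plan is to build a discrete operator $\Tcoer_h$ that mimics the continuous $\Tcoer$ of Theorem~\ref{th:VF-1-PC}. The only point where the continuous construction leaves the discrete space is the second component $\frac12(\phi+\T_e^{-T}\H^T\dive\pvec)$. Indeed, $\phi_h$ and $\H^T\dive\pvec_h$ both lie in $\Ludud_h$, because $\dive\Qvec_h\subset L_h$ and $\H$ is a constant matrix. However, multiplication by the (piecewise $W^{1,\infty}$) matrix field $\T_e^{-T}$ does not preserve $\Ludud_h$. We therefore set
\[
\Tcoer_h(\pvec_h,\phi_h)=\Big(-\pvec_h,\ \tfrac12\big(\phi_h+\Pi_h(\T_e^{-T}\H^T\dive\pvec_h)\big)\Big),
\]
where $\Pi_h$ is the $L^2$-orthogonal projection onto $\Ludud_h$. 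Uniform boundedness is then immediate: $\|\Tcoer_h\zeta_h\|_{\Xcalmg}\le\beta^\star\|\zeta_h\|_{\Xcalmg}$, with $\beta^\star$ depending only on $(\T_e^{-1})^*$ and $(\H^T)^*$, since $\Pi_h$ is a contraction in $L^2$.

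Next we compute $c(\zeta_h,\Tcoer_h\zeta_h)$. Because $\H^T\dive\pvec_h\in\Ludud_h$, the projection can be removed in the term $(\Pi_h(\T_e^{-T}\H^T\dive\pvec_h),\H^T\dive\pvec_h)_{0,\Omega}$. That term therefore equals its continuous counterpart, and so do the $\T_o$ and $\tilde\Gamma_e$ terms and the $\frac12(\T_e\phi_h,\phi_h)$ term. The only new contribution is the cross term
\[
\tfrac12\big(\T_e\phi_h,\ (\Pi_h-I)(\T_e^{-T}\H^T\dive\pvec_h)\big)_{0,\Omega}.
\]
In the continuous proof this term was cancelled exactly by $-\frac12(\phi,\H^T\dive\pvec)$, and the cancellation relied on $\T_e\T_e^{-T}=I$ after transposition. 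So the task reduces to bounding the remainder $\frac12\big((I-\Pi_h)(\T_e^T\phi_h),\ \T_e^{-T}\H^T\dive\pvec_h\big)_{0,\Omega}$, with the transposes arranged appropriately.

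This commutator estimate is the main obstacle. We would write $\T_e=\overline{\T_e}+(\T_e-\overline{\T_e})$, where $\overline{\T_e}$ is the elementwise mean of $\T_e$ on each $K$. The mean part commutes with $\Pi_h$ on each element, since piecewise constants (matrix-valued) times $L_h$ stay in $L_h$; this is a property we must check, and it holds for $L_h$ a product of elementwise polynomial spaces. The fluctuation part satisfies $\|\T_e-\overline{\T_e}\|_{L^\infty(K)}\lesssim h\,\|\T_e\|_{\Pcal W^{1,\infty}}$, at least for meshes aligned with the partition $\{\Omega_i\}$; this is where the assumption $\T_e\in\Pcal{\mathbb W}^{1,\infty}(\Omega)$ enters. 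Combining the two parts, the remainder is bounded by $C h\|\phi_h\|_{0,\Omega}\|\dive\pvec_h\|_{0,\Omega}$. By Young's inequality we then obtain
\[
c(\zeta_h,\Tcoer_h\zeta_h)\ge(\alpha-Ch)\|\zeta_h\|_{\Xcalmg}^2,
\]
where $\alpha$ is the constant from Theorem~\ref{th:VF-1-PC}.

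It then suffices to choose $h_0$ with $Ch_0\le\alpha/2$, which gives $\alpha^\star=\alpha/2$. If the mesh is not aligned with the partition, we would instead fall back on an abstract argument: use the approximability property \eqref{eq:approx-pro} together with compactness-type density of $\T_e^{-T}\Ludud_h$, and argue by contradiction to obtain $h_0$. The aligned case is the one we would present.
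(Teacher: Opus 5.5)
Your proof is correct and follows the paper's overall strategy: you retrace the continuous proof with a discrete substitute for $\frac12\T_e^{-T}\H^T\dive\pvec$, then absorb an $O(h)$ perturbation for $h\le h_0$. The difference is in how that substitute is chosen.

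The paper projects the \emph{coefficient}. It replaces $\T_e^{-1}$ by $\T_{e,\text{inv},h}$, its entrywise $L^2$-projection onto $L_h^0$, and relies on the single estimate $\|(\T_e^{-1}-\T_{e,\text{inv},h})\psi\|_{0,\Omega}\le C_e h\|\psi\|_{0,\Omega}$. The perturbation then appears twice: in the divergence term, and in the cross term $\frac12((\T_{e,\text{inv},h}\T_e-\mathtt{I})\phi_h,\H^T\dive\pvec_h)_{0,\Omega}$.

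You project the \emph{product} onto $\Ludud_h$. As a result, the divergence term coincides with the continuous one, and membership in $\Xcalmg_h$ and uniform boundedness come for free. The only perturbation is the commutator $-\frac12((I-\Pi_h)(\T_e\phi_h),\T_e^{-T}\H^T\dive\pvec_h)_{0,\Omega}$. Note that $\T_e\phi_h$ appears there, not $\T_e^T\phi_h$. You control it with a piecewise-constant approximation of $\T_e$ itself rather than of $\T_e^{-1}$. Your route costs an extra commutator step, and in return gives a coercivity constant with no $h$-loss on $\|\dive\pvec_h\|_{0,\Omega}$.

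Both arguments silently use the same two facts:
\begin{itemize}
\item $L_h$ must be stable under multiplication by elementwise-constant matrices. The paper needs $\T_{e,\text{inv},h}^T\H^T\dive\pvec_h\in\Ludud_h$, and you need $\overline{\T_e}\phi_h\in\Ludud_h$. This is more than $L_h^0\subset L_h$, but it holds for RTN${}_k$.
\item The mesh must be aligned with $\{\Omega_i\}$ for the $O(h)$ coefficient bound. The paper assumes this implicitly when it applies the elementwise approximation estimate to $\T_e^{-1}$.
\end{itemize}
So presenting only the aligned case loses nothing relative to the paper.

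You should drop the non-aligned fallback, though. Bounded families of discrete $\phi_h$ have no compactness in $L^2$. Moreover, when an element straddles a jump of $\T_e$, the commutator can be of order one rather than $O(h)$. The contradiction argument therefore does not close.
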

{As is classical when one uses the $\Tcoer$-coercivity theory, we retrace the steps of the proof of Theorem~\ref{th:VF-1-PC}, adding indices $_h$ along the process}.
\begin{proof}
{Given $h$, we define $\Tcoer_h\in{\cal L}(\Xcalmg_h)$ by} $$(\pvec_h,\phi_h) \mapsto(-\pvec_h,\frac{1}{2}(\phi_h + \T_{e,\text{inv},h}^{T}\H^T\dive\pvec_h)),$$ where $\T_{e,\text{inv},h}$ is the matrix of the projection of all the {entries} of $\T_{e}^{-1}$ onto $L^0_h$.  Indeed, since the matrix $\T_{e,inv,h}$ is piecewise constant, {for all $\pvec_h\in\Qududvec{}_h$, it holds that $\T_{e,inv,h}^T\H^T\dive\pvec_h\in\Ludud{}_h$}. Moreover, applying~\cite[Theorem 18.18]{ern2021finiteI} to each {entry of $\T_{e}^{-1}$} yields 
\begin{equation}
{\exists C_e>0,\ \forall h},\ \forall \psi\in \Ludud^2(\Omega),\quad \|(\T_{e}^{-1}-\T_{e,\text{inv},h})\psi\|_{0,\Omega}\le {C_e}h\|\psi\|_{0,\Omega}. \label{eq:approx_Te}
\end{equation}
Using the triangular inequality at the second line and estimates~\eqref{eq:pos_SPN_mixed} and~\eqref{eq:approx_Te} at the last line, we have
\begin{align}
\|{\Tcoer_h(\pvec_h,\phi_h)}\|_{\Xcalmg}^2& = \|\pvec_h\|_{\Qududvec(\Omega)}^2 +\frac{1}{4}\|\phi_h + \T_{e,\text{inv},h}^{T}\H^T\dive\pvec_h\|_{0,\Omega}^2\nonumber\\
&\leq\|\pvec_h\|_{\Qududvec(\Omega)}^2 +\frac{1}{4}(\|\phi_h\|_{0,\Omega} + \|\T_{e}^{-T}\H^T\dive\pvec_h\|_{0,\Omega} \nonumber\\
&\qquad\qquad\qquad\qquad +\|(\T_{e,\text{inv},h}^{T}-\T_{e}^{-T})\H^T\dive\pvec_h\|_{0,\Omega})^2\nonumber\\
&\leq\|\pvec_h\|_{\Qududvec(\Omega)}^2 + \frac{3}{4}(\|\phi_h\|_{0,\Omega}^2 + \|\T_{e}^{-T}\H^T\dive\pvec_h\|_{0,\Omega}^2 \nonumber\\
&\qquad\qquad\qquad\qquad +\|(\T_{e,\text{inv},h}^{T}-\T_{e}^{-T})\H^T\dive\pvec_h\|_{0,\Omega}^2)\nonumber\\
&\leq (1+\frac{3}{4}\{(\T_e^{-1})^*(\H^T)^*\}^2+\{{C_e}h(\H^T)^*\}^2)\|\pvec_h,\phi_h\|_{\Xcalmg}^2.\nonumber
\end{align}
{Hence, the mappings $(\Tcoer_h)_h$ are uniformly bounded.} \\
According to the definition of the bilinear form $c$, we have
\begin{align}
&\qquad c((\pvec_h,\phi_h),\Tcoer_h(\pvec_h,\phi_h))\nonumber\\
& = (\T_o\,\pvec_h,\pvec_h)_{0,\Omega}+\frac{1}{2}(\T_{e,\text{inv},h}^{T}\H^T\dive\pvec_h,\H^T\dive\pvec_h)_{0,\Omega}  \nonumber\\
&\quad
+ (\tilde{\Gamma}_e(\pvec_h\cdot\nvec), (\pvec_h\cdot\nvec))_{0,\pa\Omega}  +\frac{1}{2}(\T_e\,\phi_h,\phi_h )_{0,\Omega}\nonumber\\
&\quad + \frac{1}{2}((\T_{e,\text{inv},h}\T_e-\mathtt{I})\phi_h,\H^T\dive\pvec_h)_{0,\Omega},\nonumber\\
&\geq (\T_o)_*\|\pvec_h\|_{0,\Omega}^2  + \{\frac{1}{2}(\T_e^{-1})_*(\H^T)_*-{C_e(\T_e)^*}h\}\|\dive\pvec_h\|_{0,\Omega}^2 \nonumber \\
&\quad + (\tilde{\Gamma}_e)_* \|(\pvec_h\cdot\nvec)\|_{0,\partial\Omega}^2+ \frac{1}{2}(\T_e)_*\|\phi_h\|_{0,\Omega}^2 - {C_e(\T_e)^*}h\|\phi_h\|_{0,\Omega}\|\dive\pvec_h\|_{0,\Omega},\nonumber\\
&\geq \left\{\min\{(\T_o)_*,\frac{1}{2}(\H^T)_*(\T_e^{-1})_*, (\tilde{\Gamma}_e)_*, \frac{1}{2}(\T_e)_*)\}-{C_e(\T_e)^*}h\right\} \|\zeta\|_{\Xcalmg}^2.
\end{align}
The limit (as $h$ goes to $0$) of the coefficient between brackets is strictly positive, so the claim is proven.
\end{proof}
The classical {\em a priori} error analysis follows {(Céa's lemma)}. Let $\zeta_h=(\pvec_h,\phi_h)$ be the solution to \eqref{eq:VF-1h-PC}.
\begin{corollary}\label{cor:cv-PC}
{Under the assumptions of Theorem~\ref{th:disc+udisc-PC}}, there holds:
\begin{equation}\label{eq:error-estimate}
\exists\,C>0,\quad\forall h,\quad
\|\zeta-\zeta_h\|_{\Xcalmg} \le C\,\inf_{\xi_h\in\xvec_h}\|\zeta-\xi_h\|_{\Xcalmg}.
\end{equation}
\end{corollary}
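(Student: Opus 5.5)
The estimate \eqref{eq:error-estimate} will follow from the standard Céa-type argument for uniformly stable conforming Galerkin approximations, based on Theorem~\ref{th:disc+udisc-PC} and the continuity of $c$. Uniform $\Tcoer$-coercivity is equivalent to a uniform discrete inf-sup condition. For $h\le h_0$ and any $\xi_h\in\Xcalmg_h$, Theorem~\ref{th:disc+udisc-PC} gives
\[
\sup_{\eta_h\in\Xcalmg_h\setminus\{0\}}\frac{c(\xi_h,\eta_h)}{\|\eta_h\|_{\Xcalmg}}\ \ge\ \frac{c(\xi_h,\Tcoer_h\xi_h)}{\|\Tcoer_h\xi_h\|_{\Xcalmg}}\ \ge\ \frac{\alpha^\star}{\beta^\star}\,\|\xi_h\|_{\Xcalmg}.
\]
In particular, the discrete problem \eqref{eq:VF-1h-PC} is well posed for $h\le h_0$, so $\zeta_h$ is well defined.

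The key step is Galerkin orthogonality. Since $\Xcalmg_h\subset\Xcalmg$ (conforming discretization), subtracting \eqref{eq:VF-1h-PC} from \eqref{eq:VF-3-PC} tested with $\eta_h\in\Xcalmg_h$ gives $c(\zeta-\zeta_h,\eta_h)=0$ for all $\eta_h\in\Xcalmg_h$. Fix an arbitrary $\xi_h\in\Xcalmg_h$, apply the inf-sup bound to $\zeta_h-\xi_h$, and use orthogonality:
\[
\frac{\alpha^\star}{\beta^\star}\|\zeta_h-\xi_h\|_{\Xcalmg}\le \sup_{\eta_h}\frac{c(\zeta-\xi_h,\eta_h)}{\|\eta_h\|_{\Xcalmg}}\le \|c\|\,\|\zeta-\xi_h\|_{\Xcalmg}.
\]
Here $\|c\|$ is the continuity constant of $c$ on $\Xcalmg$. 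It depends only on $(\T_o)^*$, $(\T_e)^*$, $(\H^T)^*$ and the largest eigenvalue of $\tilde\Gamma_e$, the boundary term being controlled because the $\Qududvec(\Omega)$ norm includes $\|\qvec\cdot\nvec\|_{0,\pa\Omega}$. The triangle inequality $\|\zeta-\zeta_h\|\le\|\zeta-\xi_h\|+\|\xi_h-\zeta_h\|$ and taking the infimum over $\xi_h$ then give \eqref{eq:error-estimate} with $C=1+\beta^\star\|c\|/\alpha^\star$ for $h\le h_0$.

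The main obstacle is that the statement is claimed for all $h$, whereas Theorem~\ref{th:disc+udisc-PC} only yields stability for $h\le h_0$. For the finitely many meshes in the family with $h>h_0$, or more generally on a range where $h$ is bounded below, I will argue directly. On each such finite-dimensional space $\Xcalmg_h$, the map $\xi_h\mapsto(\phi_h+\T_e^{-T}\H^T\dive\qvec_h)$ is not available, so instead I will use the discrete problem's solvability, assumed or shown by injectivity in finite dimension. This gives a mesh-dependent constant, and taking the maximum over those meshes produces a uniform $C$. If the family is not finite for $h>h_0$, I will simply state the result for $h\in(0,h_0]$, which is what the uniform $\Tcoer$-coercivity delivers.
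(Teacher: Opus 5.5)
Your proposal is correct and is exactly the argument the paper has in mind when it invokes C\'ea's lemma after Theorem~\ref{th:disc+udisc-PC}: uniform $\Tcoer$-coercivity gives a discrete inf-sup constant $\alpha^\star/\beta^\star$, and Galerkin orthogonality, continuity of $c$ and the triangle inequality then yield $C=1+\beta^\star\|c\|/\alpha^\star$. Your caveat on $h>h_0$ is a legitimate refinement that the paper glosses over, and injectivity there is immediate: for $\zeta_h$ in the discrete kernel, testing with $(-\pvec_h,\phi_h)$ gives $d_S(\zeta_h,\zeta_h)=0$, hence $\zeta_h=0$, so a uniform constant follows whenever only finitely many meshes have $h>h_0$.
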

Explicit {\em a priori} error estimates may be derived, see eg. \cite{CGJK18,Gerv26}. \\
In this paper, we focus on the Raviart-Thomas-N\'ed\'elec (RTN) Finite Element~\cite{RaTh77,Nedelec1980}. \\
{ For {\em simplicial meshes}}, that is meshes made of simplices, the finite element spaces RTN${}_k$ can be described as follows, where $k\ge0$ is the order of the discretization for  the scalar fields of $L_h$, see eg. \cite{BoBF13}. \\ 
The boundary of a simplex $K\in\Tcal_h$ is made of the union of $(d-1)$-simplices, called {\em facets} from now on, and denoted by $(F_e^K)_{1\le e\le d+1}$. We let $\mathbb{P}_k(K)$ be the space of polynomials of maximal degree $k$ on $K$, resp. $\mathbb{P}_k(F_e^K)$ the space of polynomials of maximal degree $k$ on $F_e^K$. The definition is 
\begin{eqnarray*} 
 &&\hskip -8truemm\mathrm{RTN}_k(K) = \{\qvec\in \Lvec^2(K) \,|\,\exists \avec\in (\mathbb{P}_k(K))^d,\,\exists b\in \mathbb{P}_k(K),\ \forall\xvec\in K,\ \qvec(\xvec) = \avec + b\xvec\nonumber \}.
\end{eqnarray*}
{ Observe that for all $\qvec\in \mathrm{RTN}_k(K)$, for all $e\in\{ 1,\cdots,d+1\},\  (\qvec\cdot\nvec)_{|F_e^K}\in \mathbb{P}_k(F_e^K)$.}
The definitions of the finite element spaces RTN${}_k$ are then 
\begin{align*}
&\Qvec_{h}= \{\qvec_h\in {\Qvec(\Omega)} \ |\ \forall K\in\Tcal_h,\ \qvec_h{}_{|K} \in  \mathrm{RTN}_k(K)\},\ \\
&L_h= \{\psi_h\in L^2(\Omega) \ |\ \forall K\in\Tcal_h,\ \psi_h{}_{|K} \in  \mathbb{P}_k(K)\}.
\end{align*}
 For {\em rectangular {or Cartesian} meshes}, a description of the Raviart-Thomas-N\'ed\'elec (RTN) finite element spaces can be found for instance in Section 4.2 of \cite{JaCi13}. {We consider those meshes explicitly for the numerical examples, see Section~\ref{sec:numerical_results}}.

\section{{\em A posteriori} studies for a mixed Finite element discretization}\label{section:error-estimator}
To develop the study of {\em a posteriori} estimates, we use the {so-called} reconstruction of the discrete solution { $\zeta_h$. In what follows, we denote by $\tilde{\zeta}_h:=\tilde{\zeta}_h(\zeta_h)$ a reconstruction, and by $\eta:=\eta(\tilde{\zeta}_h)$ an estimator. Classically, our aim is to obtain {\em reliable} and {\em efficient} estimators for the reconstructed error $\zeta-\tilde{\zeta}_h$, meaning that:
\[ \begin{array}{ll}
\|\zeta-\tilde{\zeta}_h\| \le \mathtt{C}\,\eta & \mbox{(reliability)} \cr
\eta \le \mathtt{c}\,\|\zeta-\tilde{\zeta}_h\| & \mbox{(efficiency)} \end{array} \]
where $\mathtt{C}$ and $\mathtt{c}$ are generic constants, and $\|\cdot\|$ is some norm to measure the error.} To that aim, the original space of solutions $\Hudud^1(\Omega)$ (see \eqref{eq:diff_primal}), {is assumed from now on to be} the {\em default} space of (scalar) reconstructed fields, {and we let $V = H^1(\Omega)$}. We also introduce the {\em broken spaces}
\begin{align*}
&H^1(\Tcal_h) = \{\psi\in L^2(\Omega) \ |\  \psi\in H^1(K) , \forall K \in \Tcal_h \},\\
&{\Qvec(\Tcal_h) = \{\qvec\in \Lvec^2(\Omega) \ |\  \qvec\in \Hvec(\dive;K) , \forall K \in \Tcal_h \text{ and } (\qvec\cdot\nvec)_{|_F} \in L^2(F) , \forall F \in\mathcal{F}^e_h \}}. 
\end{align*}
Following the approach in~\cite{CDM23}, the reconstruction $\tilde{\zeta}_h=(\tilde{\pvec}_h,\tilde{\phi}_h)$ is defined as 
\begin{align*}
&\tilde{\pvec}_h= \pvec_h \in \Qududvec{}_h\subset \Qududvec(\Omega),\\
&{\tilde{\phi}_h \in \Vudud}.
\end{align*}
In {Section~\ref{sec:reconstructions}}, {we recall some reconstruction approaches for RTN finite element spaces}. Section~\ref{sec:error_estimates} is devoted to the derivation of {\em a posteriori} estimates.

\subsection{{Reconstruction of the discrete solution}}\label{sec:reconstructions}
In this section, we present some approaches to devise a reconstruction of the discrete solution $(\pvec_h,\phi_h)$, here obtained with the RTN${}_k$ finite element discretization, for $k\ge0$. {Below, the novelty consists in taking into account the Robin boundary condition}. \\
 For illustrative purposes, we consider simplicial meshes (see Remark~\ref{remark_reconstruction_rectangle}).
We denote by $\mathbb{P}_k(\mathcal{T}_h)$ the space of piecewise polynomials of maximal degree $k$ on each {(closed)} simplex $K\in \mathcal{T}_h$. We let $\mathcal{V}_h^k$ be the set of interpolation points (or nodes) where the degrees of freedom of the $V$-conforming Lagrange Finite Element space of order $k$ are defined. And, for a node $a\in\mathcal{V}_h^k$, we denote by $\mathcal{T}_a$ the set of simplices $K$ such that $a\in K$. \\
The definition of the {(original)} {Oswald interpolation operator~\cite{oswald1993bpx}}
{$\mathcal{I}_{\text{Os}} : \mathbb{P}_k(\mathcal{T}_h) \to \mathbb{P}_{k}(\mathcal{T}_h) \cap V$} is
\begin{equation*}
{\forall \phi_h\in\mathbb{P}_k(\mathcal{T}_h),\ \forall a\in\mathcal{V}_h^k},\quad \mathcal{I}_{\text{Os}}(\phi_h)(a)= \frac{1}{|\mathcal{T}_a|} \displaystyle \sum_{K\in \mathcal{T}_a} \phi_h{}_{|K}(a).
\end{equation*}

\begin{remark}\label{remark_reconstruction_rectangle}
The results presented in this section can be extended to the case of rectangular {or cuboid} meshes~\cite{Vohralik2010}. 
\end{remark}

\subsubsection{Averaging operator}
\label{sec:average_reconstruction}
We introduce the averaging operator of the neutron flux 
$\mathcal{I}_{av} : \Pudud_k(\mathcal{T}_h) \to \Pudud_{k+1}(\mathcal{T}_h) \cap V$ such that $\forall \phi_h\in\Pudud_k(\mathcal{T}_h)$,
\begin{equation*}
{ \forall a\in\mathcal{V}_h^{k+1}},\quad \mathcal{I}_{av}(\phi_h)(a)=
 \left\{\begin{aligned}
 &{\frac{1}{|\mathcal{T}_a|} \displaystyle \sum_{K\in \mathcal{T}_a} ({\Gamma_e^{-1}\H^T(\pvec_h\cdot\nvec)}){}_{|K}(a) \quad \text{ if }a\in\partial\Omega,}\\ 
 &\frac{1}{|\mathcal{T}_a|} \displaystyle \sum_{K\in \mathcal{T}_a} \phi_h{}_{|K}(a)\quad \text{ otherwise.}
 \end{aligned}\right.
\end{equation*}
\begin{remark}
We note that, for $a\in\mathcal{V}_h^{k+1}$, $\Gamma_e^{-1}\H^T(\pvec_h\cdot\nvec)(a)$ can be multi-valued. On the other hand, for all $K\in \mathcal{T}_h$, $\Gamma_e^{-1}\H^T(\pvec_h\cdot\nvec)$ is single-valued over $K$. Hence, the value $(\Gamma_e^{-1}\H^T(\pvec_h\cdot\nvec)){}_{|K}(a)$ is well-defined. Likewise, the equality $\H^T(\pvec_h\cdot\nvec)-\Gamma_e\mathcal{I}_{av}(\phi_h)=0$ does not hold over $\partial\Omega$. Indeed, we recall that, $\H^T(\pvec_h\cdot\nvec)_{|\partial\Omega}$ is only piecewise smooth (and continuous if and only if it is equal to a constant), while $(\mathcal{I}_{av}(\phi_h))_{|\partial\Omega}$ is automatically continuous.
\end{remark}
The {\em average reconstruction} is then
\begin{equation}
\tilde{\zeta}_{av,h}=(\pvec_h,\mathcal{I}_{av}(\phi_h)).
\label{eq:reconstruction_average}
\end{equation}
{Importantly,} the definition of the operator on the interpolation points located at the boundary is {driven} by the a posteriori analysis (cf.~\eqref{eq:def_bc_estimator}) presented in Section~\ref{sec:error_estimates}.

\subsubsection{Post-processing approach}
We {outline next} the approach proposed in~\cite{Arbogast1995}, valid for $k\ge0$. It is shown there that the solution to~\eqref{eq:VF-1h-PC}, $\zeta_h=(\mathbf{p}_{h},\phi_h) \in \Xcalmg_h$, is also equal to the first argument of the solution of a hybrid formulation, where the constraint on the continuity of the normal trace of $\pvec_h$ is relaxed. Let
\[ \Lambda_h=\left\{\lambda_h\in L^2(\mathcal{F}^i_h)\ |\ \exists \qvec_h \in \Qvec_h,\, \lambda_h{}_{|F}=\qvec_h\cdot \nvec_{|F},\, \forall F\in \mathcal{F}^i_h \right\}, \] be the space of the Lagrange multipliers and let $\tilde{\Xcal}_h=\Pi_{K\in\Tcal_h}\Xcal_h(K)$ be the unconstrained approximation space {with the RTN${}_k$ local finite element spaces. By definition, $\Xcal_h$ is a strict subset of $\tilde{\Xcal}_h$}. \\
{The hybrid formulation is:}
\begin{equation}\label{eq:hybrid_formulation}
\left\{\begin{array}{l}
\mbox{Find $(\zeta_h,\lambda_h)\in \tilde{\Xcalmg}_h\times\Lambdaudud_h$ such that }\forall (\xi_h,\mu_h)\in \tilde{\Xcalmg}_h\times\Lambdaudud_h,\cr
\ds c(\zeta_h,\xi_h)-\sum_{F\in \mathcal{F}^i_h}\int_F \lambda_h[\qvec_h\cdot \nvec]+\sum_{F\in \mathcal{F}^i_h}\int_F \mu_h[\pvec_h\cdot \nvec]={(S_{f},\psi_h)_{0,\Omega}}.
\end{array}\right.
\end{equation}

Let $\Pi_{M_h}:\tilde{\Xcalmg}_h\times\Lambdaudud_h \to {\Mudud_h}$ be the projection onto an appropriate space {$\Mudud_h$ (we refer to~\cite{Arbogast1995,CiDoGeMa25} for the definition of $M_h$)}
such that, {given $(\zeta_h,\lambda_h)\in \tilde{\Xcalmg}_h\times\Lambdaudud_h$, its projection $\widehat{\phi}_h=\Pi_{M_h}(\zeta_h,\lambda_h)$ is governed by}
 \[ 
 \forall (\psi_h,\mu_h)\in {\Ludud_h}\times \Lambdaudud_h,\quad ({\widehat{\phi}_h},\psi_h)_{0,\Omega} + \sum_{F\in \mathcal{F}^i_h}\int_F { \widehat{\phi}_h}\mu_h = (\phi_h,\psi_h)_{0,\Omega} + \sum_{F\in \mathcal{F}^i_h}\int_F \lambda_h\mu_h.
 \]

{Taking into account the Robin boundary condition}, the RTN post-processing is defined {here} by $ \mathcal{I}^2_{\text{RTN}} : \tilde{\Xcalmg}_h\times\Lambdaudud_h \to \Pudud_{k+2}(\mathcal{T}_h) \cap {\Vudud}$ such that $\forall (\zeta_h,\lambda_h)\in\tilde{\Xcalmg}_h\times\Lambdaudud_h,$
\begin{equation*}
{ \forall a\in\mathcal{V}_h^{k+2}},\quad \mathcal{I}^2_{\text{RTN}}{(\zeta_h,\lambda_h)}(a)=  \left\{\begin{aligned}
&{\frac{1}{|\mathcal{T}_a|} \displaystyle \sum_{K\in \mathcal{T}_a} ({\Gamma_e^{-1}\H^T(\pvec_h\cdot\nvec)})_{|K}(a) \quad  \text{ if }a\in\partial\Omega,}\\
&\frac{1}{|\mathcal{T}_a|} \displaystyle \sum_{K\in \mathcal{T}_a} { (\Pi_{M_h}(\zeta_h,\lambda_h))}{}_{|K}(a) \quad  \text{otherwise.}
\end{aligned}\right.
\end{equation*}
The {\em reconstruction} associated to the RTN post-processing is 

\begin{equation}
\tilde{\zeta}_{\text{RTN},h}=(\pvec_h, \mathcal{I}^2_{\text{RTN}}{(\zeta_h,\lambda_h)}).
\label{eq:reconstruction_RTN_post-processing}
\end{equation}

\subsection{{\em A posteriori} error estimates}
\label{sec:error_estimates}
We now detail the derivation of {\em a posteriori} estimates. We define 
\begin{align*}
&d_S(\zeta,\xi) = { (\T_o\,\pvec,\qvec)_{0,\Omega}+(\T_e\phi,\psi)_{0,\Omega}}{+ (\tilde{\Gamma}_e(\pvec\cdot\nvec), (\qvec\cdot\nvec))_{0,\pa\Omega}},\\ 
&d(\zeta,\xi)= d_S(\zeta,\xi){ +(\psi,\H^T\dive\pvec)_{0,\Omega}-(\phi,\H^T\dive\qvec)_{0,\Omega}}= c(\zeta,(-\mathbf{q},\psi)). \\
\end{align*}
{It is understood that} the definition is extended to piecewise smooth fields on $\Tcal_h$ by replacing $\displaystyle\int_{\Omega}$ by $\displaystyle \sum_{K\in \Tcal_h}\int_K$. \\
Given $K\in\Tcal_h$, we also define $\pi_0^K$ the $L^2(K)$-orthogonal projection on the space $L^0_h(K)$, $\delta_{e,o}$ the diagonal part of the matrix $\T_{e,o}$, and
\[ \delta^{max}_{e,K} = \max_{g\in\Ical_G, i\in\Ical_e}\sup_{K}(((\T_{e})_{g,g})_{i,i}),\quad
\delta^{min}_{e,K} = \min_{g\in\Ical_G, i\in\Ical_e}\inf_{K}(((\T_{e})_{g,g})_{i,i}) ,
\]
\[ \delta^{max}_{o,K} = \max_{g\in\Ical_G, i\in\Ical_o}\sup_{K}(((\T_{o})_{g,g})_{i,i}),\quad
\delta^{min}_{o,K} = \min_{g\in\Ical_G, i\in\Ical_o}\inf_{K}(((\T_{o})_{g,g})_{i,i}) ,
\]
\[(\tilde{\Gamma}_e)^* = \sup_{X\in\underline{\underline{\R}}\setminus\{0\}}\frac{X^T\tilde{\Gamma}_eX}{\|X\|^2}.\]
In order to state the estimates, { at some point} we will use the following assumptions.
\begin{assumption}\label{assumption:locality_polynomial}
The coefficients of $\T_{o}$ and $\T_{e}$ are piecewise polynomials
 on $\Tcal_h$, and $S_f\in {\Ludud_h}$. {In addition, we suppose that $\tilde{\phi}_h$ is piecewise polynomial on $\Tcal_h$.}
\end{assumption}

In~\cite{CDM23}, some of the co-authors proposed two alternatives: for the first one they measure the error with respect to the {basic $\Lvec^2(\Omega) \times L^2(\Omega)$ norm, while for the second one they use the strenghtened ${\Hvec(\dive,\Tcal_h)} \times L^2(\Omega)$ norm. We are dealing with a Robin boundary condition, so one has to incorporate a measure of the normal trace of the vector-valued fields. Since existence of the normal trace is guaranteed for elements of $\Hvec(\dive,\cdot)$ this indicates that an appropriate norm should be based on the strengthened norm}. {For this reason, we introduce} the norm $\|\cdot\|_S$ on $\Xcalmg$ where, for all $\zeta\in \Xcalmg$,
\begin{align} 
\|\zeta\|_S^2
&= (\delta_o\pvec,\pvec)_{0,\Omega}
+(\delta_e\,\phi,\phi)_{0,\Omega} \nonumber\\
&\quad 
+{\sum_{K\in \Tcal_h} \delta^{max}_{o,K}h_K^2 \| \dive \pvec\|_{0,K}^2}
{+ \sum_{F\in\mathcal{F}^e_h} \delta^{max}_{o,K_F}h_{\perp F} \|\tilde{\Gamma}_e^{1/2}(\pvec\cdot\nvec)\|_{0,F}^2},\label{strenghtened-norm}
\end{align}
where $h_{\perp F}$ the length of the altitude associated to the face $F\in\mathcal{F}^e_h$ in the mesh element $K_F$ such that $F\subset\partial K$.
Observe that the norm $\|\cdot\|_S$ measures elements of $\Xcalmg$ in a weighted $\Qududvec(\Tcal_h)\times \Ludud^2(\Omega)$ norm, {similarly to~\cite[\S8]{Giret2018})}. \\
{For $K\in\Tcal_h$, we introduce} $$N(K)=\{K'\in\Tcal_h\ |\ \text{dim}_{H}({\partial K'\cap \partial K})=d-1\},$$ where dim$_H$ is the Hausdorff dimension, and $$\Xcalmg_K  = \left\{\zeta=(\pvec,\phi)\in \Xcalmg\ |\ \text{Supp}(\phi) \subset K, \text{Supp} (\pvec) \subset N(K)\right\}.$$ Then one can define the following $\Xcalmg_K$-local norm, for all $\zeta\in \Xcalmg$,
\begin{equation} \label{local-norm}
|\zeta|_{+,K}=\sup_{\xi\in \Xcalmg_K, \|\xi\|_S\leq 1}d(\zeta,\xi).
\end{equation}

\begin{lemma}
\label{lemma:leandre_reconstruction}
Let $\zeta$ and $\zeta_h=(\pvec_h,\phi_h)$ be respectively the solution to~\eqref{eq:VF-3-PC} and~\eqref{eq:VF-1h-PC}. 
Let $\tilde{\zeta}_h=(\pvec_h,\tilde{\phi}_h){\in\Qududvec{}_h\times {\Vudud}}$ be a reconstruction of $\zeta_h$. We have for all $\xi{=(\qvec,\psi)}\in \Xcalmg$,
\begin{align}
d(\zeta-\tilde{\zeta}_h,\xi)
&= (S_f - \H^T\dive \pvec_h - \T_e\tilde{\phi}_h, \psi)_{0,\Omega}- (\T_o\pvec_h +\H\grad\tilde{\phi}_h , \mathbf{q})_{0,\Omega} \nonumber\\
&\quad {+(\H\tilde{\phi}_h- \tilde{\Gamma}_e(\pvec_h\cdot\nvec), (\qvec\cdot\nvec))_{0,\pa\Omega}}.
\label{eq:recons_leandre}
\end{align}
\end{lemma}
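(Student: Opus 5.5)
The identity follows from linearity of $d$ in its first argument: I compute $d(\zeta,\xi)$ and $d(\tilde{\zeta}_h,\xi)$ separately and subtract. Throughout, $\xi=(\qvec,\psi)\in\Xcalmg$ is fixed.

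\emph{Exact part.} By definition $d(\zeta,\xi)=c(\zeta,(-\qvec,\psi))$. Since $\Xcalmg$ is a vector space, $(-\qvec,\psi)\in\Xcalmg$ is an admissible test function in \eqref{eq:VF-3-PC}. Hence
\[
d(\zeta,\xi)=(S_f,\psi)_{0,\Omega}.
\]
No regularity of $\zeta$ beyond $\zeta\in\Xcalmg$ is needed here.

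\emph{Reconstructed part.} Expanding the definition with $\tilde{\zeta}_h=(\pvec_h,\tilde{\phi}_h)$ gives
\[
d(\tilde{\zeta}_h,\xi)=(\T_o\pvec_h,\qvec)_{0,\Omega}+(\T_e\tilde{\phi}_h,\psi)_{0,\Omega}+(\tilde{\Gamma}_e(\pvec_h\cdot\nvec),\qvec\cdot\nvec)_{0,\pa\Omega}+(\psi,\H^T\dive\pvec_h)_{0,\Omega}-(\tilde{\phi}_h,\H^T\dive\qvec)_{0,\Omega}.
\]
The only term that must be transformed is the last one. Since $\tilde{\phi}_h\in\Vudud=\Hudud^1(\Omega)$ and $\qvec\in\Qududvec(\Omega)$, I apply Green's formula componentwise, moving the constant matrix $\H$ across:
\[
(\tilde{\phi}_h,\H^T\dive\qvec)_{0,\Omega}=-(\H\grad\tilde{\phi}_h,\qvec)_{0,\Omega}+(\H\tilde{\phi}_h,\qvec\cdot\nvec)_{0,\pa\Omega}.
\]
This is the same integration by parts used to pass from \eqref{eq:VF-0-PC} to \eqref{eq:VF-1-PC}.

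The one point needing care is the boundary term. A priori it is an $H^{-1/2}$--$H^{1/2}$ duality pairing. Because $\qvec\in\Qududvec(\Omega)$ satisfies $\qvec\cdot\nvec\in L^2(\pa\Omega)$, and $\tilde{\phi}_h|_{\pa\Omega}\in H^{1/2}\subset L^2$, the pairing coincides with the $L^2(\pa\Omega)$ inner product. I would justify this by density of smooth fields, or simply cite the standard fact.

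\emph{Conclusion.} Substituting,
\[
d(\tilde{\zeta}_h,\xi)=(\T_e\tilde{\phi}_h+\H^T\dive\pvec_h,\psi)_{0,\Omega}+(\T_o\pvec_h+\H\grad\tilde{\phi}_h,\qvec)_{0,\Omega}+(\tilde{\Gamma}_e(\pvec_h\cdot\nvec)-\H\tilde{\phi}_h,\qvec\cdot\nvec)_{0,\pa\Omega}.
\]
Subtracting this from $(S_f,\psi)_{0,\Omega}$ gives exactly \eqref{eq:recons_leandre}, with the boundary residual $\H\tilde{\phi}_h-\tilde{\Gamma}_e(\pvec_h\cdot\nvec)$ appearing with the stated sign.

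The proof uses neither the discrete problem \eqref{eq:VF-1h-PC} nor $\pvec_h\in\Qududvec{}_h$, beyond $\pvec_h\in\Qududvec(\Omega)$. The main, and only mild, obstacle is the sign bookkeeping in the boundary term together with the trace-regularity justification above.
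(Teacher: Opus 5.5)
Your proof is correct and follows the paper's argument. Both use \eqref{eq:VF-3-PC} with the test function $(-\qvec,\psi)$ to get $d(\zeta,\xi)=(S_f,\psi)_{0,\Omega}$, expand $d(\tilde{\zeta}_h,\xi)$, and integrate by parts the term $(\tilde{\phi}_h,\H^T\dive\qvec)_{0,\Omega}$ using $\tilde{\phi}_h\in\Vudud$. Your remark on the boundary pairing is fine; it holds directly because $\qvec\cdot\nvec\in L^2(\pa\Omega)$ means the $H^{-1/2}$ functional is represented by an $L^2$ function, so no density argument is needed.
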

\begin{proof}
Let $\xi$ be in $\Xcalmg$. According to~\eqref{eq:VF-3-PC}, we have
\begin{align*}
d(\zeta-\tilde{\zeta}_h,\xi)
&= (S_f - \H^T\dive \pvec_h - \T_e\tilde{\phi}_h, \psi)_{0,\Omega}
- (\T_o\pvec_h  , \mathbf{q})_{0,\Omega}+ (\tilde{\phi}_h, \H^T\dive \mathbf{q})_{0,\Omega}
\nonumber\\
&\quad {- (\tilde{\Gamma_e}(\pvec_h\cdot\nvec), (\qvec\cdot\nvec))_{0,\pa\Omega}}.
\end{align*}
Using $\tilde{\phi}_h\in{\Vudud}$, we can integrate by part the third integral {to recover (\ref{eq:recons_leandre})}.
\end{proof}

\begin{definition}\label{definition:estimators}
{Let $\zeta_h=(\pvec_h,\phi_h)$ be the solution to~\eqref{eq:VF-1h-PC}}. Let $\tilde{\zeta}_h=(\pvec_h,\tilde{\phi}_h) \in{\Qududvec{}_h}\times {\Vudud}$ be a reconstruction of $\zeta_h$. For any $K\in \Tcal_h$, 
we define 
 \begin{eqnarray}
&{\mbox{the {\em residual estimator}:}}\hskip 3truemm
{\eta_{r,K}}=\|\delta_e^{-1/2} (S_f-\H^T\dive \pvec_h -\T_e\tilde{\phi}_h)\|_{0,K}\,,\label{eq:def_residual_estimator} \\
&{\mbox{resp., the {\em flux estimator}:}} \hskip 3truemm
{\eta_{f,K}} = \|\delta_o^{-1/2}(\T_o\pvec_h+\H\grad \tilde{\phi}_h)\|_{0,K},\label{eq:def_flux_estimator}
\end{eqnarray}
and, for any $F\in {\mathcal{F}_h^e}$, we define the {\em Robin boundary condition estimator}:
\begin{align}\label{eq:def_bc_estimator}
{}{\eta_{bc,F}} = (\delta^{max}_{o,K_F}h_{\perp F})^{-1/2}\|\tilde{\Gamma}_e^{-1/2}(\H\tilde{\phi}_h-\tilde{\Gamma}_e(\pvec_h\cdot\nvec))\|_{0,F}.
\end{align}
\end{definition}
\begin{theorem}[reliability]\label{theorem:norm+K}
{Let $\zeta$ be the solution to~\eqref{eq:VF-3-PC}. With the same notation as in definition~\ref{definition:estimators}}, one has the estimate
\begin{align}
&|\zeta-\tilde{\zeta}_h|_{+,K}\leq \left({\eta}^2_{r,K} +\sum_{K'\in N(K)}\eta^2_{f,K'}{+\sum_{F\in {\mathcal{F}_h^e}\cap \pa K}\eta_{bc,F}^2}\right)^{1/2}.\label{eq:rt_ineq_1}
\end{align}
\end{theorem}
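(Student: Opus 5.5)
The estimate follows directly from the error representation of Lemma~\ref{lemma:leandre_reconstruction}, combined with weighted Cauchy--Schwarz inequalities and the definition of the local norm \eqref{local-norm}. We fix $K\in\Tcal_h$ and take an arbitrary $\xi=(\qvec,\psi)\in\Xcalmg_K$ with $\|\xi\|_S\le 1$. Lemma~\ref{lemma:leandre_reconstruction} gives $d(\zeta-\tilde\zeta_h,\xi)$ as a sum of three terms: a volume residual term tested against $\psi$, a flux term tested against $\qvec$, and a boundary term tested against $\qvec\cdot\nvec$. By definition of $\Xcalmg_K$, $\psi$ is supported in $K$ and $\qvec$ in $N(K)$. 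The integrals therefore localize: the first to $K$, the second to the elements $K'\in N(K)$, and the third to the boundary facets where $\qvec\cdot\nvec$ may be nonzero.

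Each term is bounded by inserting the weights used in $\|\cdot\|_S$.
\begin{itemize}
\item For the residual term, we write $(R,\psi)_{0,K}=(\delta_e^{-1/2}R,\delta_e^{1/2}\psi)_{0,K}$ with $R=S_f-\H^T\dive\pvec_h-\T_e\tilde\phi_h$. Cauchy--Schwarz gives the bound $\eta_{r,K}\,(\delta_e\psi,\psi)_{0,K}^{1/2}$. Here $\delta_e$ is diagonal with positive entries by \eqref{Pos-SPN}, so the splitting is legitimate.
\item For the flux term, we split similarly with $\delta_o^{\pm1/2}$ on each $K'\in N(K)$. This gives $\sum_{K'}\eta_{f,K'}(\delta_o\qvec,\qvec)_{0,K'}^{1/2}$.
\item For the boundary term on a facet $F$, we write
\[(\H\tilde\phi_h-\tilde\Gamma_e(\pvec_h\cdot\nvec),\qvec\cdot\nvec)_{0,F}=\big((\delta^{max}_{o,K_F}h_{\perp F})^{-1/2}\tilde\Gamma_e^{-1/2}(\cdots),\,(\delta^{max}_{o,K_F}h_{\perp F})^{1/2}\tilde\Gamma_e^{1/2}(\qvec\cdot\nvec)\big)_{0,F},\]
using that $\tilde\Gamma_e$ is symmetric positive definite. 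This is bounded by $\eta_{bc,F}$ times the corresponding facet contribution to $\|\xi\|_S$.
\end{itemize}
A final discrete Cauchy--Schwarz over all contributions yields
\[d(\zeta-\tilde\zeta_h,\xi)\le\Big(\eta_{r,K}^2+\sum_{K'\in N(K)}\eta_{f,K'}^2+\sum_F\eta_{bc,F}^2\Big)^{1/2}\|\xi\|_S.\]
Here we use that $(\delta_e\psi,\psi)+(\delta_o\qvec,\qvec)+\sum_F(\ldots)\le\|\xi\|_S^2$, since the divergence term in $\|\cdot\|_S$ is nonnegative and can simply be dropped. Taking the supremum over $\xi$ gives \eqref{eq:rt_ineq_1}.

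The main obstacle is matching the boundary facets appearing in the estimate with those that actually contribute. $\qvec$ is supported on the patch $N(K)$, so $\qvec\cdot\nvec$ can be nonzero on exterior facets of neighbours $K'$, not only on $\mathcal{F}^e_h\cap\partial K$. The statement keeps only facets of $\partial K$, so I would need to check how $\Xcalmg_K$ constrains the boundary trace. The first thing to try is to read the support condition together with the structure of the test space as forcing $\qvec\cdot\nvec$ to vanish on $\partial\Omega\setminus\partial K$. If that fails, the fallback is to enlarge the boundary sum to the exterior facets of the elements of $N(K)$, which leaves the argument above unchanged. The remaining steps are routine.
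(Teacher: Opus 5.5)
Your argument (the representation of Lemma~\ref{lemma:leandre_reconstruction}, then Cauchy--Schwarz with the weights $\delta_e^{\pm1/2}$, $\delta_o^{\pm1/2}$ and $(\delta^{max}_{o,K_F}h_{\perp F})^{\pm1/2}\tilde{\Gamma}_e^{\pm1/2}$, dropping the divergence part of $\|\cdot\|_S$, and taking the supremum) is exactly the paper's proof. The paper, however, applies Cauchy--Schwarz only on the facets $F\in\mathcal{F}^e_h\cap\partial K$. It never justifies discarding the other exterior facets of the patch, so the obstacle you flag is a genuine gap in the paper's own proof, not something you missed.

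Your first remedy cannot close this gap. The space $\Xcalmg_K$ only constrains supports. Suppose a neighbour $K'\in N(K)$ has an exterior facet $F'\not\subset\partial K$. A face bubble on $F'$ of the type $\psi_{F'}\qvec_{bc}$ (as in the proof of Theorem~\ref{thm_apost_error_est}) is supported in $K'$, lies in $\Xcalmg_K$, and has $\qvec\cdot\nvec\neq0$ on $F'$.

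In fact \eqref{eq:rt_ineq_1} is false in general. Take such a $K$ with no exterior facet of its own, a constant $c\neq0$, and a continuous piecewise polynomial $\chi$ equal to $1$ on every element of $N(K)$. Replace $\tilde\phi_h$ by $\tilde\phi_h+t\,c\,\chi$; this is still admissible, since the statement allows any $\tilde\phi_h\in\Vudud$.
\begin{itemize}
\item Right-hand side: the flux estimators on $N(K)$ are unchanged, and there is no boundary estimator. Moreover $\eta_{r,K}\le tA+O(1)$ with $A=\|\delta_e^{-1/2}\T_e c\|_{0,K}$.
\item Left-hand side: let $\xi$ have as $\qvec$-part a face bubble of $K'$ on $F'$. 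By Lemma~\ref{lemma:leandre_reconstruction}, $d(\zeta-\tilde\zeta_h,\xi)$ gains $t\,[-(\T_e c,\psi)_{0,K}+(\H c,\qvec\cdot\nvec)_{0,F'}]$.
\item Take $\psi$ a negative multiple of $\delta_e^{-1}\T_e c$ on $K$, and split the unit $\|\cdot\|_S$-budget between $\psi$ and the bubble. This makes the bracket equal to $\sqrt{A^2+B^2}$ for some $B>0$.
\item Hence $|\zeta-\tilde\zeta_h|_{+,K}\ge t\sqrt{A^2+B^2}-O(1)$, which exceeds $tA+O(1)$ for $t$ large.
\end{itemize}

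What can actually be proved is your fallback. Take the boundary sum over $\bigcup_{K'\in N(K)}(\mathcal{F}^e_h\cap\partial K')$. Your argument then goes through verbatim, since all these facets appear in $\|\cdot\|_S$. The extra terms remain locally efficient, because the test fields $\xi_{f,K_{F'}}$ and $\xi_{bc,F'}$ of Theorem~\ref{thm_apost_error_est} lie in $\Xcalmg_K$ when $K_{F'}\in N(K)$.

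Alternatively, keep \eqref{eq:rt_ineq_1} as written but add the constraint $\qvec\cdot\nvec=0$ on $\partial\Omega\setminus\partial K$ to the definition of $\Xcalmg_K$. Your proof then applies unchanged. The efficiency proofs survive, because $\xi_{r,K}$, $\xi_{f,K}$ and $\xi_{bc,F}$ (with $F\subset\partial K_F$) all satisfy that constraint.

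The same correction is needed in the indicator \eqref{LocalIndicator_mono} and in Theorems~\ref{theorem:ddm_norm+K} and~\ref{theorem:Mixed_norm+K}.
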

\begin{proof}
According to Lemma~\ref{lemma:leandre_reconstruction}, we have
\begin{align*}
d(\zeta-\tilde{\zeta}_h,\xi)
&= (S_f - \H^T\dive \pvec_h - \T_e\tilde{\phi}_h ,\psi)_{0,\Omega} - (\T_o\pvec_h +\H\grad\tilde{\phi}_h , \mathbf{q})_{0,\Omega}\\
&\quad {+(\H\tilde{\phi}_h- \tilde{\Gamma}_e(\pvec_h\cdot\nvec), (\qvec\cdot\nvec))_{0,\pa\Omega}}.
\end{align*}
Let  $K\in\Tcal_h$ and $\xi=(\qvec,\psi)\in \Xcalmg$ be such that $\text{Supp}(\psi) \subset K, \text{ Supp} (\qvec) \subset N(K)$. Applying Cauchy-Schwarz inequalities {successively in $\Ludud^2(K)$, $\Ludud^2(K')$ for $K'\in N(K)$, $\Ludud^2(F)$ for $F\in {\mathcal{F}_h^e}\cap \pa K$ and {finally in $\mathbb{R}^{m}$ (for ad hoc $m$)}, we get
\begin{align*}
&\qquad d(\zeta-\tilde{\zeta}_h,\xi)\\
&\leq {\eta}_{r,K}\|\delta_e^{1/2}\psi\|_{0,K}  + \sum_{K'\in N(K)} \eta_{f,K'}\|\delta_o^{1/2}\mathbf{q}\|_{0,K'}{+\sum_{F\in {\mathcal{F}_h^e}\cap \pa K}{\eta_{bc,F}}\|\tilde{\Gamma}_e^{1/2}(\qvec\cdot\nvec)\|_{0,F}}\nonumber\\
&\leq  \left({\eta}_{r,K}^2+\sum_{K'\in N(K)} \eta_{f,K'}^2{+\sum_{F\in {\mathcal{F}_h^e}\cap \pa K}\eta_{bc,F}^2}\right)^{1/2}\\
&\qquad \times  { \scriptstyle \left(\|\delta_e^{1/2}\psi\|_{0,K}^2  + \sum_{K'\in N(K)}\|\delta_o^{1/2}\mathbf{q}\|_{0,K'}^2{+\sum_{F\in {\mathcal{F}_h^e}\cap \pa K} \delta^{max}_{o,K_F}h_{\perp F}\|\tilde{\Gamma}_e^{1/2}(\qvec\cdot\nvec)\|_{0,F}^2}\right)^{1/2}.}\end{align*}
We infer~\eqref{eq:rt_ineq_1} from the definition  of the $|\cdot |_{+,K}$ norm \eqref{local-norm}}.
\end{proof}

\begin{remark}\label{rmk:diffusion_multigroup}
The reliability estimate for the multigroup neutron diffusion equation~\cite{CDM25} may be explicitly stated since it corresponds to the specific case where $\Nhat=1$, $\Gamma_e=\frac{1}{2}$ and $\T_o^g=\frac{1}{D^g}$ for all $g\in\Ical_G$ with $D^g$ the scalar-valued diffusion coefficient of the energy group $g$. Notice that ${\T_o\in\R^{G\times G}}$ is a diagonal matrix in this case. Denoting ${\D= (\T_o)^{-1}\in\R^{G\times G}}$ the diffusion matrix then, for any $K\in \Tcal_h$ and any $F\in {\mathcal{F}_h^e}$, the estimators write 
\begin{align*}
&{\eta_{r,K}}=\|\delta_e^{-1/2} (S_f-\dive \pvec_h -\T_e\tilde{\phi}_h)\|_{0,K}\,, \\
&{\eta_{f,K}} = \|\D^{1/2}(\D^{-1}\pvec_h+\grad \tilde{\phi}_h)\|_{0,K}, \\
&{\eta_{bc,F}} = (\delta^{max}_{o,K_F}h_{\perp F})^{-1/2} \|\frac{1}{\sqrt{2}}(\tilde{\phi}_h-2(\pvec_h\cdot\nvec))\|_{0,F}.
\end{align*}
\end{remark}

%
{
\begin{theorem}
[efficiency]
\label{thm_apost_error_est}
Let Assumption~\ref{assumption:locality_polynomial} be fulfilled. 
For $K \in \Tcal_h$, let ${\eta}_{r,K}$ and ${\eta}_{f,K}$ be the residual and flux estimators respectively given by~\eqref{eq:def_residual_estimator}, and~\eqref{eq:def_flux_estimator}. The following estimates hold true
\begin{align}
{\eta}_{r,K}&\leq \mathtt{c}\,{ \left(\frac{\delta^{max}_{e,K}}{\delta^{min}_{e,K}}\right)^{1/2}}\,|\zeta-\tilde{\zeta}_h|_{+,K}
, \label{eq:norm+_local_efficiency_eta_r}\\
{{\eta}_{f,K}}&{\leq { \mathtt{C} \left(\frac{\delta^{max}_{o,K}}{\delta^{min}_{o,K}}\right)^{1/2}}\,|\zeta-\tilde{\zeta}_h|_{+,K}}
,\label{eq:norm+_local_efficiency_eta_f}
\end{align}
where {$\mathtt{c}$ and $\mathtt{C}$ are constants which depend} only on the polynomial degree of $S_f$, $\T_o$, $\T_e$  {and $\tilde{\phi}_h$}, $d$, and the shape-regularity parameter $\kappa_K$. \\
{For $F \in \mathcal{F}^e_h$, let ${\eta}_{bc,F}$ be the Robin boundary condition estimator given by~\eqref{eq:def_bc_estimator}. 
The following estimate holds true
\begin{align}
{{\eta}_{bc,F}}&{\leq \mathsf{C} \,|\zeta-\tilde{\zeta}_h|_{+,K_F}}
,\label{eq:norm+_local_efficiency_eta_bc}
\end{align}
where $h_{\perp F}$ is the size of the $F$-transverse part of the {mesh element $K_F$} containing $F$ in its facets, $\mathsf{C}$ is a constant which depends only on the polynomial degree of $S_f$,  $\T_e$  {and $\tilde{\phi}_h$}, $d$, and the shape-regularity parameter $\kappa_{K_F}$.
}
\end{theorem}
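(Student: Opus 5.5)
The plan is the standard bubble-function technique (Verfürth) in the form used in~\cite{CDM23}. We test the error identity of Lemma~\ref{lemma:leandre_reconstruction} with well-chosen $\xi\in\Xcalmg_K$, then compare $d(\zeta-\tilde{\zeta}_h,\xi)\le |\zeta-\tilde{\zeta}_h|_{+,K}\,\|\xi\|_S$ with the estimator. Under Assumption~\ref{assumption:locality_polynomial}, the residuals $R_K:=S_f-\H^T\dive\pvec_h-\T_e\tilde{\phi}_h$ and $\mathbf{R}_K:=\T_o\pvec_h+\H\grad\tilde{\phi}_h$ are piecewise polynomials of bounded degree. The boundary residual $r_F:=\H\tilde{\phi}_h-\tilde{\Gamma}_e(\pvec_h\cdot\nvec)$ is also polynomial on $F$, since $\tilde{\Gamma}_e$ is a constant matrix. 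This polynomial character gives us norm equivalences with bubble-weighted norms.

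For \eqref{eq:norm+_local_efficiency_eta_r}, let $b_K$ be the element bubble and take $\xi=(0,\psi)$ with $\psi=b_K\,\delta_e^{-1}R_K$. Here $\delta_e^{-1}$ could be replaced by its local constant surrogate $(\delta^{min}_{e,K})^{-1}$ so that $\psi$ stays polynomial. Then $\mathrm{Supp}\,\psi\subset K$, and Lemma~\ref{lemma:leandre_reconstruction} gives $d(\zeta-\tilde{\zeta}_h,\xi)=(R_K,\psi)_{0,K}\gtrsim (\delta^{max}_{e,K})^{-1}\|R_K\|_{0,K}^2$ by the inverse inequality $\|b_K^{1/2}v\|_{0,K}\gtrsim\|v\|_{0,K}$ on polynomials. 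On the other side, $\|\xi\|_S^2=(\delta_e\psi,\psi)_{0,K}\le \delta^{max}_{e,K}(\delta^{min}_{e,K})^{-2}\|R_K\|_{0,K}^2$. Combining the two, and using $\eta_{r,K}^2\le(\delta^{min}_{e,K})^{-1}\|R_K\|_{0,K}^2$, we get the claim with the ratio $(\delta^{max}_{e,K}/\delta^{min}_{e,K})^{1/2}$, after balancing the weights.

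For \eqref{eq:norm+_local_efficiency_eta_f}, take $\xi=(\qvec,0)$ with $\qvec=-b_K\,\mathbf{R}_K$. Since $b_K$ vanishes on $\partial K$, this field lies in $\Qududvec(\Omega)$ with support in $K\subset N(K)$, and its normal trace on $\pa\Omega$ is zero. The estimate $d\gtrsim\|\mathbf{R}_K\|_{0,K}^2$ follows as before. The new ingredient is the divergence term of $\|\xi\|_S$: $\delta^{max}_{o,K}h_K^2\|\dive\qvec\|_{0,K}^2\lesssim \delta^{max}_{o,K}\|\mathbf{R}_K\|_{0,K}^2$ by the inverse inequality $\|\dive\qvec\|_{0,K}\lesssim h_K^{-1}\|\qvec\|_{0,K}$. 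This is exactly why the $h_K^2$ weighting appears in \eqref{strenghtened-norm}. Together with $(\delta_o\qvec,\qvec)\le\delta^{max}_{o,K}\|\mathbf{R}_K\|^2$ and $\eta_{f,K}^2\le(\delta^{min}_{o,K})^{-1}\|\mathbf{R}_K\|^2$, this yields the stated ratio.

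For \eqref{eq:norm+_local_efficiency_eta_bc}, which I expect to be the main obstacle, we need a $\qvec$ whose normal trace on $F$ reproduces $r_F$ while staying controlled in $\|\cdot\|_S$. I would take $\qvec=\tilde{\Gamma}_e^{-1}\,\ell_F(r_F)\,b_F\,\mathbf{t}_F$, where $b_F$ is the face bubble supported in $K_F$, $\ell_F$ is a polynomial extension constant in the direction normal to $F$, and $\mathbf{t}_F$ is the constant vector $(\xvec-a_F)/h_{\perp F}$ from the vertex $a_F$ opposite $F$. The vector $\mathbf{t}_F$ satisfies $\mathbf{t}_F\cdot\nvec=1$ on $F$ and $\mathbf{t}_F\cdot\nvec=0$ on the other faces of $K_F$. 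Hence $\qvec\in\Qududvec(\Omega)$, its support lies in $K_F$, and its normal trace vanishes on $\pa\Omega\setminus F$; we set $\psi=0$. Then $d=(r_F,\qvec\cdot\nvec)_{0,F}\gtrsim\|\tilde{\Gamma}_e^{-1/2}r_F\|_{0,F}^2$. The scalings $\|\qvec\|_{0,K_F}\lesssim h_{\perp F}^{1/2}\|r_F\|_{0,F}$ and $\|\dive\qvec\|_{0,K_F}\lesssim h_{\perp F}^{-1/2}\|r_F\|_{0,F}$ give $\|\xi\|_S^2\lesssim\delta^{max}_{o,K_F}h_{\perp F}\|\tilde{\Gamma}_e^{-1/2}r_F\|_{0,F}^2$, where the boundary term of $\|\cdot\|_S$ carries the same weight. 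Dividing out gives $\eta_{bc,F}\le\mathsf{C}|\zeta-\tilde{\zeta}_h|_{+,K_F}$. The delicate points are verifying that every term of $\|\xi\|_S$ scales exactly like $\delta^{max}_{o,K_F}h_{\perp F}$, which requires shape regularity for $h_K\simeq h_{\perp F}$, and checking that no coefficient ratio enters, since $\tilde{\Gamma}_e$ is constant. On Cartesian meshes, $\mathbf{t}_F$ is replaced by the normal coordinate vector with a one-sided bubble.
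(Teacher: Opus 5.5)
Your bubble arguments for $\eta_{r,K}$ and $\eta_{f,K}$ follow the paper. The proof of \eqref{eq:norm+_local_efficiency_eta_bc}, however, has a genuine gap. Your field $\qvec$ is supported in $K_F$ and is nonzero in its interior, so Lemma~\ref{lemma:leandre_reconstruction} with $\xi=(\qvec,0)$ gives
\[
d(\zeta-\tilde{\zeta}_h,\xi)=-(\mathbf{R}_{K_F},\qvec)_{0,K_F}+(r_F,\qvec\cdot\nvec)_{0,F},
\]
and not $d=(r_F,\qvec\cdot\nvec)_{0,F}$. The flux residual $\mathbf{R}_{K_F}=\T_o\pvec_h+\H\grad\tilde{\phi}_h$ is in general nonzero; it is exactly what $\eta_{f,K_F}$ measures. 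So bounding $\|\xi\|_S$ alone only controls the difference of these two terms, not $\|\tilde{\Gamma}_e^{-1/2}r_F\|_{0,F}^2$.

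The missing step is to absorb the volume term using the flux part of the theorem, taken in its unweighted form. Before dividing by $\delta^{min}_{o,K}$, your flux bubble argument gives $\|\mathbf{R}_{K_F}\|_{0,K_F}\lesssim(\delta^{max}_{o,K_F})^{1/2}|\zeta-\tilde{\zeta}_h|_{+,K_F}$; this is \eqref{estimate_on_qf} in the paper. Combined with your scaling $\|\qvec\|_{0,K_F}\lesssim h_{\perp F}^{1/2}\|r_F\|_{0,F}$, it gives $|(\mathbf{R}_{K_F},\qvec)_{0,K_F}|\lesssim(\delta^{max}_{o,K_F}h_{\perp F})^{1/2}|\zeta-\tilde{\zeta}_h|_{+,K_F}\|r_F\|_{0,F}$. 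This has the same form as your bound on $|\zeta-\tilde{\zeta}_h|_{+,K_F}\|\xi\|_S$, so adding the two and dividing out yields \eqref{eq:norm+_local_efficiency_eta_bc}, which is precisely the paper's route. Going through $\eta_{f,K_F}$ itself would instead put $(\delta^{max}_{o,K_F}/\delta^{min}_{o,K_F})^{1/2}$ into $\mathsf{C}$. With the unweighted bound no ratio appears, although $\mathsf{C}$ then also involves the flux-bubble constants, hence the degree of $\T_o$, as in the paper's proof.

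There are also two smaller slips. In the residual step, the bounds you write, $(R_K,\psi)_{0,K}\gtrsim(\delta^{max}_{e,K})^{-1}\|R_K\|_{0,K}^2$ and $\|\xi\|_S^2\le\delta^{max}_{e,K}(\delta^{min}_{e,K})^{-2}\|R_K\|_{0,K}^2$, combine to the ratio raised to the power $3/2$, not $1/2$. To get $1/2$, use the sharp lower bound $(\delta^{min}_{e,K})^{-1}$ available for the constant surrogate, or simply test with $b_KR_K$ as the paper does. Second, $(\xvec-a_F)/h_{\perp F}$ is affine, not constant. Since $b_F$ already kills the normal trace on $\partial K_F\setminus F$, the constant field $\nvec_F$ would serve just as well.
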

\begin{proof}
{ The first part of the proof is similar to that of~\cite[Theorem 5.7]{CDM23}. 
{
Let $\psi_K$ be the bubble function on $K$: {if $K$ is a simplex, it is}
given as the product of the $d+1$ linear functions that take the value 1 at one vertex of $K$
and vanish at the other vertices; {if $K$ is a rectangle or a cuboid}, it is given as the product of the $2d$ linear functions that take the value 1 on one face $F\subset \partial K$ and vanish on the opposite face.
Let $\psi_r = (S_f-\H^T\dive \pvec_h-\T_e\widetilde{\phi}_h)$.
Note that $\psi_r$ is a polynomial in $K$, because each term appearing in its definition is a polynomial (thanks to Assumption~\ref{assumption:locality_polynomial} for $S_f$, $\T_e$ {and $\widetilde{\phi}_h$}). Then the equivalence of norms on finite-dimensional spaces, { the definition of $\psi_K$} and the inverse inequality (cf., e.g., \cite[Theorem 3.2.6]{ciarlet2002finite}) respectively give 
\begin{align}
&{c_\psi}\|\psi_r\|_{0,K}^2\leq (\psi_r,\psi_K\psi_r)_{0,K},\label{eq:norm+_ineq_5}\\
&\|\psi_K \psi_r\|_{0,K}\leq \|\psi_r\|_{0,K},\label{eq:norm+_ineq_6}
\end{align}
with the constant ${c_\psi}$ depending only on the polynomial degree of $S_f$, { $\T_e$}  {and $\tilde{\phi}_h$}, $d$, and $\kappa_K$.
}\\
Now, let $\xi_{r,K}=(0,\psi_K\psi_r)$ in $K$, and $0$ elsewhere: as we observed previously, $\xi_{r,K}\in\Xcal$. Then we have, by the definition of the bilinear form $d$ and of $\zeta$
\begin{align*}
d(\zeta-\tilde{\zeta_h},\xi_{r,K})=(\psi_r,\psi_K\psi_r)_{0,K}.
\end{align*}
{ Since the support of $\xi_{r,K}$ is equal to $K$, one has actually $\xi_{r,K}\in\Xcal_K$. So,} by definition \eqref{local-norm} of the strengthened $|\cdot|_{+,K}$ norm},
\begin{align}
d(\zeta-\tilde{\zeta}_h,\xi_{r,K})&\leq |\zeta-\tilde{\zeta}_h|_{+,K} \|\xi_{r,K}\|_S\nonumber\\
&\leq|\zeta-\tilde{\zeta}_h|_{+,K}\|\delta_e^{1/2} \psi_K \psi_r\|_{0,K}.
\label{eq:norm+_ineq_4}
\end{align}
Combining~\eqref{eq:norm+_ineq_5},~\eqref{eq:norm+_ineq_6} and~\eqref{eq:norm+_ineq_4}, one comes to
\begin{align*}
{c_\psi}\|\psi_r\|_{0,K}^2\leq |\zeta-\tilde{\zeta}_h|_{+,K}\|\psi_r\|_{0,K}{(\delta^{max}_{e,K})^{1/2}}.
\end{align*}
Using the definition of $\eta_{r,K} $ by~\eqref{eq:def_residual_estimator} concludes
the proof of~\eqref{eq:norm+_local_efficiency_eta_r}: 
\begin{align*}
\eta_{r,K}{ \leq (\delta^{min}_{e,K})^{-1/2}\|\psi_r\|_{0,K}\leq  \frac{1}{{c_\psi}}\left(\frac{\delta^{max}_{e,K}}{\delta^{min}_{e,K}}\right)^{1/2} |\zeta-\tilde{\zeta}_h|_{+,K}}.
\end{align*}
We now proceed similarly for the second estimate.
Let us denote ${\qvec_f} = \T_o\pvec_h +\H\grad\tilde{\phi}_h$
on a given $K \in \Tcal_h$. Note that $\qvec_f$ is a polynomial in $K$ (thanks to Assumption~\ref{assumption:locality_polynomial} for $\T_o$ {and $\widetilde{\phi}_h$}). Then the
equivalence of norms on finite-dimensional spaces, { the definition of $\psi_K$} and the inverse inequality (cf., e.g., \cite[Theorem 3.2.6]{ciarlet2002finite})
give
\begin{align}
&{c_q}\|\qvec_f\|_{0,K}^2\leq (\qvec_f,\psi_K\qvec_f)_{0,K},\label{eq:norm+_eta_f_ineq_5}\\
&\|\psi_K \qvec_f\|_{0,K}\leq \|\qvec_f\|_{0,K},\label{eq:norm+_eta_f_ineq_6}\\
& \|\dive (\psi_K \qvec_f)\|_{0,K} \leq {C_d}\,h_K^{-1}\|\psi_K \qvec_f\|_{0,K},\label{eq:norm+_eta_f_ineq_7}
\end{align}
with the constants ${c_q}$ and ${C_d}$ depending only on the { polynomial degree of $\T_o$} {and $\tilde{\phi}_h$}, $d$, and
$\kappa_K$. \\
{ Let ${\xi_{f,K}}=(\psi_K\qvec_f,0)$ in $K$, and $0$ elsewhere. We observe that $\psi_K\qvec_f$ is smooth in $K$ (a closed subset of $\mathbb{R}^d$), and moreover that $(\psi_K\qvec_f)_{|\partial K}=0$ thanks to the definition of $\psi_K$. Hence, $\xi_{f,K}\in\Xcal_K$.} According to Lemma~\ref{lemma:leandre_reconstruction}
\begin{align*}
-d(\zeta-\tilde{\zeta_h},\xi_{f,K})&=(\T_o\pvec_h + \H\grad\tilde{\phi}_h , \psi_K\qvec_f)_{0,K} \\
&= (\qvec_f, \psi_K\qvec_f)_{0,K}. 
\end{align*}
By definition \eqref{local-norm} of the $|\cdot|_{+,K}$ norm, if now follows that
\begin{align}
&\quad 
-d(\zeta-\tilde{\zeta}_h,\xi_{f,K}) \nonumber\\
&\leq |\zeta-\tilde{\zeta}_h|_{+,K}\|\xi_{f,K}\|_S\nonumber\\
&\leq|\zeta-\tilde{\zeta}_h|_{+,K}
 \Big\{\|\delta_o^{1/2}(\psi_K \qvec_f)\|_{0,K}^2 +\delta^{max}_{o,K}h_K^2 \|\dive (\psi_K \qvec_f)\|_{0,K}^2 \Big\}^{1/2}\nonumber\\
&\leq|\zeta-\tilde{\zeta}_h|_{+,K}{ (\delta^{max}_{o,K})^{1/2}}
\Big\{  \|(\psi_K \qvec_f)\|_{0,K}^2 + { h_K^2} \| \dive (\psi_K \qvec_f)\|_{0,K}^2 \Big\}^{1/2}\nonumber\\
&\leq|\zeta-\tilde{\zeta}_h|_{+,K}
 { (\delta^{max}_{o,K})^{1/2}\{1+{C_d^2}\}^{1/2}}\|(\psi_K \qvec_f)\|_{0,K},\label{eq:norm+_eta_f_ineq_4}
\end{align}
{where we used the inverse inequality \eqref{eq:norm+_eta_f_ineq_7} to reach the last line.}
Combining~\eqref{eq:norm+_eta_f_ineq_5},~\eqref{eq:norm+_eta_f_ineq_6}
and~\eqref{eq:norm+_eta_f_ineq_4}, one comes to
\begin{align}\label{estimate_on_qf}
{c_q}\|\qvec_f{\|}_{0,K}^2\leq |\zeta-\tilde{\zeta}_h|_{+,K}\|\qvec_f\|_{0,K}{ (\delta^{max}_{o,K})^{1/2}\{1+{C_d^2}\}^{1/2}}.
\end{align}
Considering the definition of $\eta_{f,K} $ by~\eqref{eq:def_flux_estimator} concludes
the proof of the second estimate. \\
We finally prove the third estimate. Let $e_h=\H\tilde{\phi}_h- \tilde{\Gamma}_e(\pvec_h\cdot\nvec)\in L^2(\partial\Omega)$. 
Given $F \in \mathcal{F}_h^e$ associated to $K_F\in\Tcal_h$,  we build a "bubble" function $\psi_F$ vanishing not on the whole of $\pa {K_F}$, but only on $\overline{\partial {K_F} \setminus F}$. \\
{\small First, in the case where ${K_F}$ is a rectangle or a cuboid, the bubble function is defined as
\[ \psi_F=c_F\prod_{F'\subset\mathcal{F}_h\cap(\partial {K_F} \setminus F)} \lambda_{F'}, \] where 
$c_F>0$ and for all faces $F'\in \mathcal{F}_h\cap \partial {K_F}$, $ \lambda_{F'}(\xvec)=-\frac{(\xvec-\yvec_{F'})\cdot\nvec_{F'}}{h_{\perp F'}}$, with $\yvec_{F'}\in F'$ given. Note that for any face $F'\in \mathcal{F}_h\cap \partial {K_F}$, we have for all $\xvec\in \overline{{K_F}}$, $0\leq \lambda_{F'}(\xvec)\leq 1$, and for all $\xvec\in F'$, $\lambda_{F'}(\xvec)=0$.
For all $\xvec\in \overline{{K_F}}$, we also denote ${\qvec_{bc}}(\xvec)= e_h(\xvec+\lambda_F(\xvec)h_{\perp F}\nvec_F)\boldsymbol{\rho}_{{K_F},F}(\xvec) $ where $\boldsymbol{\rho}_{{K_F},F}(\xvec) = (1 - \lambda_F(\xvec))\nvec_F$.
 \\
Second, in the case where ${K_F}$ is a simplex, let us introduce $(\lambda_i)_{i=0,..,d}$ the normalized barycentric coordinates associated to ${K_F}$ and assume, without loss of generality, that the face $F$ is characterized by $\lambda_0=0$ and we index the other faces $(F_i)_{1\leq i\leq d}$ in $(\mathcal{F}_h\cap \partial {K_F})\setminus F$\,; finally $\xvec_i$ is the position of the vertex opposite to the face $F_i$. The bubble function is now defined as, 
\[ \psi_F=c_F\prod_{i=1}^d \lambda_{i}, \]  
with $c_F>0$. For all $\xvec\in \overline{{K_F}}$, we now denote
${\qvec_{bc}}(\xvec)= e_h(\sum_{i=1}^{d-1}\lambda_i(\xvec)\xvec_i +(\lambda_0(\xvec)+\lambda_d(\xvec))\xvec_d)\boldsymbol{\rho}_{{K_F},F}(\xvec) $ 
where $\boldsymbol{\rho}_{{K_F},F}(\xvec)  = (1-\lambda_0(\xvec) )\nvec_F$. We note that, for all $\xvec \in {K_F}$, the sum $\sum_{i=1}^{d-1}\lambda_i(\xvec)\xvec_i +(\lambda_0(\xvec)+\lambda_d(\xvec))\xvec_d$ actually belongs to $F$, with $(\lambda_1(\xvec),$ $\dots,$ $\lambda_{d-1}(\xvec),$ $\lambda_0(\xvec)+\lambda_d(\xvec))$ acting as barycentric coordinates there.} \\
In both cases, the constant $c_F$ is chosen such that $\|\psi_F\|_{L^\infty({K_F})}=1$.  Note that $\qvec_{bc}$ and $e_h$ are polynomials 
 respectively in ${K_F}$ and $F$, {moreover it holds that ${\qvec_{bc}}\cdot\nvec_{|F}(\xvec)= e_h(\xvec)$ for all $\xvec\in F$}. Then the
equivalence of norms on finite-dimensional spaces, the definition of $\psi_F$ and the inverse inequality (cf., e.g., \cite[Theorem 3.2.6]{ciarlet2002finite})
give
\begin{align}
&\|\psi_F \qvec_{bc}\|_{0,{K_F}}\leq \|\qvec_{bc}\|_{0,{K_F}},\label{eq:norm+_eta_f_ineq_6.1}\\
& \|\dive (\psi_F\qvec_{bc})\|_{0,{K_F}} \leq {C'_d}\,h_{K_F}^{-1}\|\psi_F \qvec_{bc}\|_{0,{K_F}},\label{eq:norm+_eta_f_ineq_7.1}\\
&{c_F}\|e_h\|_{0,F}^2\leq (e_h,\psi_F e_h)_{0,F},\label{eq:norm+_eta_f_ineq_5.1.F}\\
&\|\psi_F e_h\|_{0,F}\leq \|e_h\|_{0,F},\label{eq:norm+_eta_f_ineq_6.1.F}
\end{align}
with the constants ${c_F}$ and ${C'_d}$ depending only on the polynomial degree of $\tilde{\phi}_h$, $d$, and
$\kappa_{K_F}$. \\
Let ${\xi_{bc,F}}=(\psi_F\qvec_{bc},0)$ in ${K_F}$, and $0$ elsewhere. {By construction}, $\psi_F\qvec_{bc}$ is smooth in ${K_F}$ (a closed subset of $\mathbb{R}^d$).
Hence, $\xi_{bc,F}\in\Xcal_{K_F}$.
According again to Lemma~\ref{lemma:leandre_reconstruction}, {still using the notation ${\qvec_f} = \T_o\pvec_h +\H\grad\tilde{\phi}_h$}
\begin{align}
d(\zeta-\tilde{\zeta_h},\xi_{bc,F})&=-(\T_o\pvec_h + \H\grad\tilde{\phi}_h , \psi_F\qvec_{bc})_{0,{K_F}}{+(e_h, \psi_F(\qvec_{bc}\cdot\nvec))_{0,\pa\Omega}} \nonumber\\
&= -(\qvec_f, \psi_F\qvec_{bc})_{0,{K_F}}{+(e_h, \psi_F e_h)_{0,F}}.\label{eq:void_equality}
\end{align}
By definition \eqref{local-norm} of the $|\cdot|_{+,{K_F}}$ norm, it now follows that
\begin{align*}
&\qquad d(\zeta-\tilde{\zeta}_h,\xi_{bc,F})\\
&\leq |\zeta-\tilde{\zeta}_h|_{+,{K_F}}\|\xi_{bc,F}\|_S\nonumber\\
&\leq|\zeta-\tilde{\zeta}_h|_{+,{K_F}}
 \Big\{\|\delta_o^{1/2}(\psi_F \qvec_{bc})\|_{0,{K_F}}^2 + \delta^{max}_{o,{K_F}}h_{K_F}^2\|\dive (\psi_{F} \qvec_{bc})\|_{0,{K_F}}^2\nonumber \\
 & \hskip 27truemm +\sum_{F'\in{\mathcal{F}^e_h\cap\pa {K_F}}} \delta^{max}_{o,{K}_{F'}}h_{\perp F'}\|\tilde{\Gamma}_e^{1/2}(\psi_F \qvec_{bc}\cdot\nvec)\|_{0,F'}^2
  \Big\}^{1/2}\nonumber\\
&\leq|\zeta-\tilde{\zeta}_h|_{+,{K_F}}{ }
\Big\{  \delta^{max}_{o,{K_F}}(\|(\psi_F \qvec_{bc})\|_{0,{K_F}}^2 + { h_{K_F}^2} \| \dive (\psi_{F} \qvec_{bc})\|_{0,{K_F}}^2) \\
&\hskip 27truemm + \delta^{max}_{o,{K}_F}h_{\perp F} \|\tilde{\Gamma}_e^{1/2}(\psi_{F}{e_h})\|_{0,F}^2
 \Big\}^{1/2}\nonumber\\
&\leq|\zeta-\tilde{\zeta}_h|_{+,{K_F}}
 { \Big\{\delta^{max}_{o,{K_F}}\{1+{(C'_d)}^2\}}\|\qvec_{bc}\|_{0,{K_F}}^2 + \delta^{max}_{o,{K}_F}h_{\perp F}(\tilde{\Gamma}_e)^*\| e_h\|_{0,F}^2
  \Big\}^{1/2},
\end{align*}
where we used the inverse inequalities ~\eqref{eq:norm+_eta_f_ineq_6.1}, \eqref{eq:norm+_eta_f_ineq_7.1} and~\eqref{eq:norm+_eta_f_ineq_6.1.F} to reach the last line.

Let us now show that
\begin{equation}\label{eq:estimation_h_perpF}
\|\qvec_{bc}\|_{0,{K_F}}\leq  \frac1{\sqrt{3}}(h_{\perp F})^{1/2}\|e_h\|_{0,F}.
\end{equation}
{\small First, if ${K_F}$ is a rectangle or a cuboid, the definition of $\qvec_{bc}$ gives
\begin{align*}
 \|\qvec_{bc}\|_{0,{K_F}}^2&=\int_{K_F} e_h^2(\xvec+\lambda_F(\xvec)h_{\perp F}\nvec_F)|\boldsymbol{\rho}_{{K_F},F}(\xvec)|^2 d\xvec\\
 &=h_{\perp F}\int_{\lambda_F=0}^{1}\int_{\zvec\in F} e_h^2(\zvec)(1-\lambda_F)^2 d\lambda_Fd\zvec\\
 &=h_{\perp F}\|e_h\|_{0,F}^2\int_{0}^{1}t^2dt\\
 &=\frac{1}{3} h_{\perp F}\|e_h\|_{0,F}^2,
\end{align*}
where we used at the second line that $\zvec=\xvec+\lambda_F(\xvec)h_{\perp F}\nvec_F \in F$ for all $\xvec \in {K_F}$. \\
Second, if ${K_F}$ is a simplex, the definition of $\qvec_{bc}$ gives
\begin{align*}
&\qquad  \|\qvec_{bc}\|_{0,{K_F}}^2\\
 &=\int_{K_F} e_h^2(\sum_{i=1}^{d-1}\lambda_i(\xvec)\xvec_i +(\lambda_0(\xvec)+\lambda_d(\xvec))\xvec_d)|\boldsymbol{\rho}_{{K_F},F}(\xvec)|^2d\xvec\\
 &=d!\,|{K_F}|\int_{\lambda_0=0}^{1}\int_{\lambda_1=0}^{1-\lambda_0}\dots\int_{\lambda_d=0}^{1-\sum_{i=0}^{d-1}\lambda_i} e_h^2(\sum_{i=1}^{d-1}\lambda_i\xvec_i +(\lambda_0+\lambda_d)\xvec_d)(1-\lambda_0)^2 \prod_{i=0}^d d\lambda_i\\
 &\leq \frac{d!\,|{K_F}|}{(d-1)!\,|F|}\|e_h\|_{0,F}^2\int_{\lambda_0=0}^{1}(1-\lambda_0)^2 d\lambda_0\\
 &\leq \frac{d}{3} \frac{|{K_F}|}{|F|}\|e_h\|_{0,F}^2\\
 &\leq \frac{1}{3} h_{\perp F}\|e_h\|_{0,F}^2.
\end{align*}
Above,  to reach the third line, we used the fact that for $\lambda_0\in(0,1]$, $\{\sum_{i=1}^{d-1}\lambda_i\xvec_i $ $+ (\lambda_0+\lambda_d)\xvec_d\} \subsetneq F$.} \\
This completes the proof of~\eqref{eq:estimation_h_perpF}. We thus infer that
\begin{align}
&\qquad d(\zeta-\tilde{\zeta}_h,\xi_{bc,F}) \nonumber\\
&\leq|\zeta-\tilde{\zeta}_h|_{+,{K_F}}
  (\delta^{max}_{o,{K_F}_F}h_{\perp F})^{1/2}\Big\{\frac13 \{1+{(C'_d)}^2\}{+(\tilde{\Gamma}_e)^*}
  \Big\}^{1/2}\|e_h\|_{0,F},\label{eq:norm+_eta_f_ineq_44}
\end{align}
Using~\eqref{eq:void_equality},~\eqref{eq:norm+_eta_f_ineq_44} and {(\ref{estimate_on_qf})}, we obtain
\begin{align*}
&\quad (e_h, \psi_F e_h)_{0,F} \\
&\leq \|\qvec_{f}\|_{0,{K_F}}\|\qvec_{bc}\|_{0,{K_F}} \\
&\quad  +|\zeta-\tilde{\zeta}_h|_{+,{K_F}}
  (\delta^{max}_{o,{K}_F}h_{\perp F})^{1/2}\Big\{\frac13\{1+{(C'_d)}^2\} +(\tilde{\Gamma}_e)^*
  \Big\}^{1/2}\|e_h\|_{0,F}\\
& \leq |\zeta-\tilde{\zeta}_h|_{+,{K_F}}\Big[\frac{\{1+{C_d^2}\}^{1/2}}{c_q}(\delta^{max}_{o,{K_F}})^{1/2}\|\qvec_{bc}\|_{0,{K_F}}\\
&\hskip 27truemm +  (\delta^{max}_{o,{K_F}}h_{\perp F})^{1/2} \Big\{ \frac13\{1+{(C'_d)}^2\} +(\tilde{\Gamma}_e)^*
  \Big\}^{1/2}\|e_h\|_{0,F}\Big]\\
& \leq |\zeta-\tilde{\zeta}_h|_{+,{K_F}}\Big[\frac{\{1+{C_d^2}\}^{1/2}}{\sqrt{3}c_q}(\delta^{max}_{o,{K_F}}h_{\perp F})^{1/2} \\
&\hskip 27truemm + (\delta^{max}_{o,{K_F}}h_{\perp F})^{1/2}\Big\{ \frac13\{1+{(C'_d)}^2\} +(\tilde{\Gamma}_e)^*
  \Big\}^{1/2}\Big]\|e_h\|_{0,F}\\
 & \leq |\zeta-\tilde{\zeta}_h|_{+,{K_F}}
  (\delta^{max}_{o,{K_F}}h_{\perp F})^{1/2}\Big[\frac{\{1+{C_d^2}\}^{1/2}}{\sqrt{3}c_q}\\
  &\hskip 47truemm +\Big\{ \frac13\{1+{(C'_d)}^2\} +(\tilde{\Gamma}_e)^* \Big\}^{1/2}\Big]\|e_h\|_{0,F}.
\end{align*}
We concludes the proof by using~\eqref{eq:norm+_eta_f_ineq_5.1.F}.
\end{proof}
\begin{remark}
Assume in addition in Theorem~\ref{thm_apost_error_est} that there exists a constant $\kappa>0$, such that $\min_{K\in\Tcal_h} \kappa_K \geq \kappa$, for all $h>0$. Then, the constants $\mathtt{c}$ and $\mathtt{C}$ {only depend on $\kappa_K$}.
\end{remark}
}

The results of this section extend with the same arguments to the situation where $\T_e \geq 0$ may vanish if one slightly modifies the definition of the norms by
\begin{align*}
\|\zeta\|_{S,\star}^2 &= (\delta_o\pvec,\pvec)_{0,\Omega}
+(\delta_{\star}\,\phi,\phi)_{0,\Omega}\\
&\qquad 
+{\sum_{K\in \Tcal_h} \delta^{max}_{o,K}h_K^2 \| \dive \pvec\|_{0,K}^2} {+ \sum_{F\in\mathcal{F}^e_h} \delta^{max}_{o,K_F}h_{\perp F} \|\tilde{\Gamma}_e^{1/2}(\pvec\cdot\nvec)\|_{0,F}^2},\\
&|\zeta|_{+,\star,K}=\sup_{\xi\in \Xcal_K, \|\xi\|_{S,\star}\leq 1}d(\zeta,\xi),
\end{align*}
where $\delta_\star$ is defined by
\begin{equation*}
\delta_{\star}|_K=
\left\{
\begin{aligned}
& \delta_e \quad \text{if }\inf_K \|\delta_e\|>0,\\
&\mathbb{I}  \quad \text{otherwise.}
\end{aligned}
\right.
\end{equation*}

Let us define for all $K\in \Tcal_h,$
\[ \delta^{max}_{\star,K} = \max_{g\in\Ical_G, i\in\Ical_e}\sup_{K}(((\delta_{\star})_{g,g})_{i,i}), \quad \T^{min}_{\star,K} = \min_{g\in\Ical_G, i\in\Ical_e}\inf_{K}(((\delta_{\star})_{g,g})_{i,i}).
\]

Under the assumptions of Theorem~\ref{theorem:norm+K}, one has the reliability estimate
\begin{align*}
&|\zeta-\tilde{\zeta}_h|_{+,K}\leq \left({\eta}^2_{r,\star,K} +\sum_{K'\in N(K)}\eta^2_{f,K'}{+\sum_{F\in {\mathcal{F}_h^e}\cap \pa K}\eta_{bc,F}^2}\right)^{1/2},
\end{align*}
where the residual estimator becomes  
\begin{equation*}
{\eta_{r,\star,K}}=\|\delta_{\star}^{-1/2} (S_f-\H^T\dive \pvec_h -\T_e\tilde{\phi}_h)\|_{0,K}.
\end{equation*}
Under the assumptions of Theorem~\ref{thm_apost_error_est}, one has the efficiency estimates {for $K\in\Tcal_h$ and $F\in \mathcal{F}^e_h$},
\begin{align*}
{\eta}_{r,\star, K}&\leq \mathtt{c}\,{ \left(\frac{\delta^{max}_{\star,K}}{\delta^{min}_{\star,K}}\right)^{1/2}}\,|\zeta-\tilde{\zeta}_h|_{+,\star,K}
, \\
{{\eta}_{f,K}}&{\leq { \mathtt{C} \left(\frac{\delta^{max}_{o,K}}{\delta^{min}_{o,K}}\right)^{1/2}}\,|\zeta-\tilde{\zeta}_h|_{+,\star,K}},\\
{{\eta}_{bc,F}}&{\leq { \mathsf{C} }\,|\zeta-\tilde{\zeta}_h|_{+,\star,K_F}.}
\end{align*}

\subsection{Extension to the Domain Decomposition+$L^2$-jumps method}
According to~\cite{CiDoGeMa25}, {it is possible to} extend in this section the strategy to a domain decomposition method introduced in~\cite{CiJK17}, namely the DD+$L^2$-jumps method. We recall here the definition of this {multi-domain} approach presented in~\cite[Section 2]{CiDoGeMa25}.

To this aim, let us consider a partition $\{\Omega^*_{i^*}\}_{1\leq {i^*}\leq {N^*}}$ of $\Omega$ which is independent of the physical partition $\{{\Omega}_i\}_{1\leq i\leq N}$ introduced in Section~\ref{sec-notations}. For a field $v$ defined over $\Omega$, we shall use the notation $v_{i^*}=v|_{\Omega^*_{i^*}}$, for $1\leq {i^*}\leq {N^*}$. We denote by $\Gamma_{{i^*}{j^*}}$ the interface between two subdomains $\Omega^*_{i^*}$ and $\Omega^*_{j^*}$ for ${i^*}\neq {j^*}$: if $ \text{dim}_{H}\left(\partial\Omega^*_{i^*}\cap\partial\Omega^*_{j^*}\right)=d-1$, then $\Gamma_{{i^*}{j^*}}=\text{int}(\partial\Omega^*_{i^*}\cap\partial\Omega^*_{j^*})$; otherwise, $\Gamma_{{i^*}{j^*}}=\emptyset$. By construction, $\Gamma_{{i^*}{j^*}}=\Gamma_{{j^*}{i^*}}$.
 We define the {global} interface $\Gamma$ by
\[
\Gamma=\cup_{{i^*}=1}^{N^*}\cup_{{j^*}={i^*}+1}^{N^*}\overline{\Gamma_{{i^*}{j^*}}}.
\]
{For $1\leq {i^*}\leq {N^*}$, we let $\Gamma_{i^*} = \pa\Omega_{i^*}\cap \pa\Omega$}.
We then introduce the function spaces
\begin{align*}
\PQvec(\Omega) &= \{\qvec\in L^2(\Omega)\ | \ \qvec_{i^*}\in {\Hvec}(\dive,\Omega_{i^*}),\ {(\qvec\cdot\nvec)_{|\Gamma_{i^*}} \in L^2(\Gamma_{i^*})},\quad 1\leq {i^*}\leq {N^*}\},\\
M&=\{m =(m_{{i^*}{j^*}})_{{i^*}<{j^*}} \in \prod_{{i^*}<{j^*}}L^2(\Gamma_{{i^*}{j^*}})\},\\
\mathbf{Q}^*&=\{\qvec\in \PQvec(\Omega)\ | \ [\qvec\cdot \nvec ]\in M\},\\
\Wens&=\mathbf{Q}^*\times L^2(\Omega)\times M,
\end{align*}
where $[\qvec\cdot \nvec ]$ stands for {the {\em global jump} of the normal component} and is defined by
\begin{align*}
[\qvec\cdot \nvec ]|_{\Gamma_{{i^*}{j^*}}} = {\qvec}_{i^*}\cdot \nvec_{i^*} + \qvec_{j^*}\cdot\nvec_{j^*}, \text{ for } 1\leq {i^*} < {j^*} \leq {N^*}.
\end{align*}
These spaces are endowed with their natural norm, eg.
\[ \|m\|_M = \left(\sum_{1\le{i^*}<{j^*}\le {N^*}}\|m_{{i^*}{j^*}}\|_{0,\Gamma_{{i^*}{j^*}}}^2\right)^{1/2}. \]
The variational formulation associated to the multi-domain problem writes
\begin{align}
\left\{
\begin{aligned}
&\text{Find } \uelt=(\pvec,\phi,\ell)\in \Wens \text{ such that for all } \welt=(\mathbf{q},\psi,m)\in \Wens,\\
& \quad c_{DD}(\uelt,\welt) = f({\welt}).
\end{aligned}
\right.
\label{eq:varf_ddm}
\end{align}
Extending the definition \eqref{eq:bilin-form-c-PC} of the bilinear form $c$ to piecewise smooth fields by replacing $\displaystyle\int_{\Omega}$ by $\displaystyle \sum_{i^*=1}^{N^*}\int_{\Omega_{i^*}}$, one uses the forms
\[ c_{DD}(\uelt,\welt) = c((\pvec,\phi),(\qvec,\psi))
+ \int_{\Gamma} [\pvec\cdot \nvec] m - \int_{\Gamma} [\mathbf{q}\cdot \nvec] \ell,
\quad {\mbox{and } f(\welt) = (S_f, \psi)_{0,\Omega}}. \]
In addition to the physical variables $\pvec$ and $\phi$, the field $\ell$ can be seen as a Lagrange multiplier. {With the help of the appendix of \cite{CiJK17}, one is able to prove} there is equivalence between the multi-domain problem associated to~\eqref{eq:varf_ddm} and the mono-domain Problem~\eqref{eq:diff-mixed}.
For $K\in\Tcal_h$, we now introduce ${N^*}(K)=N(K)\cap \overline{\Omega^*_{K}}$ where $\Omega^*_{K}$ is the subdomain which includes $K$ and \[\Xcalmg_K^*  = \left\{\zeta=(\pvec,\phi)\in \PQvecmg(\Omega)\times \Ludud^2(\Omega)\ |\ \text{Supp}(\phi) \subset K, \text{Supp} (\pvec) \subset {N^*}(K)\right\}. \] 
Indeed, since $\pvec$ is in $\PQvecmg(\Omega)$, only the mesh elements $K'$ of $N(K)$ that belong to $\overline{\Omega^*_K}$ have to be considered above. In this sense, the definition is slightly different from the one given in the mono-domain case: $N(K)$ is now replaced by ${N^*}(K)$, {because there is no continuity of the normal trace across $\Gamma$}. Then one can define the following $\Xcalmg_K^*$-local norm, for all $\zeta\in \Xcalmg$,
\begin{equation} \label{ddm_local-norm}
|\zeta|_{+,K}=\sup_{\xi\in \Xcalmg_K^*, \|\xi\|_S\leq 1}d(\zeta,\xi).
\end{equation}

 We introduce discrete, finite-dimensional, spaces indexed by $h$ as follows:
${\Qvec}_{{i^*},h}\subset\Hvec(\dive,\Omega^*_{i^*})$ 
and $L_{{i^*},h}\subset L^2(\Omega^*_{i^*})$, for $1\leq {i^*} \leq {N^*}$. {In the spirit of the mono-domain case}, we impose the following requirements for all $1\leq {i^*} \leq {N^*}$:
\begin{itemize}
	\item $\qvec_{{i^*},h}\cdot \nvec \in L^2(\partial\Omega^*_{i^*})$ for all $h>0$, for all $\qvec_{{i^*},h}\in {\Qvec}_{{i^*},h}$;
	\item $\dive  {\Qvec}_{{i^*},h} \subset L_{{i^*},h}$ for all $h>0$;
	\item $({\Qvec}_{{i^*},h})_h$ and $(L_{{i^*},h})_h$ satisfy the approximability property~\eqref{eq:approx-pro} in $\Omega^*_{i^*}$.
\end{itemize}
We observe that, to build conforming discretizations in $\PQvec(\Omega)$, one uses meshes that are {\em conforming} with respect to {every subdomain $\Omega^*_{i^*}$} of the partition. {Hence}, one first defines, for $1 \le {i^*} \le {N^*}$, families of {\em conforming} meshes $(\Tcal_{h,{i^*}})_h$ of $\overline{\Omega^*_{i^*}}$. Then, the meshes $(\Tcal_h)_h$ are built by aggregating for given $h$ the meshes $(\Tcal_{h,{i^*}})_{1 \le {i^*} \le {N^*}}$. \\
If $\Omega^*_{i^*}$ and $\Omega^*_{j^*}$ share a common (non-empty) interface $\Gamma_{{i^*}{j^*}}$, the meshes $\Tcal_{h,{i^*}}$ and $\Tcal_{h,{j^*}}$ are said to be {\em matching} if their restriction to $\Gamma_{{i^*}{j^*}}$ coincide. Otherwise, they are {\em non-matching}. As soon as there is a pair of non-matching meshes, the mesh $\Tcal_h$ is not {\em conforming}: we call this situation the {\em non-matching case}. On the contrary, when all pairs of meshes are matching,  $\Tcal_h$ itself is a {\em conforming} mesh {with respect to $\Omega$}: we call this situation the {\em matching case}. \\
Introducing the discrete space of Lagrange multipliers $M_h\subset M$, we then set 
\begin{align*}
\Qvec^*_h=\prod_{{i^*}=1}^{N^*}{\Qvec}_{{i^*},h},\quad {L^*_h}=\prod_{{i^*}=1}^{N^*}L_{{i^*},h},\quad
\Wens_h=\Qvec^*_h\times  L^*_h\times M_h,
\end{align*}
{For $1\le{i^*}\le{N^*}$, we introduce the spaces of (discrete) normal traces}
\begin{align*}
T_{{i^*},h}=\{t_{{i^*},h}\in L^2(\partial\Omega^*_{i^*}\cap\Gamma)\ |\ \exists \qvec_{{i^*},h}\in\Qvec_{{i^*},h},\ t_{{i^*},h}=\qvec_{{i^*},h}\cdot \nvec_{{i^*}_{|\partial\Omega^*_{i^*}\cap\Gamma}}\}.
\end{align*}
We {further} assume that the space of piecewise constant fields is included in $M_h$. \\
The discrete variational formulation associated to~\eqref{eq:varf_ddm} writes
\begin{align}
\left\{
\begin{aligned}
&\text{Find }\uelt_h=(\pvec_h,\phi_h,\ell_h)\in \Wens_h \text{ such that for all }\welt_h =(\qvec_h,\psi_h,m_h)
\in \Wens_h,\\
& c_{DD}(\uelt_h,\welt_h) =  f({\welt_h}).
\end{aligned}
\right.
\label{eq:ddm_varf_discrete}
\end{align}
Following~\cite[Section 5]{CiJK17}, we define the discrete $L^2$-projection operators $ (\Pi_{{i^*}})_{1\le{i^*}\le{N^*}}$ from the spaces of normal traces {$(T_{i^*,h})_{1\le{i^*}\le{N^*}}$} to $M_h$,\footnote{More precisely, from $T_{{i^*},h}$ to $\{m_h\in M_h\ |\ \mbox{supp}(m_h)\subset\partial\Omega^*_{i^*}\cap\Gamma\}$.} resp. the discrete $L^2$-projection operators $(\pi_{{i^*}})_{1\le{i^*}\le{N^*}}$ from $M_h$ to $(T_{{i^*},h})_{1\le{i^*}\le{N^*}}$. {For $1\le{i^*}\le{N^*}$, they} are defined by
\begin{align*}
&\forall t_{{i^*},h}\in T_{{i^*},h},\ \forall m_h\in M_h, \qquad\left\{
\begin{aligned}
\int_{\partial\Omega^*_{i^*}\cap\Gamma}(\Pi_{i^*}t_{{i^*},h}-t_{{i^*},h})m_h&=0\\
\int_{\partial\Omega^*_{i^*}\cap\Gamma}(\pi_{i^*} m_h -m_h)t_{{i^*},h}&=0.
\end{aligned}
\right.
\end{align*}
Next, let $\pvec_h\in\Qvec^*_h$. {For ${i^*}<{j^*}$}, we define the {\em discrete jump} of the normal component of $\pvec_h$ on the interface $\Gamma_{{i^*}{j^*}}$ as $[\pvec_h\cdot \nvec]_{h,{i^*}{j^*}} := \Pi_{{i^*}} (\pvec_{{i^*},h}\cdot \nvec_{i^*}{}_{|\Gamma_{{i^*}{j^*}}}) + \Pi_{{j^*}} (\pvec_{{j^*},h}\cdot \nvec_{j^*}{}_{|\Gamma_{{i^*}{j^*}}})$. {Then, the {\em discrete global jump} $[\pvec_h\cdot \nvec ]_h$ is defined by
\begin{align*}
[\pvec_h\cdot \nvec ]_h|_{\Gamma_{{i^*}{j^*}}} = [\pvec_h\cdot \nvec]_{h,{i^*}{j^*}}, \text{ for } 1\leq {i^*} < {j^*} \leq {N^*}.
\end{align*}}

\begin{assumption}\label{assumption:discrete_ddm}
	We assume that there exists $\beta_h>0$ such that for all $\qvec_h\in \Qvec^*_h$, 
	\begin{align}
	\int_{\Gamma} [\qvec_h\cdot \nvec]_h[\qvec_h\cdot \nvec]\geq \beta_h  \int_{\Gamma} [\qvec_h\cdot \nvec]^2,
	\label{hyp:beta_h}
	\end{align}
	and that there exists $\gamma_h>0$ such that for all $m_h\in M_h$,
	\begin{align}
	\sum_{{i^*}=1}^{N^*}\sum_{{j^*}={i^*}+1}^{N^*}\int_{\Gamma_{{i^*}{j^*}}}((\pi_{i^*}m_h)^2+(\pi_{j^*}m_h)^2) \geq \gamma_h \|m_h\|_M^2.
	\label{hyp:gamma_h}
	\end{align}
\end{assumption}
We refer to \cite[Section 5.2]{CiJK17} for an extensive discussion on how to fulfill this assumption in practice. In particular (see \S5.2.1 in \cite{CiJK17}), the choice 
\begin{equation}\label{Mh_SufficientCondition}
 M_h=\sum_{{i^*}=1}^{N^*}T_{{i^*},h}
\end{equation}
{can be shown to be a sufficient condition for Assumption \ref{assumption:discrete_ddm} to hold. Then, adapting the proof given in~\cite[Section 5.1]{CiJK17} to cover the case of a Robin boundary condition, one finds that}, under {Assumption~\ref{assumption:discrete_ddm}}:
\begin{itemize}
\item the discrete problem \eqref{eq:ddm_varf_discrete} is well-posed\,;
\item the discrete solution fulfills $[\pvec_h\cdot \nvec] = 0$, so that $\pvec_h\in \Qududvec(\Omega)$.
\end{itemize}

Before stating the a posteriori estimates, we define a reconstruction associated to the DD$+L^2$ jumps method. We choose {the method proposed in \cite{CiDoGeMa25}. Precisely, we} look for $\tilde{\zeta}_h:=\tilde{\zeta}_h(\pvec_h,\phi_h,\ell_h)\in \Qududvec(\Omega)\times {\Vudud}$ where $(\pvec_h,\phi_h,\ell_h)$ is the discrete solution to~\eqref{eq:ddm_varf_discrete}. {In particular, there are only two components appearing in the resconstruction $\tilde{\zeta}_h$}.
Since under Assumption~\ref{assumption:discrete_ddm}, one has $\pvec_h\in\Qududvec(\Omega)$, one can set $\tilde{\pvec}_h= \pvec_h$. Finally, we will design $\tilde{\phi}_h$ as a function of $(\phi_h,\ell_h)$. To summarize, we will consider from this point on reconstructions like
\[ \tilde{\zeta}_h=(\pvec_h,\tilde{\phi}_h(\phi_h,\ell_h)) \in \Qududvec(\Omega)\times {\Vudud}. \]
We refer to~\cite[Section 6.1]{CiDoGeMa25} for the definition of reconstruction approaches, and their practical implementation.

\begin{theorem}\label{theorem:ddm_norm+K}
We suppose that Assumption~\ref{assumption:discrete_ddm} holds. 
Let $\tilde{\zeta}_h=(\pvec_h,\tilde{\phi}_h)\in \Qududvec(\Omega)\times {\Vudud}$ be a reconstruction.
For any $K\in \Tcal_h$, we define the residual estimator ${\eta_{r,K}}$ as in \eqref{eq:def_residual_estimator}, the flux estimator ${\eta_{f,K}}$ as in \eqref{eq:def_flux_estimator}. 
For any $F\in{\mathcal{F}_h^e}$, we define the Robin boundary condition estimator ${\eta_{bc,F}}$ as in~\eqref{eq:def_bc_estimator}. One has the reliability estimate
\begin{align}
&|\zeta-\tilde{\zeta}_h|_{+,K}\leq \left({\eta}^2_{r,K} +\sum_{K'\in {N^*}(K)} \eta^2_{f,K'}{+\sum_{F\in {\mathcal{F}_h^e}\cap \pa K}\eta_{bc,F}^2}\right)^{1/2}.\label{eq:ddm_rt_ineq_1}
\end{align}
\end{theorem}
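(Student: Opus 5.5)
The plan is to reproduce the argument of Theorem~\ref{theorem:norm+K}, checking that each step survives in the multi-domain setting. The first task is to ensure that $\zeta-\tilde{\zeta}_h$ and the test functions are admissible for the identity of Lemma~\ref{lemma:leandre_reconstruction}.

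\textbf{Admissibility of $\zeta-\tilde{\zeta}_h$.} The exact solution $\zeta=(\pvec,\phi)$ of the multi-domain problem coincides with the mono-domain solution of \eqref{eq:VF-3-PC}, by the equivalence recalled above (appendix of \cite{CiJK17}). Under Assumption~\ref{assumption:discrete_ddm}, the discrete flux satisfies $[\pvec_h\cdot\nvec]=0$, so $\pvec_h\in\Qududvec(\Omega)$. Hence $\tilde{\zeta}_h=(\pvec_h,\tilde{\phi}_h)\in\Qududvec(\Omega)\times\Vudud$, and the left-hand side of \eqref{eq:ddm_rt_ineq_1} is well defined.

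\textbf{Test functions.} Fix $K$ and take $\xi=(\qvec,\psi)\in\Xcalmg_K^*$. Then $\psi$ is supported in $K$ and $\qvec$ in $N^*(K)\subset\overline{\Omega^*_K}$. Since $\qvec\in\PQvecmg(\Omega)$ vanishes outside the single subdomain $\Omega^*_K$, no interface term on $\Gamma$ arises.

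\textbf{Error identity.} I would redo the computation of Lemma~\ref{lemma:leandre_reconstruction} for such $\xi$. Expanding $d(\zeta,\xi)$ through \eqref{eq:VF-1-PC} requires $\xi$ to be a legitimate test function, or at least the primal relations \eqref{eq:diff-mixed} to hold pointwise. The route I prefer avoids the mono-domain variational test altogether: use the strong equations \eqref{eq:diff-mixed} for $\zeta$ and integrate by parts only within $\Omega^*_K$. There $\qvec\in\Hudud(\dive)$ and $\phi-\tilde{\phi}_h\in\Hudud^1$, with boundary contributions only on $\pa\Omega$ through the Robin condition. The terms on $\partial\Omega^*_K\cap\Gamma$ must also be controlled. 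They vanish if $\qvec\cdot\nvec=0$ there, which is the point to check. Since $\text{Supp}(\qvec)\subset N^*(K)$ and $\qvec\in\Hvec(\dive)$ on $\Omega^*_K$ only, the normal trace on $\Gamma$ need not vanish. However, $\phi-\tilde{\phi}_h$ is continuous across $\Gamma$, and the relevant term is $(\H(\phi-\tilde\phi_h),\qvec\cdot\nvec)$ integrated on $\partial\Omega^*_K\cap\Gamma$.

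\textbf{Main obstacle.} The hardest step is this interface contribution. The form $d$ is defined for piecewise fields subdomain by subdomain, so the term $(\phi,\H^T\dive\qvec)$ is computed on $\Omega^*_K$. Integrating $\phi$ by parts there produces $\int_{\partial\Omega^*_K\cap\Gamma}\H\phi\,\qvec\cdot\nvec$. Doing the same for $\tilde{\phi}_h$ gives the identical expression with $\phi$ replaced by $\tilde{\phi}_h$. The two combine into $\int_{\partial\Omega^*_K\cap\Gamma}\H(\phi-\tilde{\phi}_h)\,\qvec\cdot\nvec$, which is not obviously zero. I would first check whether the definition of $\Xcalmg^*_K$ (support in $N^*(K)$, hence a closed set intersecting $\Gamma$ only on facets of $\partial\Omega^*_K$) should be read as imposing a vanishing normal trace on $\Gamma$. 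Alternatively, if $d$ includes the interface term via $\ell$, I would use $\ell=\phi|_\Gamma$, as in \cite{CiJK17}. If so, identity \eqref{eq:recons_leandre} holds verbatim with the integrals localized to $K$, $N^*(K)$ and $\mathcal{F}^e_h\cap\pa K$.

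\textbf{Conclusion.} With the identity in hand, apply Cauchy--Schwarz exactly as before: in $\Ludud^2(K)$, in $\Ludud^2(K')$ for $K'\in N^*(K)$, in $\Ludud^2(F)$ for boundary faces, and finally in $\R^m$. This gives
\[ d(\zeta-\tilde{\zeta}_h,\xi)\le\Big(\eta_{r,K}^2+\sum_{K'\in N^*(K)}\eta_{f,K'}^2+\sum_{F\in\mathcal{F}^e_h\cap\pa K}\eta_{bc,F}^2\Big)^{1/2}\|\xi\|_S. \]
Taking the supremum over $\|\xi\|_S\le1$ in \eqref{ddm_local-norm} yields \eqref{eq:ddm_rt_ineq_1}.
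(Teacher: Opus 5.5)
\textbf{Gap: the error identity on $\Xcalmg^*_K$ is never established.} Your outline (error identity, then Cauchy--Schwarz with $N^*(K)$ in place of $N(K)$) is what the paper's one-line proof defers to. However, you leave open the only step where the multi-domain case differs from Theorem~\ref{theorem:norm+K}. You never establish \eqref{eq:recons_leandre} for $\xi\in\Xcalmg^*_K$; you only list readings under which it might hold, and your own computation shows this is not a formality. A flux $\qvec\in\PQvecmg(\Omega)$ supported in $N^*(K)\subset\overline{\Omega^*_K}$ generally has a nonzero normal trace on $\pa\Omega^*_K\cap\Gamma$. Since $\qvec$ vanishes on the other side, its jump there is exactly this one-sided trace. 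So your sentence ``no interface term on $\Gamma$ arises'' is backwards: $\xi\notin\Xcalmg$, and Lemma~\ref{lemma:leandre_reconstruction} does not apply. With the two-field form $d$ evaluated subdomain by subdomain, integration by parts on $\Omega^*_K$ gives
\[
d(\zeta-\tilde\zeta_h,\xi)=R(\xi)-\int_{\pa\Omega^*_K\cap\Gamma}\H(\phi-\tilde\phi_h)\cdot(\qvec\cdot\nvec),
\]
where $R(\xi)$ is the right-hand side of \eqref{eq:recons_leandre}. The extra term involves the trace on $\Gamma$ of the global error, not a local residual. A flux lifted from an interface facet $F$ and normalised in $\|\cdot\|_S$ picks up a contribution of order $h^{-1/2}\|\H(\phi-\tilde\phi_h)\|_{0,F}$ (up to coefficient weights). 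The local estimators $\eta_{r,K}$, $\eta_{f,K'}$, $\eta_{bc,F}$ do not bound this. So the term cannot be estimated; it must vanish identically, and the proof has to say why.

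\textbf{How to close it.} Either of two conventions works, and you must commit to one.

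(a) Require the test fluxes in $\Xcalmg^*_K$ to satisfy $\qvec\cdot\nvec=0$ on $\pa\Omega^*_K\cap\Gamma$. Extended by zero they then lie in $\Qududvec(\Omega)$, so $\Xcalmg^*_K\subset\Xcalmg_K$. The proof of Theorem~\ref{theorem:norm+K} then applies verbatim, with the flux sum restricted to $N^*(K)$.

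(b) Keep free one-sided traces on $\Gamma$, which is what the paper's remark on the lack of normal-trace continuity suggests. Then the error must be measured with the multi-domain form: $d$ augmented by the multiplier term $\int_\Gamma[\qvec\cdot\nvec]\,\ell$ coming from $c_{DD}(\cdot,(-\qvec,\psi,m))$. The $m$-term drops because $[\pvec\cdot\nvec]=[\pvec_h\cdot\nvec]=0$. Testing the multi-domain problem with $(\qvec,0,0)$ and integrating by parts subdomain-wise shows that the exact multiplier is $\ell=(\H\phi)_{|\Gamma}$, not $\phi_{|\Gamma}$. The decisive choice is the multiplier attached to the reconstruction. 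Take $\tilde\ell_h=(\H\tilde\phi_h)_{|\Gamma}$, which is single-valued because $\tilde\phi_h\in\Vudud$. Do \emph{not} take the discrete $\ell_h$, which would leave an uncontrolled term $\int_\Gamma[\qvec\cdot\nvec](\H\tilde\phi_h-\ell_h)$. With this choice, $\int_\Gamma[\qvec\cdot\nvec](\ell-\tilde\ell_h)$ cancels the interface term exactly. Hence \eqref{eq:recons_leandre} holds for all $\xi\in\Xcalmg^*_K$, and your Cauchy--Schwarz step concludes.

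One last detail in that step: $\qvec$ may be supported on any $K'\in N^*(K)$. The Robin terms produced are therefore those of all boundary facets of elements of $N^*(K)$, not only of $\mathcal{F}^e_h\cap\pa K$. The displayed bound, like its mono-domain counterpart, lists only the latter.
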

\begin{proof}
The proof is similar to the proof of~\cite[Theorem 6.4]{CiDoGeMa25}.
\end{proof}
\begin{theorem}[local efficiency of the {\em a posteriori} error estimators]\label{ddm_thm_apost_error_est}
Let {Assumptions~\ref{assumption:locality_polynomial} and~\ref{assumption:discrete_ddm} hold}. 
Let $\tilde{\zeta}_h=(\pvec_h,\tilde{\phi}_h)\in \Qududvec(\Omega)\times {\Vudud}$ be a reconstruction.
For $K \in \Tcal_h$, let ${\eta}_{r,K}$ and ${\eta}_{f,K}$ be the residual and flux estimators respectively given by~\eqref{eq:def_residual_estimator}, and~\eqref{eq:def_flux_estimator}. Estimates~\eqref{eq:norm+_local_efficiency_eta_r} and~\eqref{eq:norm+_local_efficiency_eta_f} hold true 
where {$\mathtt{c}$ and $\mathtt{C}$ are constants which depend} only on the polynomial degree of $S_f$, $\T_o$, $\T_e$  {and $\tilde{\phi}_h$}, $d$, and the shape-regularity parameter $\kappa_K$. 
\\
{For $F \in \mathcal{F}^e_h$, let ${\eta}_{bc,F}$ be the Robin boundary condition estimator given by~\eqref{eq:def_bc_estimator}. 
Estimates \eqref{eq:norm+_local_efficiency_eta_bc} holds true
where 
{$\mathtt{c}$ and $\mathtt{C}$ are constants which depend} only on the polynomial degree of $S_f$,  $\T_e$  {and $\tilde{\phi}_h$}, $d$, and the shape-regularity parameter $\kappa_{K_F}$.
}
\end{theorem}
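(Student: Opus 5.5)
The plan is to reduce the statement to the mono-domain proof of Theorem~\ref{thm_apost_error_est}. The point is that each test function used there has support inside a single mesh element, so it belongs to the multi-domain local space $\Xcalmg_K^*$ as well. Under Assumption~\ref{assumption:discrete_ddm}, the discrete solution satisfies $[\pvec_h\cdot\nvec]=0$, so $\pvec_h\in\Qududvec(\Omega)$, and the reconstruction $\tilde{\zeta}_h=(\pvec_h,\tilde{\phi}_h)$ lies in $\Qududvec(\Omega)\times\Vudud$. Lemma~\ref{lemma:leandre_reconstruction} therefore applies verbatim: for every $\xi=(\qvec,\psi)$ whose vector part lies in $\Qududvec(\Omega)$, the identity \eqref{eq:recons_leandre} gives $d(\zeta-\tilde{\zeta}_h,\xi)$ in terms of the residual, the flux and the boundary error $e_h=\H\tilde{\phi}_h-\tilde{\Gamma}_e(\pvec_h\cdot\nvec)$. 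Here $\zeta$ is the mono-domain solution, which is equivalent to the multi-domain one.

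For $\eta_{r,K}$ I will take $\xi_{r,K}=(0,\psi_K\psi_r)$, supported in $K$. This lies in $\Xcalmg_K^*$, since the support condition on $\phi$ is unchanged and $\pvec=0$. The bubble argument \eqref{eq:norm+_ineq_5}--\eqref{eq:norm+_ineq_4} then goes through unchanged, using the supremum in \eqref{ddm_local-norm}. For $\eta_{f,K}$ I will take $\xi_{f,K}=(\psi_K\qvec_f,0)$ on $K$. Its support is $K$, and $K\in{N^*}(K)$ because $K\subset\overline{\Omega^*_K}$. Since $\psi_K\qvec_f$ vanishes on $\partial K$, it is in $\Hvec(\dive,\Omega)$ and in particular in $\PQvecmg(\Omega)$ with zero jump on $\Gamma$. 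Repeating \eqref{eq:norm+_eta_f_ineq_5}--\eqref{estimate_on_qf} yields \eqref{eq:norm+_local_efficiency_eta_f}.

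For $\eta_{bc,F}$, with $F\in\mathcal{F}^e_h$ a face of $K_F$, I will use the same face-bubble construction $\xi_{bc,F}=(\psi_F\qvec_{bc},0)$ supported in $K_F$, built from the simplicial or Cartesian recipe. I must check that this field is admissible in $\Xcalmg_{K_F}^*$. Its support is in $K_F\subset{N^*}(K_F)$, and it vanishes on $\partial K_F\setminus F$. In particular it vanishes on any part of $\partial K_F$ lying on $\Gamma$, so no interface jump is created. Its normal trace on $\partial\Omega$ is $\psi_F e_h$ on $F$, which is in $L^2$. The identity \eqref{eq:void_equality} and the bound \eqref{eq:estimation_h_perpF} on $\|\qvec_{bc}\|_{0,K_F}$ are purely local to $K_F$, so they hold unchanged. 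Combining them with the flux estimate \eqref{estimate_on_qf} on $K_F$ gives \eqref{eq:norm+_local_efficiency_eta_bc}.

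The only genuinely new point, and the step I expect to be the main obstacle, is this admissibility check. The local space ${N^*}(K)$ and the test fields must be compatible with the lack of normal continuity across $\Gamma$, and the sup in \eqref{ddm_local-norm} runs over $\Xcalmg_K^*$ while $d$ is evaluated with broken integrals. Because every test field vanishes on the interior boundary of its element, the broken definition of $d$ coincides with the global one and the $\Gamma$-terms of $c_{DD}$ never enter. The constants are those of Theorem~\ref{thm_apost_error_est}, since only the local bubble and inverse estimates on $K$ or $K_F$ are used, and these depend on polynomial degrees, $d$ and $\kappa_K$.
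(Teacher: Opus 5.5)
Your proposal is correct and follows the paper's route: the paper's proof says only that the argument is completely similar to that of Theorem~\ref{thm_apost_error_est}. Your check supplies the detail the paper leaves implicit. The bubble test fields $\xi_{r,K}$, $\xi_{f,K}$ and $\xi_{bc,F}$ are supported in a single element and vanish on the part of its boundary interior to $\Omega$, so they lie in $\Xcalmg_K^*$ (resp.\ $\Xcalmg_{K_F}^*$), create no jump on $\Gamma$, and leave the identity of Lemma~\ref{lemma:leandre_reconstruction} and all local bubble and inverse estimates unchanged.
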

\begin{proof}
The proof is completely similar to the proof of Theorem~\ref{thm_apost_error_est}.
\end{proof}
\everymath{\displaystyle} 
\section{Numerical experiment}
\label{sec:numerical_results}

In this section, we illustrate numerically the use of the a posteriori estimators
devised in the previous section. To this aim, we present an example of Adaptive
Mesh Refinement (AMR) on a source problem inspired by the Model 1 case 2 test case defined in~\cite{Ta91}.

\tikzset{every picture/.style={line width=0.75pt}} 

Section~\ref{sec:num_AMR} defines the adaptive mesh refinement.
Section~\ref{subsec:test_case} describes the setting of the test case. Section~\ref{subsec:results} shows the numerical results obtained.

\subsection{Adaptive mesh refinement}
\label{sec:num_AMR}
In this Section, we recall e.g. from~\cite[Section 6]{CDM25} a classical definition of an AMR strategy. 
This iterative process is divided into four modules as presented in Figure~\ref{fig:AMR_algo}, where $\eps_{\text{AMR}}>0$ is a user-defined parameter, {that accounts for the maximal element-wise tolerance error. Precisely, 
we use a relative stopping criterion which writes $\eps_{\text{AMR}}=\eps_{\text{AMR, rel}}\|\phi_h\|_{L^2(\Omega)}$, where $\eps_{\text{AMR, rel}}>0$. Each module is described below in the mono-domain setting. The extension to the multi-domain approach is then outlined. We recall that for simplicity, we present the algorithm in the case where the vacuum boundary condition is prescribed everywhere on $\partial\Omega$. The methodolody easily extends to the case where mixed boundary conditions on $\pa\Omega$ described in Appendix~\ref{sec:appendix_mixed}.
}
\begin{figure}[htbp]
\resizebox{\textwidth}{!}{
\begin{tikzpicture}
\node [draw] (Im) at (0,2) {{Initial mesh}};
\node (S) at (0,1) {\textbf{SOLVE}};
\node (E) at (3,1) {\textbf{ESTIMATE}};
\node (T) at (7,1) {$\ds \max_{K \in\Tcal_{h}} \eta_K \leq \eps_{\text{AMR}}$?};
\node (M) at (10.5,1) {\textbf{MARK}};
\node (Mv) at (9.25,0.75) {No};
\node[rotate=21] (Mv) at (8.75,1.85) {Yes};
\node (R)  at (13,1) {\textbf{REFINE}};
\node (Rf)  at (13.2,0.8) { };
\node[draw] (Rd)  at (10,2) {Stop};
\draw[->,>=latex] (Im) -- (S);
\draw[->,>=latex] (S) -- (E);
\draw[->,>=latex] (E) -- (T);
\draw[->,>=latex] (T) -- (M);
\draw[->,>=latex] (M) -- (R);
\draw[->,>=latex] (T) -- (Rd) ;
\draw[->,>=latex] (Rf) to[bend left=12] (S);
\end{tikzpicture}
}
\caption{Description of the AMR process.}
    \label{fig:AMR_algo}
\end{figure}
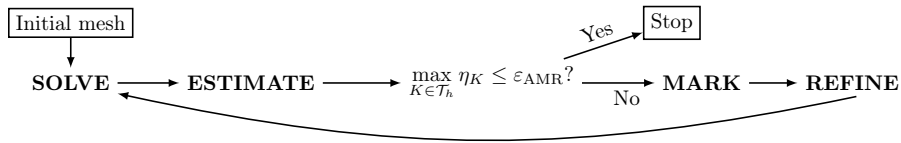

\subsubsection{SOLVE module}
For the source problem, the  \textbf{SOLVE} module amounts to solving the discrete problem~\eqref{eq:VF-1h-PC}. 
\subsubsection{ESTIMATE module}
In the  \textbf{ESTIMATE} module, the  local error indicator $\eta_K$ is computed on each mesh element $K$. Using the {\em a posteriori} error estimate~\eqref{eq:rt_ineq_1}, this error indicator is defined by
\begin{align}\label{LocalIndicator_mono}
\eta_K:= \left({\eta}^2_{r,K} +\sum_{K'\in N(K)}\eta^2_{f,K'}{+\sum_{F\in {\mathcal{F}^e_h}\cap \pa K}\eta_{bc,F}^2}\right)^{1/2}.
\end{align}

\subsubsection{MARK module}
The purpose of the  \textbf{MARK} module is to select a set of mesh elements with large error: then, these elements are refined.
In other words, the marking strategy consists in selecting a set of elements {$S\subset\Tcal_{h}$} of minimal cardinal such that one has $$\eta(S) \simeq \theta \, \eta(\Tcal_{h}), \quad \text{where} \  \eta(S) := \left(\sum_{K \in S} \eta_K^2\right)^{1/2}, \quad \mbox{resp.} \ \eta(\Tcal_{h}) := \left(\sum_{K \in \Tcal_{h}} \eta_K^2\right)^{1/2}$$ and $\theta>0$ is a user-defined parameter.
According to~\cite[Section 6]{CDM23}, an efficient 
strategy which preserves the Cartesian structure of the mesh is the \emph{direction} marker strategy. One selects for each direction $\mathbf{e}_i$, $i=1,\dots,d$, the smallest set of lines $L_{i}$ along that direction such that $\eta(L_{i}) \geq \theta \eta(\Tcal_{h})$. The resulting selected set is $\cup_{i=1,\dots,d} L_{i}$.

\subsubsection{REFINE module}\label{ss-sec_REFINEModule}
The \textbf{REFINE} module refines the mesh $\Tcal_{h}$ if the stopping criterion $\ds\max_{K \in\Tcal_{h}} \eta_K \leq \eps_{\text{AMR}}$ is not reached. 

\subsubsection{Extension to the Domain Decomposition+$L^2$-jumps method}
\label{sec:num_AMR_ddm}
The modules slightly differs in the case of the Domain Decomposition+$L^2$-jumps method.  
The discrete {multi-domain} problem~\eqref{eq:ddm_varf_discrete} is solved in the  \textbf{SOLVE} module.
Using the {\em a posteriori} error estimate~\eqref{eq:ddm_rt_ineq_1}, the local error indicator is now defined by for each $K\in\Tcal_h$ by
\begin{align}\label{LocalIndicator_multi}
\eta_K:= \left({\eta}^2_{r,K} +\sum_{K'\in N^*(K)}\eta^2_{f,K'}{+\sum_{F\in {\mathcal{F}^e_h}\cap \pa K}\eta_{bc,F}^2}\right)^{1/2}.
\end{align}
{The main difference with the mono-domain setting is that} the \textbf{MARK} module is applied \textit{independently} on each subdomain $\Omega^*_{i^*}$ with a user-defined parameter $\theta_{i^*}$, for all $1\leq i^*\leq N^*$.
{In addition}, the module \textbf{REFINE} refines, for all $1\leq i^*\leq N^*$,  the mesh $\Tcal_{h, i^*}$ if the stopping criterion is not reached locally i.e. $\ds  \max_{K \in\Tcal_{h, i^*}} \eta_K > \eps_{\text{AMR}}$.
 
\subsection{Setting of the test case}\label{subsec:test_case}
{Lengths are given in centimeters. We solve the SP$_1$ problem~\eqref{eq:diff_primal} 
in the domain $\Omega=(0,25)^3$, which is made of three different materials (core, control rod, reflector). The core is located in the region $(0,15)^3$, the control rod is located in the region $(15,20)\times(0,5)\times(0,25)$, while the reflector is in the rest of $\Omega$. The geometry is depicted in Figure~\ref{fig:TakedaM1-geom} (side and top views)}. At the boundary, vacuum and reflection (homogeneous Neumann) boundary conditions are imposed. It corresponds to the case where mixed boundary conditions are imposed on the boundary detailed in Appendix~\ref{sec:appendix_mixed}.
Rather than considering the eigenvalue problem, we consider a source problem, where the source is defined in Table~\ref{table:TakedaCase1-phy-params}.
\begin{figure}[ht]
    \centering
    \resizebox{0.9\textwidth}{!}{
    \hspace{-1mm}
    \resizebox{0.07\textwidth}{!}
    {
    \begin{subfigure}[b]{0.4\textwidth}
        \raggedleft
        \begin{tikzpicture}[scale=4./25]
        \draw[->]   (0,0) -- (0,28);
        \draw[->]   (0,0) -- (28,0);
        \draw   (0,0) -- (0,25) -- (25,25) -- (25,0) -- cycle ;
       \draw  [fill={rgb, 255:red, 85; green, 160; blue, 255 }  ,fill opacity=1 ] (0,0) -- (0,25) -- (25,25) -- (25,0) -- cycle ;
        \draw  [fill={rgb, 255:red, 179; green, 173; blue, 0 }  ,fill opacity=1 ] (0,0) -- (0,15) -- (15,15) -- (15,0) -- cycle ;
        \draw  [fill={rgb, 255:red, 155; green, 155; blue, 155 }  ,fill opacity=1 ] (15,0) -- (15,25) -- (20,25) -- (20,0) -- cycle ;
        \draw (0,15) node [anchor= east]   {$15$};
        \draw (0,25) node [anchor= east]   {$25$};
        \draw (0,0) node [anchor= north east]   {$0$};
        \draw (15,0) node [anchor= north]   {$15$};
        \draw (20,0) node [anchor= north]   {$20$};
        \draw (25,0) node [anchor= north]   {$25$};
        \draw (28,0) node [anchor= west]   {$x$};
        \draw (0,28) node [anchor= south]   {$z$};
        \draw (-5,17.5) node [anchor= east, rotate=90]   {Reflection};
        \draw (12.5,-2.5) node [anchor= north]   {Reflection};
        \draw (27.5,17.5) node [anchor= east, rotate=90]   {{Vacuum}};
        \draw (12.5,27.5) node   {{Vacuum}};
        
        \end{tikzpicture}
        \caption{Radial view}
    \end{subfigure}
    }\hspace{1mm}
    \resizebox{0.07\textwidth}{!}
    {
    \begin{subfigure}[b]{0.4\textwidth}
        \raggedleft
        \begin{tikzpicture}[scale=4./25]
        \draw[->]   (0,0) -- (0,28);
        \draw[->]   (0,0) -- (28,0);
        \draw   (0,0) -- (0,25) -- (25,25) -- (25,0) -- cycle ;
       \draw  [fill={rgb, 255:red, 85; green, 160; blue, 255 }  ,fill opacity=1 ] (0,0) -- (0,25) -- (25,25) -- (25,0) -- cycle ;
        \draw  [fill={rgb, 255:red, 179; green, 173; blue, 0 }  ,fill opacity=1 ] (0,0) -- (0,15) -- (15,15) -- (15,0) -- cycle ;
        \draw  [fill={rgb, 255:red, 155; green, 155; blue, 155 }  ,fill opacity=1 ] (15,0) -- (15,5) -- (20,5) -- (20,0) -- cycle ;
        \draw (0,5) node [anchor= east]   {$5$};
        \draw (0,15) node [anchor= east]   {$15$};
        \draw (0,25) node [anchor= east]   {$25$};
        \draw (0,0) node [anchor= north east]   {$0$};
        \draw (15,0) node [anchor= north]   {$15$};
        \draw (20,0) node [anchor= north]   {$20$};
        \draw (25,0) node [anchor= north]   {$25$};
        \draw (28,0) node [anchor= west]   {$x$};
        \draw (0,28) node [anchor= south]   {$y$};
        \draw (12.5,-2.5) node [anchor= north]   {Reflection};
        \draw (-5,17.5) node [anchor= east, rotate=90]   {Reflection};
        \draw (12.5,-2.5) node [anchor= north]   {Reflection};
        \draw (27.5,17.5) node [anchor= east, rotate=90]   {{Vacuum}};
        \draw (12.5,27.5) node   {{Vacuum}};
        \end{tikzpicture}
        \caption{Axial view}
    \end{subfigure}
    }\hspace{1mm}
    \resizebox{0.02\textwidth}{!}
    {
    \begin{subfigure}[b]{0.13\textwidth}
    \raggedleft
    \begin{tikzpicture}[scale=0.4]
    \definecolor{couleur1}{RGB}{85, 160, 255}    
    \definecolor{couleur2}{RGB}{179, 173, 0}    
    \definecolor{couleur3}{RGB}{155, 155, 155}    

    \fill[white] (0, 1) rectangle (1, 10.);
        \node[right] at (1.2, -1 +10.25 ) {Reflector};
        \node[right] at (1.2, -2 +10.25 ) {Core};
        \node[right] at (1.2, -3 +10.25 ) {Control Rod};
    \foreach \i/\couleur in {1/couleur1, 2/couleur2, 3/couleur3} {
        \fill[\couleur] (0, -\i +10 ) rectangle (1, -\i +10.5);
    }
\end{tikzpicture}
\caption{Legend}
\end{subfigure}
}
}
    \caption{The benchmark {geometry}.}
    \label{fig:TakedaM1-geom}
\end{figure}
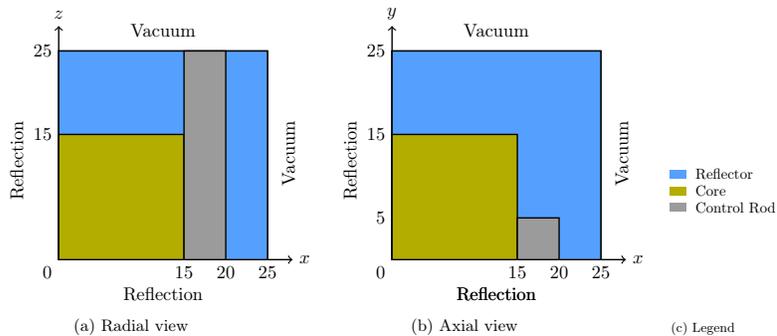

\begin{table}[!ht]
\centering
\begin{equation*}
\begin{array}{c|ccc}
\text{Source} & \text{Reflector} & \text{Core} & \text{Control rod} \\
  \toprule[\heavyrulewidth]\toprule[\heavyrulewidth]
S_f^1
& 0. & 9.09319 \times 10^{-3} & 0. \\[0.8em]
S_f^2
& 0. & 2.90183 \times 10^{-1} & 0. \\[0.8em]
\end{array}
\end{equation*}
\caption{Values of the source for the test case.}
\label{table:TakedaCase1-phy-params}
\end{table}

The reference solution is computed on a uniform mesh consisting of $80 \times 80 \times 80$ cells. The mesh step of {this reference grid is equal to $0.3125$}.

We compare three different refinement strategies: uniform refinement, AMR
with a mono-domain discretization~\cite{CDM23} and AMR with {the multi-domain approach} (DD$+L^2$ jumps method~\cite{CiDoGeMa25}). The initial mesh is {uniform} and consists of $5\times 5 \times 5$ cells. The initial mesh size is equal to $h = 5$ and the discretization is performed with RTN$_0$ and $\mathbb{Q}_0$ finite elements. In the DD$+L^2$ jumps method, we set $M_h$ as in (\ref{Mh_SufficientCondition}). The stopping criterion is set to $ \eps_\text{AMR,rel}=4.10^{-3} $.

\subsubsection{The mono-domain {setting}}
The AMR process for the mono-domain formulation {is applied} as described in Section~\ref{sec:num_AMR}. In the \textbf{ESTIMATE} module, the reconstruction is computed with the averaging method described in~\cite[Section 5.1.1]{CDM23}. {We study two configurations, denoted MONO-1, MONO-2: the value of the refinement parameter $\theta$ in the \textbf{REFINE} module is given in Table~\ref{tab:theta_monodomain}}.
\begin{table}[!ht]
\centering
\begin{tabular}{|c|c|}
\hline
 Configuration & $\theta$ \\
\hline
 MONO-1 & $0.5$ \\
 MONO-2 & $0.2$ \\
 \hline
\end{tabular}
\caption{AMR parameter {(mono-domain setting)}.}
\label{tab:theta_monodomain}
\end{table}

\subsubsection{The {multi-domain approach}}
Correspondingly, {we study two multi-domain configurations to perform} the AMR process defined in Section~\ref{sec:num_AMR_ddm} for the DD$+L^2$ jumps method. The subdivision into subdomains is designed so that each interface between two materials is also an interface for the domain decomposition:
\[
\begin{aligned}
&\Omega_1 = (0,15) \times  (0,15) \times (0,15), \\
&\Omega_2 = (15,20) \times (0,5) \times  (0,25), \\
&\Omega_3 = (20,25) \times (0,25) \times (0,25), \\
&\Omega_4 = (15,20) \times (5,25) \times (0,25), \\
&\Omega_5 = (0,15) \times  (15,25) \times (0,25), \\
&\Omega_6 = (0,15) \times  (0,15) \times (15,25).
\end{aligned}
\]
The {subdomains and the initial mesh are} represented in Figure~\ref{fig:TakedaM1C2-DDM-initial}.
\begin{figure}[htbp]
    \centering
    \resizebox{0.9\textwidth}{!}{
    \hspace{-3mm}
    \resizebox{0.03\textwidth}{!}{
    \begin{subfigure}{0.4\textwidth}
\begin{tikzpicture}
    \node at (0,0) {\includegraphics[width=5.cm]{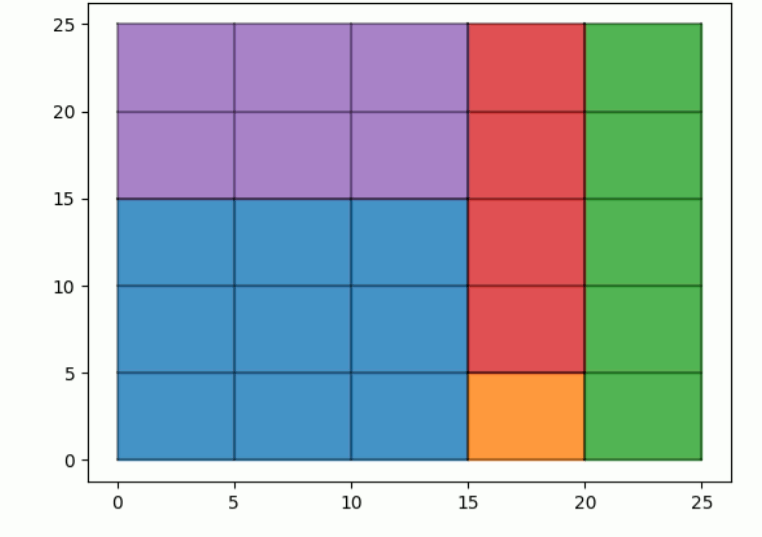}};
    \node[left] at (-2.25,0.125) {$y$};
    \node[below] at (0.125,-1.75) {$x$};
\end{tikzpicture}
        \centering
        \caption{Radial mesh at $0 < z < 15.$}
        \label{fig:TakedaM1C2-DDM-initial-1}
    \end{subfigure}
    }
        \resizebox{0.03\textwidth}{!}{
    \begin{subfigure}{0.4\textwidth}
        \centering
\begin{tikzpicture}
    \node at (0,0) {\includegraphics[width=4.775cm]{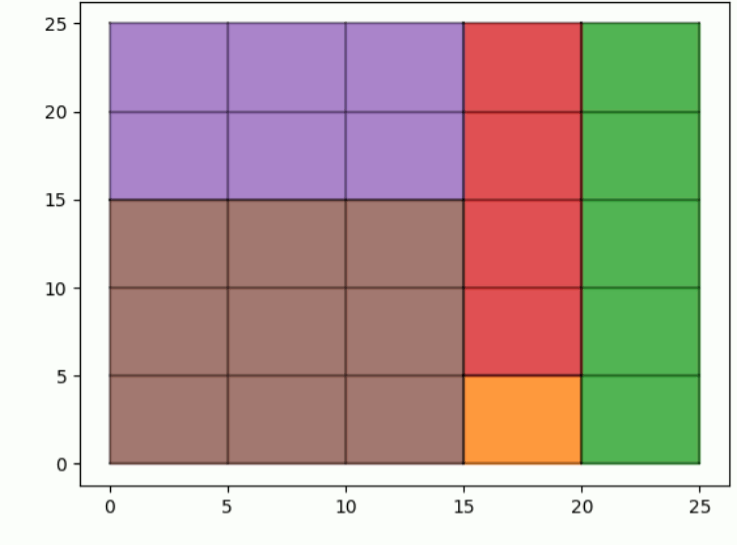}};
    \node[left] at (-2.25,0.125) {$y$};
    \node[below] at (0.2,-1.75) {$x$};
\end{tikzpicture}
        \caption{Radial mesh at $15 < z < 25.$}
        \label{fig:TakedaM1C2-DDM-initial-2}
    \end{subfigure}
    }
    \hspace{0.125mm}
        \resizebox{0.01125\textwidth}{!}{
    \begin{subfigure}{0.125\textwidth}
    \centering
    \begin{tikzpicture}[scale=0.4]
    \definecolor{couleur1}{RGB}{76, 146, 195}    
    \definecolor{couleur2}{RGB}{255, 152, 62}    
    \definecolor{couleur3}{RGB}{86, 179, 86}    
    \definecolor{couleur4}{RGB}{222, 82, 83}  
    \definecolor{couleur5}{RGB}{169, 133, 202}  
    \definecolor{couleur6}{RGB}{163, 120, 111}  

        \fill[white] (0, -1) rectangle (1, -9.5);

    \foreach \i/\couleur in {1/couleur1, 2/couleur2, 3/couleur3, 4/couleur4, 5/couleur5, 6/couleur6} {
        \fill[\couleur] (0, -\i ) rectangle (1, -\i -0.5);
        \node[right] at (1.2, -\i -0.25 ) {$\Omega_{\i}$};
    }
\end{tikzpicture}
\caption{Legend}
\end{subfigure}
}
}
    \caption{{Subdomains and initial mesh for the multi-domain approach}. 
}
    \label{fig:TakedaM1C2-DDM-initial}
\end{figure}

{We study again two configurations, now denoted DDM-1,DDM-2}. In the \textbf{ESTIMATE} module, the reconstruction is computed by the averaging method described in Section~\cite[Section 6.1.2]{CiDoGeMa25}. {We recall that, in the \textbf{REFINE} module, the refinement parameter $\theta_{i^*}$ is defined by subdomain: the values are given in Table~\ref{tab:theta-AMR-DDM}.}
\begin{table}[htbp]
\centering
\begin{tabular}{|c|c|c|c|}
\hline
 &  DDM-1 & DDM-2 
  \\
\hline
$\Omega_{1}$ & $0.5$ & $0.2$ 
\\
$\Omega_{2}$ & $0.5$& $0.7$  \\
$\Omega_{3}$ & $0.5$&  $0.2$  \\
$\Omega_{4}$ & $0.5$& $0.2$  \\
$\Omega_{5}$ & $0.5$&  $0.2$  \\
$\Omega_{6}$ & $0.5$&  $0.2$  \\
\hline
\end{tabular}
\caption{AMR refinement parameters {$(\theta_{i^*})_{1\leq i^*\leq 6}$, for the multi-domain approach}.}
\label{tab:theta-AMR-DDM}
\end{table}

\subsection{Numerical illustration}
\label{subsec:results}
Figure~\ref{fig:TakedaM1C2-Err_Nelem} shows the decrease of the relative error in the 
$\|\cdot\|_S$ norm in the different AMR processes. We observe that the {multi-domain approach reaches} a better accuracy with less mesh elements. Likewise, Figure~\ref{fig:TakedaM1C2-Err_Nelem} also shows the maximum of the local error indicator (\ref{LocalIndicator_mono})-(\ref{LocalIndicator_multi}) for the different AMR processes. After the AMR processes are completed, we see that there is at least a factor 6 between the {total number of mesh elements, compared to the uniform refinement}. We {also emphasize} that there is a factor 3 between the {(final) total number} of mesh elements of the DDM-2 configuration, {compared to} the MONO-1 configuration. We notice the sensitivity with respect to the refinement parameter $\theta$, as discussed in~\cite[Section 6.4]{CDM23}.

During the AMR process, Table~\ref{tab:5} shows that, for mesh elements containing a boundary facet, the relative contribution of the Robin boundary condition estimator is not dominant. We also observe that the maximum of the total estimator over the mesh elements containing a boundary facet becomes negligible compared to the maximum of the total estimator over all the mesh elements, which seems to indicate that the refinement is driven by resolving the solution accurately enough at the interface between different materials.
\begin{figure}[H]
\centering
\resizebox{0.8\textwidth}{!}{
\hspace{-7mm}
\resizebox{0.15\textwidth}{!}{
\begin{subfigure}[t]{0.49\linewidth}
  \raggedleft
\begin{tikzpicture}
    \node at (0,0) {\includegraphics[width=8cm]{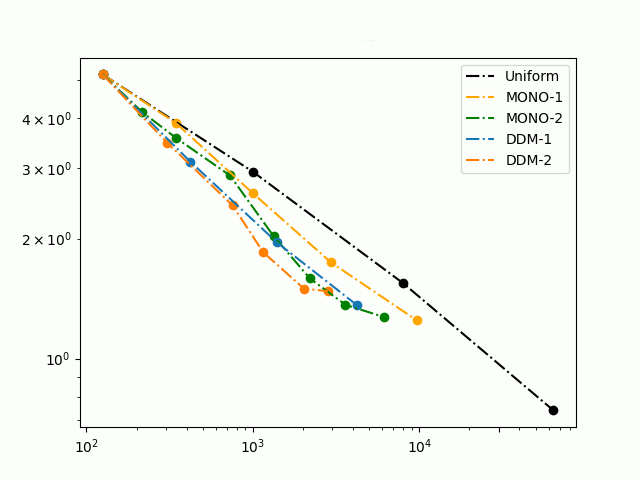}};
    \node[left, rotate=90] at (-4,1.) {Relative error (\%)};
    \node[below] at (0,-2.75) {{Total number of mesh elements}};
\end{tikzpicture}
  \label{fig:TakedaM1C2-Err_Nelem-a}
\end{subfigure}
}\hspace{5mm}
\resizebox{0.15\textwidth}{!}{
\begin{subfigure}[t]{0.49\linewidth}
  \raggedright
\begin{tikzpicture}
    \node at (0,0) {\includegraphics[width=8cm]{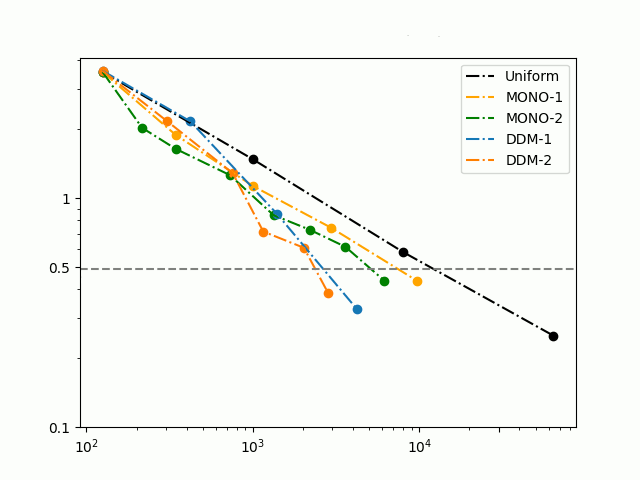}};
    \node[left, rotate=90] at (-3.7,1.4) {$\displaystyle \max_{K\in\Tcal_h} \eta_K$};
    \node[left] at (-3.3,-0.35) {\tiny $ \eps_\text{AMR}$};
    \node[below] at (0,-2.75) {{Total number of mesh elements}};
\end{tikzpicture}
  \label{fig:TakedaM1C2-Err_Nelem-b}
\end{subfigure}
}
}
\caption{Relative error in the $\|\cdot\|_S$ norm (left) and maximum of the total estimator (right) as a function of the {total number} of mesh elements.}
\label{fig:TakedaM1C2-Err_Nelem}
\end{figure}
Figures~\ref{fig:TakedaM1C2-MONO-meshfinal},~\ref{fig:TakedaM1C2-DDM-1-meshfinal} and~\ref{fig:TakedaM1C2-DDM-2-meshfinal} respectively show the final meshes of the mono-domain, DDM-1 and DDM-2 {multi-domain} configurations. We observe that refinement mostly takes place near the material interfaces. The DDM-based refinement is able to {focus on} this interface-focused refinement, which confirms its relevance when a more localized and {physics aware} refinement is required.
\begin{figure}[!ht]
\resizebox{\textwidth}{!}{
    \begin{subfigure}{0.4\textwidth}
        \centering
\begin{tikzpicture}
    \node at (0,0) {\includegraphics[width=4cm]{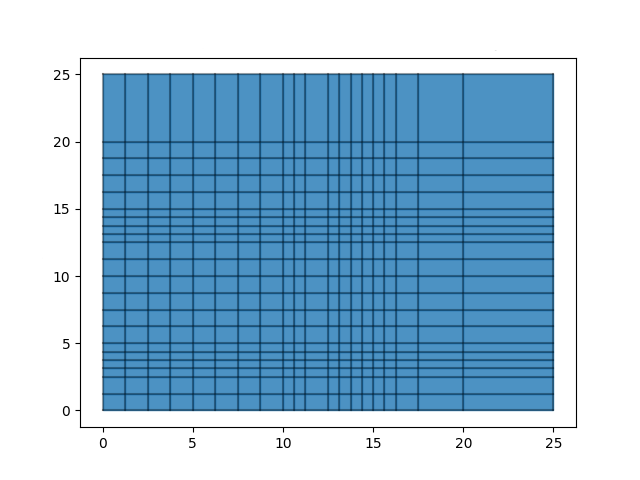}};
    \node[left] at (-1.75,0.) {$y$};
    \node[below] at (0,-1.25) {$x$};
\end{tikzpicture}
        \caption{MONO-1: Radial mesh.}
        \label{fig:TakedaM1C2-MONO-1-radial}
    \end{subfigure}
 \hspace{7mm}
    \begin{subfigure}{0.4\textwidth}
        \centering
\begin{tikzpicture}
    \node at (0,0) {\includegraphics[width=4cm]{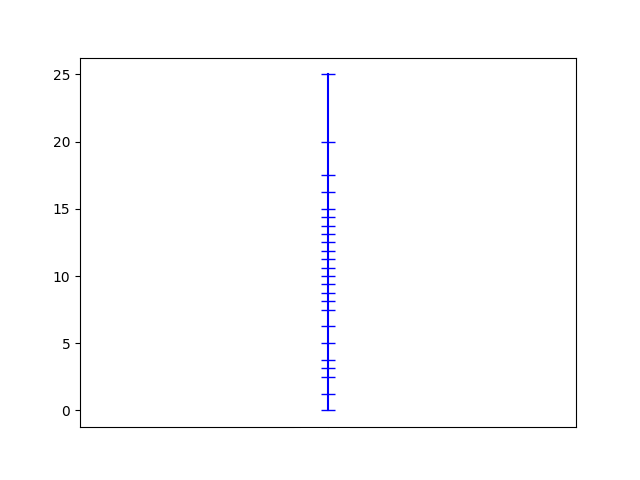}};
    \node[left] at (-1.75,0.) {$z$};
\end{tikzpicture}
        \caption{{MONO-1: Axial mesh.}}
        \label{fig:TakedaM1C2-MONO-1-axial}
    \end{subfigure}
    \hspace{7mm}
    \begin{subfigure}{0.4\textwidth}
        \centering
\begin{tikzpicture}
    \node at (0,0) {\includegraphics[width=4cm]{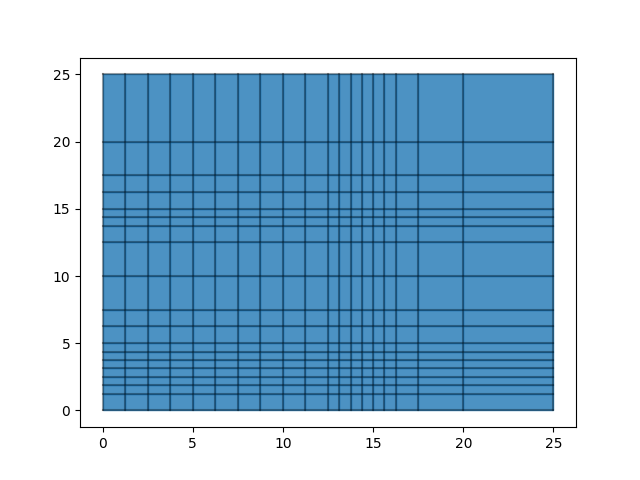}};
    \node[left] at (-1.75,0.) {$y$};
    \node[below] at (0,-1.25) {$x$};
\end{tikzpicture}
        \caption{MONO-2: Radial mesh.}
        \label{fig:TakedaM1C2-MONO-2-radial}
    \end{subfigure}
 \hspace{7mm}
    \begin{subfigure}{0.4\textwidth}
        \centering
\begin{tikzpicture}
    \node at (0,0) {\includegraphics[width=4cm]{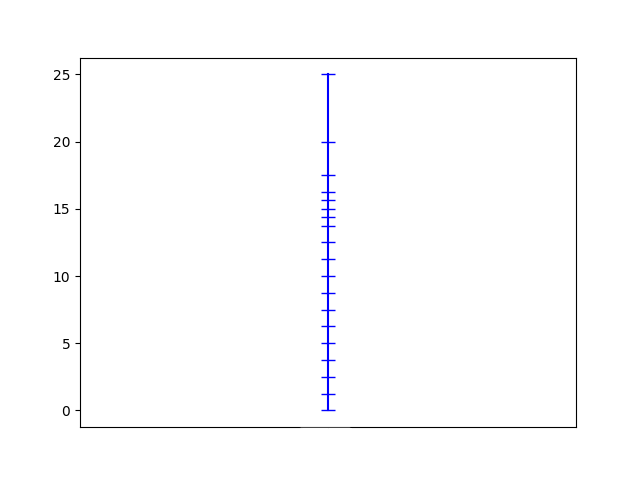}};
    \node[left] at (-1.75,0.) {$z$};
\end{tikzpicture}
        \caption{MONO-2: Axial mesh.}
        \label{fig:TakedaM1C2-MONO-2-axial}
    \end{subfigure}
    }
    \caption{{Final mesh for the mono-domain configurations.}}
    \label{fig:TakedaM1C2-MONO-meshfinal}
\end{figure}

\begin{table}[H]
  \begin{tabular}{|c|ccccc|} 
  \hline
MONO-1 & & &  & &  \\
  \hline
      { \textbf{Iteration}}                              & $|\Tcal_h|$  & $ \displaystyle \boldsymbol{\max_{K\in \Tcal_{h}} \eta_{K} }$ & $ \displaystyle \boldsymbol{\max_{F\in \mathcal{F}^e_h} \eta_{K_F} }$ & $ \displaystyle \boldsymbol{\max_{F\in \mathcal{F}^e_h} \eta_{bc,F}}$ 
      & $ \displaystyle \boldsymbol{\max_{F\in \mathcal{F}^e_h} \frac{\eta_{bc,F}}{\eta_{K_F}}}$    \\
 \hline 
0	&	125	&	3.56	&	1.28	&	0.0595	& 0.0687\\
1	&	343	&	1.89	&	0.865	&	0.0742	 & 0.153 \\
2	&	1000	&	1.13	&	0.550	&	0.0452 & 0.534	    \\
3	&	2940	&	0.744	&	0.189	&	0.0331	 & 0.592   \\
4	&	\cellcolor{blue!25}9660	&	0.437	&	0.0872	    &	0.0217	& 0.497   	\\
\hline
  \end{tabular}
  \caption{{{MONO-1 configuration: Influence of the Robin boundary condition estimator}}. 
  } 
    \label{tab:5}
\end{table}

\begin{figure}[H]
\resizebox{\textwidth}{!}{
\resizebox{0.05\textwidth}{!}{
    \begin{subfigure}{0.4\textwidth}
        \raggedleft
\begin{tikzpicture}
    \node at (0,0) {\includegraphics[width=6cm]{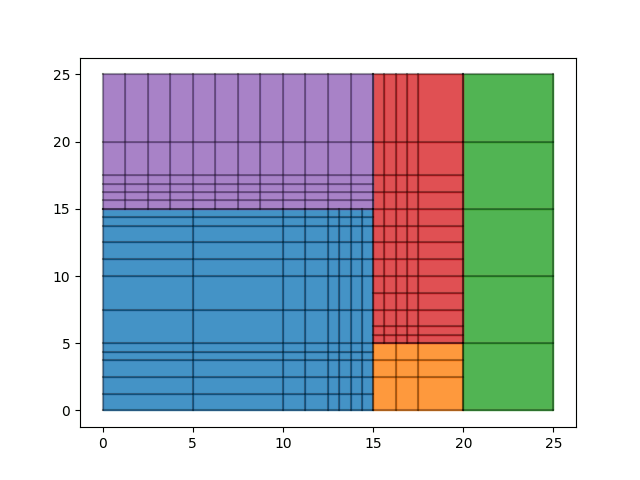}};
    \node[left] at (-2.5,0.) {$y$};
    \node[below] at (0,-2.) {$x$};
\end{tikzpicture}
        \caption{Radial mesh at  $0 < z < 15.$}
    \end{subfigure}
   }
   \resizebox{0.05\textwidth}{!}{
    \begin{subfigure}{0.4\textwidth}
        \centering
\begin{tikzpicture}
    \node at (0,0) {\includegraphics[width=6cm]{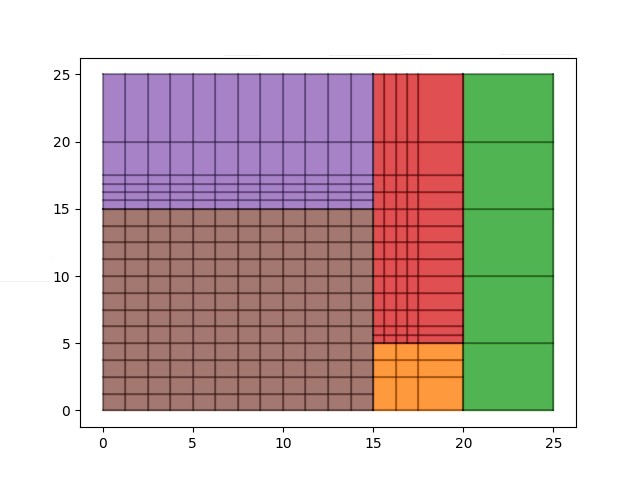}};
    \node[left] at (-2.5,0.) {$y$};
    \node[below] at (0,-2.) {$x$};
\end{tikzpicture}
        \caption{Radial mesh at $15 < z < 25.$}
    \end{subfigure}
  }
  \resizebox{0.05\textwidth}{!}{
    \begin{subfigure}{0.4\textwidth}
        \raggedright
\begin{tikzpicture}
    \node at (0,0) {\includegraphics[width=6cm]{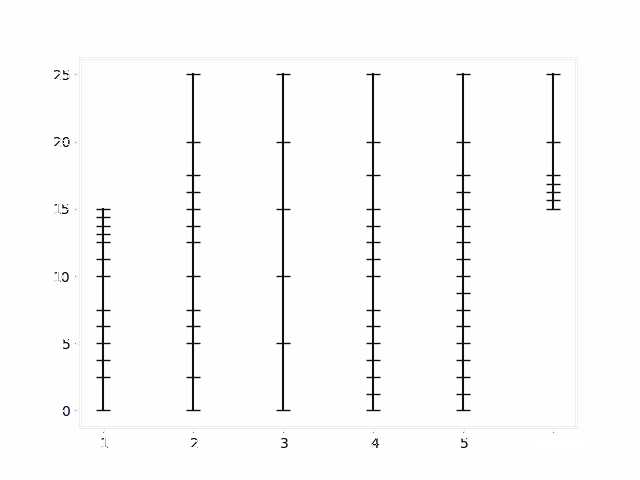}};
    \node[left] at (-2.5,0.) {$z$};
    \node[below] at (0,-2.) {Index of subdomain};
\end{tikzpicture}
        \caption{Axial mesh.}
    \end{subfigure}
    }
    }
    \caption{{Final mesh for the DDM-1 {multi-domain} configuration.}}
    \label{fig:TakedaM1C2-DDM-1-meshfinal}
\end{figure}
\begin{figure}[H]
   \resizebox{\textwidth}{!}{
   \resizebox{0.05\textwidth}{!}{
    \begin{subfigure}{0.4\textwidth}
        \raggedleft
\begin{tikzpicture}
    \node at (0,0) {\includegraphics[width=6cm]{pictures_TakedaM1C2_DDM_1_mesh-final-radial-slice0_fm.png}};
    \node[left] at (-2.5,0.) {$y$};
    \node[below] at (0,-2.) {$x$};
\end{tikzpicture}
        \caption{Radial mesh at $0 < z < 15.$}
    \end{subfigure}
}
   \resizebox{0.05\textwidth}{!}{
    \begin{subfigure}{0.4\textwidth}
        \centering
\begin{tikzpicture}
    \node at (0,0) {\includegraphics[width=6cm]{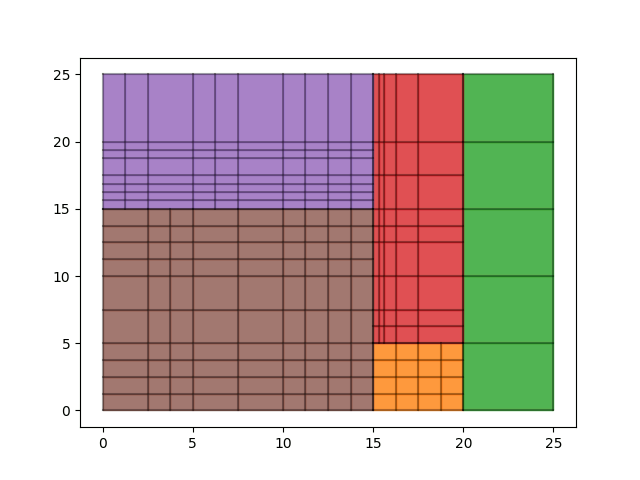}};
    \node[left] at (-2.5,0.) {$y$};
    \node[below] at (0,-2.) {$x$};
\end{tikzpicture}
        \caption{Radial mesh at $15 < z < 25.$}
    \end{subfigure}
}
   \resizebox{0.05\textwidth}{!}{
    \begin{subfigure}{0.4\textwidth}
        \raggedright
\begin{tikzpicture}
    \node at (0,0) {\includegraphics[width=6cm]{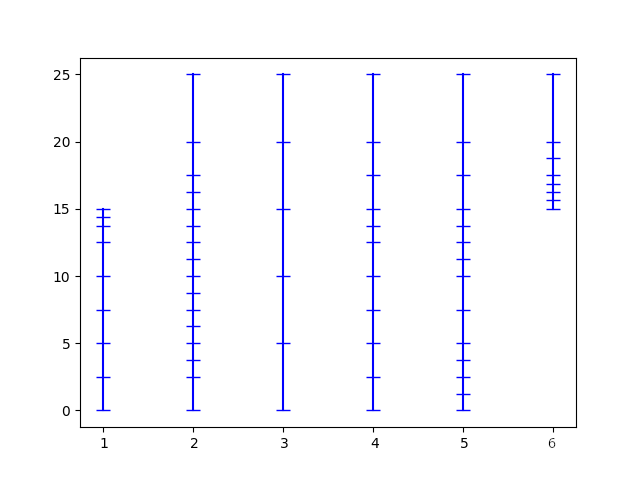}};
    \node[left] at (-2.5,0.) {$z$};
    \node[below] at (0,-2.) {Index of subdomain};
\end{tikzpicture}
        \caption{Axial mesh.}
    \end{subfigure}
    }
    }
    \caption{{Final mesh for the DDM-2 {multi-domain} configuration.}}
    \label{fig:TakedaM1C2-DDM-2-meshfinal}
\end{figure}

Finally, Table~\ref{tab:3} details the convergence of AMR {for the multi-domain configurations} on each subdomain $\Omega^*_{i^*}$, $1\leq i^*\leq 6$. We observe that the convergence of the AMR process focuses on the subdomains 1, 4, 5 and 6, which corresponds to the core and the {reflector around it}.

\begin{table}[H]
  \resizebox{\textwidth}{!}{%
  \begin{tabular}{|c|ccccccc|} 
  \hline
DDM-1 & & &  & &  & & \\
  \hline
      { \textbf{Iteration}}                              & $|\Tcal_h|$  & $ \displaystyle \boldsymbol{\max_{K\in \Tcal_{h,1}} \eta_{K} }$ & $ \displaystyle \boldsymbol{\max_{K\in \Tcal_{h,2}} \eta_{K} }$ & $ \displaystyle {\boldsymbol{\max_{K\in \Tcal_{h,3}}} \eta_{K} }$ & $ \displaystyle \boldsymbol{\max_{K\in \Tcal_{h,4}} \eta_{K} }$ & $ \displaystyle \boldsymbol{\max_{K\in \Tcal_{h,5}} \eta_{K} }$ & $ \displaystyle \boldsymbol{\max_{K\in \Tcal_{h,6}} \eta_{K} }$   \\
 \hline 
0	&	125	&	3.60	&	1.48	&	0.123	&	1.42	&	1.21	&	1.24		\\
1	&	421	&	1.44	&	0.590	&	-	    &	1.05	&	1.05	&	2.17	\\
2	&	1395	&	0.713	&	0.300	&	-	    &	0.701	&	0.806	&	0.855\\
3	&	\cellcolor{blue!25}4211	&	0.329	&	-	    &	-	    &	0.269	&	0.284	&	0.321		\\
\hline
DDM-2 & & &  & &  & & \\
  \hline
      { \textbf{Iteration}}                              & $|\Tcal_h|$  & $ \displaystyle \boldsymbol{\max_{K\in \Tcal_{h,1}} \eta_{K} }$ & $ \displaystyle \boldsymbol{\max_{K\in \Tcal_{h,2}} \eta_{K} }$ & $ \displaystyle {\boldsymbol{\max_{K\in \Tcal_{h,3}} \eta_{K} }}$ & $ \displaystyle \boldsymbol{\max_{K\in \Tcal_{h,4}} \eta_{K} }$ & $ \displaystyle \boldsymbol{\max_{K\in \Tcal_{h,5}} \eta_{K} }$ & $ \displaystyle \boldsymbol{\max_{K\in \Tcal_{h,6}} \eta_{K} }$   \\
 \hline 
0	&	125	&	3.60	&	1.48	&	0.123	&	1.42	&	1.21	&	1.24		\\
1	&	305	&	1.63	&	0.591	&	-	    &	1.19	&	1.78	&	2.17	\\
2	&	756	&	0.907	&	0.208	&	-	    &	1.29	&	0.982	&	1.01\\
3	&	1155	&	0.715	&	-	    &	-	    &	0.634	&	0.552	&	0.530		\\
4	&	2027	&	0.376	&	-	    &	-	    &	0.357	&	0.607	&	0.572		\\
5	&	\cellcolor{blue!25}2833	&	 -	    &	-	    &	-	    &	-	    &	0.386	&	0.235		\\
\hline
  \end{tabular}
  } 
  \caption{{AMR convergence for the multi-domain approach}. 
  } 
    \label{tab:3}
\end{table}

\begin{remark} On another perspective, one of the well-known advantages of the multi-domain approach is to allow for parallelization (not implemented here). \end{remark}

\section{Conclusion}
In this manuscript, we derive {\em a posteriori} estimates associated to an appropriate norm for the numerical solution of the multigroup neutron simplified transport equation {in mixed form} with vacuum boundary conditions imposed on (part of) the boundary. 
 We propose {\em a posteriori} estimators that are both reliable and locally efficient, {which requires the design of a specific component of the estimator to handle the vacuum boundary condition}.\\
We explicitly state the {\em a posteriori} estimates in the specific case of the multigroup neutron diffusion equation. We extend {\em a posteriori} estimates associated to different norms for the DD+$L^2$ jumps method, {a multi-domain approach}. \\

\bibliographystyle{plain}
\bibliography{Bibliography.bib}

\appendix
\section{{A model with} mixed boundary conditions}
\label{sec:appendix_mixed}
In this section, we describe how a posteriori estimation theory can be extended to the case where mixed boundary conditions are imposed on the boundary. For the sake of readability, we keep the same notations as in the manuscript. We split the boundary into three disjoint, open parts such that $\pa \Omega = \overline{\Gamma_D}\cup\overline{\Gamma_V}\cup\overline{\Gamma_N}$, where {$\Gamma_D$, $\Gamma_V$, $\Gamma_N$} are (possibly non-empty) Lipschitz submanifolds of $\partial\Omega$. In mixed form, the neutron multigroup SP$_N$ problem writes:
\begin{equation*}
\left\{\begin{array}{l}
\mbox{Find $(\pvec,\phi)\in{\Qududvec{}_m(\Omega)\times {\Vudud}{}_m}$ such that}\cr
\T_o\,\pvec\,+\,\H\grad\phi=0\mbox{ in }\Omega,\cr
\H^T\dive\pvec\,+\,\T_e\phi=S_{f}\mbox{ in }\Omega,\cr
{-\H^T\pvec\cdot \nvec + \Gamma_e\phi=0\mbox{ on }\Gamma_V,} \cr
{\phi=0\mbox{ on }\Gamma_D,}\cr
{\pvec\cdot \nvec =0\mbox{ on }\Gamma_N,}
\end{array}\right.
\end{equation*}
where 
\[\begin{array}{rcl}
\Qvec_m(\Omega) &=& \left\{\, \qvec\in\Hvec(\dive,\Omega) \, | (\qvec\cdot\nvec)_{|_{\Gamma_V}}\in L^2(\Gamma_V), \, (\qvec\cdot\nvec)_{|_{\Gamma_N}} = 0 \right\},\cr
& & \|\qvec\|_{\Qvec_m(\Omega)}=\left(\|\qvec\|_{\Hvec(\dive,\Omega)}^2\,+\,\|\qvec\cdot\nvec\|_{0,\Gamma_V}^2\right)^{1/2};\cr
V_m &=& \{\psi\in H^1(\Omega) \, | \psi_{|_{\Gamma_D}} = 0 \}. 
\end{array}\]
We also introduce
\[ \Xcal_m = \left\{\,(\qvec,\psi)\in\Qvec_m(\Omega)\times L^2(\Omega)\right\}\,,\ \|(\qvec,\psi)\|_{\Xcal_m}=\left(\|\qvec\|_{\Qvec_m(\Omega)}^2\,+\,\|\psi\|_{0,\Omega}^2\right)^{1/2}\,. \]
Following~\cite[Section 4.1]{JCi13} or the Appendix of \cite{CiJK17} for a justification of the integration by parts formula in the case of mixed boundary conditions, one can check that the corresponding bilinear form is defined for all $(\pvec,\phi),(\qvec,\psi) \in \Xcalmg{}_m$ by
\begin{align*}
((\pvec,\phi),(\qvec,\psi)) &\mapsto 
-(\T_o\,\pvec,\qvec)_{0,\Omega}
+(\phi,\H^T\dive\qvec)_{0,\Omega}
+(\psi,\H^T\dive\pvec)_{0,\Omega}\\
&
\qquad +(\T_e\,\phi,\psi)_{0,\Omega}
{- (\tilde{\Gamma}_e(\pvec\cdot\nvec), (\qvec\cdot\nvec))_{0,\Gamma_V}},
\end{align*}
and the variational formulation is similar to (\ref{eq:VF-3-PC}). The discrete, finite-dimensional, conforming spaces $\Xcal_{m,h} = \Qvec_{m,h}\times L_h$ are simply built with $\Qvec_{m,h} = \Qvec_h \cap \Qvec_m(\Omega)$, where $\Qvec_h$ and $L_h$ are introduced in Section~\ref{ss-sec-FEM-PC}. The conforming discretization of the variational formulation is classical (and omitted here). The discrete solution is denoted $\zeta_h=(\pvec_h,\phi_h)$. Then, one can prove that Theorems~\ref{th:VF-1-PC} and~\ref{th:disc+udisc-PC} are also valid in the case of mixed boundary conditions, using the same maps as defined in their respective proof.

Next, let $\tilde{\zeta}_h= (\pvec_h,\tilde{\phi}_h) \in \Qududvec{}_{m,h}\times \Vudud_m$ be a reconstruction of $\zeta_h$. Due to the mixed boundary conditions, the definition of the interpolation is slightly modified. We detail the case of the averaging operator of the neutron flux where 
$\mathcal{I}_{av} : \Pudud_k(\mathcal{T}_h) \to \Pudud_{k+1}(\mathcal{T}_h) \cap V_m$  is such that $\forall \phi_h\in\Pudud_k(\mathcal{T}_h),$
\begin{equation*}
  \forall a\in\mathcal{V}_h^{k+1},\quad \mathcal{I}_{av}(\phi_h)(a)=
 \left\{\begin{aligned}
 &{\frac{1}{|\mathcal{T}_a|} \displaystyle \sum_{K\in \mathcal{T}_a} ({\Gamma_e^{-1}\H^T(\pvec_h\cdot\nvec)}){}_{|K}(a) \quad \text{ if }a\in\overline{\Gamma_V}\setminus\overline{\Gamma_D},}\\ 
 &\frac{1}{|\mathcal{T}_a|} \displaystyle \sum_{K\in \mathcal{T}_a} \phi_h{}_{|K}(a)\quad \text{ otherwise.}
 \end{aligned}\right.
\end{equation*}
We finally define the strenghtened norm as in (\ref{strenghtened-norm}), the only difference being that the sum over faces is now taken for $F \in \mathcal{F}^e_h\cap\overline{\Gamma_V}$, while the local $|\cdot|_{+,K}$-norm remains defined as in (\ref{local-norm}).
\begin{theorem}[reliability]\label{theorem:Mixed_norm+K}
{Let $\zeta$ be the solution to~\eqref{eq:VF-3-PC}. With the same notation as in definition~\ref{definition:estimators}}, one has the estimate
\begin{align*}
&|\zeta-\tilde{\zeta}_h|_{+,K}\leq \left({\eta}^2_{r,K} +\sum_{K'\in N(K)}\eta^2_{f,K'}{+\sum_{F\in {\mathcal{F}_h^e}\cap \overline{\Gamma_V}\cap \pa K}\eta_{bc,F}^2}\right)^{1/2}.
\end{align*}
\end{theorem}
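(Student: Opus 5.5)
The proof will follow that of Theorem~\ref{theorem:norm+K}, with two changes. First, Lemma~\ref{lemma:leandre_reconstruction} has to be re-derived in the mixed setting. Second, one has to check that the boundary integral only sees $\Gamma_V$.

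\textbf{Step 1 (the identity).} Take $\xi=(\qvec,\psi)\in\Xcalmg{}_m$. Since $\zeta$ solves the mixed-boundary variational formulation, we have $d(\zeta,\xi)=(S_f,\psi)_{0,\Omega}$. Expanding $d(\tilde\zeta_h,\xi)$ gives
\begin{align*}
d(\zeta-\tilde{\zeta}_h,\xi)
&=(S_f-\H^T\dive\pvec_h-\T_e\tilde\phi_h,\psi)_{0,\Omega}-(\T_o\pvec_h,\qvec)_{0,\Omega}\\
&\quad+(\tilde\phi_h,\H^T\dive\qvec)_{0,\Omega}-(\tilde{\Gamma}_e(\pvec_h\cdot\nvec),(\qvec\cdot\nvec))_{0,\Gamma_V}.
\end{align*}
We then integrate by parts the term $(\tilde\phi_h,\H^T\dive\qvec)_{0,\Omega}$, using $\tilde\phi_h\in\Vudud_m$ and $\qvec\in\Qududvec{}_m(\Omega)$. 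The resulting boundary term $(\H\tilde\phi_h,\qvec\cdot\nvec)_{\pa\Omega}$ splits into three pieces. On $\Gamma_D$ it vanishes because $\tilde\phi_h=0$ there. On $\Gamma_N$ it vanishes because $\qvec\cdot\nvec=0$ there. On $\Gamma_V$ it survives, and there $\qvec\cdot\nvec\in L^2$.

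This splitting is the delicate point. In general the pairing is only an $H^{1/2}$--$H^{-1/2}$ duality, and splitting it across the three parts of $\pa\Omega$ must be justified. We rely on the integration-by-parts formula for mixed boundary conditions, as in \cite[Section 4.1]{JCi13} or the Appendix of \cite{CiJK17}. With it we obtain
\[
d(\zeta-\tilde{\zeta}_h,\xi)=(S_f-\H^T\dive\pvec_h-\T_e\tilde\phi_h,\psi)_{0,\Omega}-(\T_o\pvec_h+\H\grad\tilde\phi_h,\qvec)_{0,\Omega}+(\H\tilde\phi_h-\tilde\Gamma_e(\pvec_h\cdot\nvec),\qvec\cdot\nvec)_{0,\Gamma_V}.
\]

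\textbf{Step 2 (localization and Cauchy--Schwarz).} Now take $\xi$ with $\mathrm{Supp}(\psi)\subset K$ and $\mathrm{Supp}(\qvec)\subset N(K)$. The boundary term then reduces to a sum over the faces $F\in\mathcal F_h^e\cap\overline{\Gamma_V}\cap\pa K$, as in the proof of Theorem~\ref{theorem:norm+K}.

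On each piece we insert the weights. On $K$ we write $\delta_e^{-1/2}\cdot\delta_e^{1/2}$, and on each $K'$ we write $\delta_o^{-1/2}\cdot\delta_o^{1/2}$. On each face $F$ we write
\[
(\delta^{max}_{o,K_F}h_{\perp F})^{-1/2}\tilde\Gamma_e^{-1/2}\cdot(\delta^{max}_{o,K_F}h_{\perp F})^{1/2}\tilde\Gamma_e^{1/2}.
\]
Applying Cauchy--Schwarz on each piece yields the products $\eta_{r,K}\|\delta_e^{1/2}\psi\|_{0,K}$, $\eta_{f,K'}\|\delta_o^{1/2}\qvec\|_{0,K'}$ and $\eta_{bc,F}(\delta^{max}_{o,K_F}h_{\perp F})^{1/2}\|\tilde\Gamma_e^{1/2}(\qvec\cdot\nvec)\|_{0,F}$.

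\textbf{Step 3 (conclusion).} A discrete Cauchy--Schwarz inequality in $\R^m$ bounds the sum of these products by the estimator times a second factor. That factor is at most $\|\xi\|_S$, where $\|\cdot\|_S$ is the modified norm whose face sum runs over $\mathcal F^e_h\cap\overline{\Gamma_V}$; the divergence term can simply be dropped. Taking the supremum over $\|\xi\|_S\le1$ in the definition \eqref{local-norm} gives the claim.
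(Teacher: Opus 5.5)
Your route is the paper's route: its proof of this statement just points back to Theorem~\ref{theorem:norm+K}. Steps 1 and 3 are sound, and justifying the splitting of the boundary pairing through the mixed integration-by-parts formula is the right move. But Step 2 has a genuine gap, and the paper's proof of Theorem~\ref{theorem:norm+K} contains the same one. The problem is the sentence claiming that the boundary term reduces to a sum over $F\in\mathcal F_h^e\cap\overline{\Gamma_V}\cap\pa K$. A local test pair $\xi=(\qvec,\psi)$ only satisfies $\mathrm{Supp}(\qvec)\subset\bigcup_{K'\in N(K)}\overline{K'}$. Conformity in $\Qududvec{}_m(\Omega)$ forces $\qvec\cdot\nvec=0$ on the interior boundary of this patch and on $\Gamma_N$, and nothing more. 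If a neighbour $K'\in N(K)$ has a facet $F'\subset\overline{\Gamma_V}$ with $F'\not\subset\pa K$, then $\qvec\cdot\nvec$ may be non-zero on $F'$. The term $(\H\tilde\phi_h-\tilde\Gamma_e(\pvec_h\cdot\nvec),\qvec\cdot\nvec)_{0,F'}$ is then controlled by nothing on your right-hand side.

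This matters: the bound as written can fail. Work in the pure Robin case $\Gamma_V=\pa\Omega$. Take an element $K$ with no facet on $\pa\Omega$ whose neighbour $K'$ has a facet $F'\subset\pa\Omega$. Replace $\tilde\phi_h$ by $\tilde\phi_h+tX$, with $X\in\underline{\underline{\R}}$ a non-zero constant and $t>0$.
\begin{itemize}
\item The flux estimators $\eta_{f,K''}$ do not change, and there is no $\eta_{bc}$ term.
\item $\eta_{r,K}$ grows by at most $t\|\delta_e^{-1/2}\T_eX\|_{0,K}$, which is of order $t\,h^{d/2}$.
\item Test with $\xi=(\qvec,0)$, where $\qvec$ is $\H X$ times the lowest-order RTN basis function attached to $F'$ in $K'$, extended by zero. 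The $t$-dependent part of $d(\zeta-\tilde\zeta_h,\xi)$ is $t\,|F'|\,|\H X|^2$, while $\|\xi\|_S$ is of order $h^{d/2}$.
\end{itemize}
Hence $|\zeta-\tilde\zeta_h|_{+,K}$ grows like $t\,h^{d/2-1}$. For $h$ small relative to the coefficient bounds and $t$ large, the claimed inequality is violated.

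The repair is to let the last sum run over all $F\in\mathcal F_h^e\cap\overline{\Gamma_V}$ with $F\subset\pa K'$ for some $K'\in N(K)$. With this enlarged face set, your Cauchy--Schwarz argument in Steps 2 and 3 goes through verbatim, because $\|\xi\|_S$ already contains every facet on $\overline{\Gamma_V}$.
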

\begin{proof}
The proof follows the same pattern as the proof of Theorem~\ref{theorem:norm+K}.
\end{proof}
\begin{theorem}
[efficiency]
\label{Mixed_thm_apost_error_est}
Let Assumption~\ref{assumption:locality_polynomial} be fulfilled. 
For $K \in \Tcal_h$, let ${\eta}_{r,K}$ and ${\eta}_{f,K}$ be the residual and flux estimators respectively given by~\eqref{eq:def_residual_estimator}, and~\eqref{eq:def_flux_estimator}. The following estimates hold true
\begin{align*}
{\eta}_{r,K}&\leq \mathtt{c}\,{ \left(\frac{\delta^{max}_{e,K}}{\delta^{min}_{e,K}}\right)^{1/2}}\,|\zeta-\tilde{\zeta}_h|_{+,K}
,\\ 
{{\eta}_{f,K}}&{\leq { \mathtt{C} \left(\frac{\delta^{max}_{o,K}}{\delta^{min}_{o,K}}\right)^{1/2}}\,|\zeta-\tilde{\zeta}_h|_{+,K}}
,
\end{align*}
where {$\mathtt{c}$ and $\mathtt{C}$ are constants which depend} only on the polynomial degree of $S_f$, $\T_o$, $\T_e$  {and $\tilde{\phi}_h$}, $d$, and the shape-regularity parameter $\kappa_K$. \\
{For $F \in \mathcal{F}^e_h\cap \overline{\Gamma_V}$, let ${\eta}_{bc,F}$ be the Robin boundary condition estimator given by~\eqref{eq:def_bc_estimator}. 
The following estimate holds true
\begin{align*}
{{\eta}_{bc,F}}&{\leq  \mathsf{C} \,|\zeta-\tilde{\zeta}_h|_{+,K_F}}
,
\end{align*}
where $h_{\perp F}$ is the size of the $F$-transverse part of the {mesh element $K_F$} containing $F$ in its facets, $\mathsf{C}$ is a constant which depends only on the polynomial degree of $S_f$,  $\T_e$  {and $\tilde{\phi}_h$}, $d$, and the shape-regularity parameter $\kappa_{K_F}$.
}
\end{theorem}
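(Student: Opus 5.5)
The plan is to retrace the proof of Theorem~\ref{thm_apost_error_est} verbatim, after first checking that the error representation of Lemma~\ref{lemma:leandre_reconstruction} carries over to the mixed setting. For $\tilde{\zeta}_h=(\pvec_h,\tilde{\phi}_h)\in\Qududvec{}_{m,h}\times\Vudud_m$ and $\xi=(\qvec,\psi)\in\Xcalmg_m$, we integrate $(\tilde{\phi}_h,\H^T\dive\qvec)_{0,\Omega}$ by parts, using the mixed-boundary integration-by-parts formula recalled from \cite{JCi13,CiJK17}. The boundary term splits over $\Gamma_D$, $\Gamma_N$ and $\Gamma_V$. It vanishes on $\Gamma_D$ because $\tilde{\phi}_h=0$ there, and it vanishes on $\Gamma_N$ because $\qvec\cdot\nvec=0$ there. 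Hence
\[ d(\zeta-\tilde{\zeta}_h,\xi)=(S_f-\H^T\dive\pvec_h-\T_e\tilde{\phi}_h,\psi)_{0,\Omega}-(\T_o\pvec_h+\H\grad\tilde{\phi}_h,\qvec)_{0,\Omega}+(\H\tilde{\phi}_h-\tilde{\Gamma}_e(\pvec_h\cdot\nvec),\qvec\cdot\nvec)_{0,\Gamma_V}. \]
This identity is the only place where the mixed conditions enter.

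The bounds on $\eta_{r,K}$ and $\eta_{f,K}$ then follow exactly as before. We test with $\xi_{r,K}=(0,\psi_K\psi_r)$ and $\xi_{f,K}=(\psi_K\qvec_f,0)$, where $\psi_K$ is the interior bubble. The vector test field vanishes on $\partial K$, so it lies in $\Qvec_m(\Omega)$: the constraint $\qvec\cdot\nvec=0$ on $\Gamma_N$ holds trivially, and no boundary term appears. The estimates \eqref{eq:norm+_eta_f_ineq_5}--\eqref{eq:norm+_eta_f_ineq_7}, together with the definitions of $\|\cdot\|_S$ and $|\cdot|_{+,K}$, give the stated constants.

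For $\eta_{bc,F}$ with $F\in\mathcal{F}^e_h\cap\overline{\Gamma_V}$, we reuse the face bubble $\psi_F$ and the lifting $\qvec_{bc}$ of $e_h=\H\tilde{\phi}_h-\tilde{\Gamma}_e(\pvec_h\cdot\nvec)$. We set $\xi_{bc,F}=(\psi_F\qvec_{bc},0)$ on $K_F$ and $0$ elsewhere. The point to verify, and the main obstacle, is that $\xi_{bc,F}\in\Xcalmg_{m,K_F}$. Since $\psi_F$ vanishes on $\overline{\partial K_F\setminus F}$, the normal trace of $\psi_F\qvec_{bc}$ is supported in $F\subset\overline{\Gamma_V}$. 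In particular, it vanishes on every other boundary facet of $K_F$, including any facet lying on $\Gamma_N$ or $\Gamma_D$, so the constraint defining $\Qvec_m(\Omega)$ is met. We must also assume, or arrange, that the mesh resolves the splitting of $\partial\Omega$, so that each boundary facet lies in exactly one of $\overline{\Gamma_D}$, $\overline{\Gamma_V}$, $\overline{\Gamma_N}$.

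With this in place, the representation gives $d(\zeta-\tilde{\zeta}_h,\xi_{bc,F})=-(\qvec_f,\psi_F\qvec_{bc})_{0,K_F}+(e_h,\psi_Fe_h)_{0,F}$. In $\|\xi_{bc,F}\|_S$, the face sum, now restricted to $\mathcal{F}^e_h\cap\overline{\Gamma_V}$, contains only $F$. We then follow the earlier chain of inequalities unchanged. The bound \eqref{eq:estimation_h_perpF} and the inverse inequalities control $\|\xi_{bc,F}\|_S$ by $(\delta^{max}_{o,K_F}h_{\perp F})^{1/2}\|e_h\|_{0,F}$, and the already proven estimate \eqref{estimate_on_qf} absorbs the $\qvec_f$ term. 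Applying \eqref{eq:norm+_eta_f_ineq_5.1.F} and the bound $\|\tilde{\Gamma}_e^{-1/2}\cdot\|\le((\tilde{\Gamma}_e)_*)^{-1/2}\|\cdot\|$ then yields $\eta_{bc,F}\le\mathsf{C}\,|\zeta-\tilde{\zeta}_h|_{+,K_F}$.
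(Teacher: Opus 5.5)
Your proposal is correct and follows the paper's route: the paper's proof simply declares the argument identical to that of Theorem~\ref{thm_apost_error_est}, and you retrace that argument while checking the two points where the mixed conditions could interfere, namely that the error representation of Lemma~\ref{lemma:leandre_reconstruction} now carries its boundary term only on $\Gamma_V$, and that the bubble-based test fields lie in $\Qvec_m(\Omega)$ because their normal traces vanish outside $F\subset\overline{\Gamma_V}$. Your extra mesh-compatibility assumption is harmless, and it is already implicit in the statement's restriction to facets $F\in\mathcal{F}^e_h\cap\overline{\Gamma_V}$.
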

\begin{proof}
The proof is identical to the proof of Theorem~\ref{thm_apost_error_est}.
\end{proof}

\end{document}